\newtheorem{dfn}{Definition}[section]
\newtheorem{thm}[dfn]{Theorem}
\newtheorem{prop}[dfn]{Proposition}
\newtheorem{lem}[dfn]{Lemma}
\newtheorem{rem}[dfn]{Remark}
\newtheorem{ass}[dfn]{Assumption}
\newtheorem{fact}[dfn]{Fact}
\numberwithin{equation}{section}
\begin{document}

\title{Geometric treatments and a common mechanism in finite-time singularities for autonomous ODEs}

\author{Kaname Matsue\thanks{Institute of Mathematics for Industry, Kyushu University, Fukuoka 819-0395, Japan {\tt kmatsue@imi.kyushu-u.ac.jp}} $^{,}$ \footnote{International Institute for Carbon-Neutral Energy Research (WPI-I$^2$CNER), Kyushu University, Fukuoka 819-0395, Japan}
}
\maketitle

\begin{abstract}
Geometric treatments of blow-up solutions for autonomous ordinary differential equations and their blow-up rates are concerned.
Our approach focuses on the type of invariant sets at infinity via compactifications of phase spaces, and dynamics on their center-stable manifolds. 
In particular, we show that dynamics on center-stable manifolds of invariant sets at infinity with appropriate time-scale desingularizations as well as blowing-up of singularities characterize dynamics of blow-up solutions as well as their rigorous blow-up rates.
\end{abstract}

{\bf Keywords:} blow-up solutions, compactifications, desingularization, center manifolds, extinction, compacton traveling wave, quenching.
\par
\bigskip
{\bf AMS subject classifications : } 34A26, 34C08,  35B44, 35L67, 58K55

\section{Introduction}
This paper aims at geometric treatments of blow-up solutions for differential equations as a sequel of the author's previous article \cite{Mat}.
In \cite{Mat}, the author characterizes blow-up solutions for a class of vector fields called {\em asymptotically quasi-homogeneous vector fields at infinity} in a simple case from the geometric viewpoint.
More precisely, {\em hyperbolic} equilibria and periodic orbits at infinity for such vector fields at infinity on compactified phase spaces with appropriate time-scale desingularizations induce blow-up solutions in terms of their stable manifolds.
In particular, trajectories on stable manifolds of such hyperbolic invariant sets for desingularized vector fields correspond to blow-up solutions for original vector fields. 
Moreover, their blow-up rates are uniquely characterized by the nonlinearity of vector fields.
The blow-up rates are often called of {\em type-I} in the field of (partial) differential equations.
These results answer, in a simple case, the fundamental question of blow-up problems for differential equations; {\em whether or not a solution blows up} and, if so, {\em when, where and how the solution blows up}.

\par
On the other hand, there are very rich variations of blow-up solutions and their behavior. 
For example, in the field of partial differential equations, it is well-known (e.g., \cite{FHV2000}) that the Cauchy problem for a semilinear heat equation
\begin{equation}
\label{heat-intro-PDE}
\begin{cases}
u_t = \Delta u + |u|^{p-1}u & \text{ in $\mathbb{R}^N\times (0,T)$,}\\
u(x,0) = u_0(x) \geq 0 & \text{ in $\mathbb{R}^N$}
\end{cases}
\end{equation}
with $T>0$ and $u_0\in L^\infty(\mathbb{R}^N)$ possess a blow-up solution satisfying
\begin{equation}
\label{type-2}
\lim_{t\to \infty} (T-t)^{1/(p-1)}\|u(t,\cdot)\|_{\infty} = \infty
\end{equation}
under appropriate choice of $p$ and $N$.
Blow-up solutions for (\ref{heat-intro-PDE}) satisfying (\ref{type-2}) are often called {\em type-II blow-up solutions}.
The terminology \lq\lq type-II" are used in other systems such as the Keller-Segel system (e.g., \cite{M2016}).
Type-II blow-up solutions, and possibly other type blow-ups, are understood in terms of their blow-up rates, which can contain richer information than nonlinearity of vector fields, as shown in the above example.
One of our aims here is to understand such blow-up rates from viewpoints of geometry and dynamical systems.
As mentioned above, the author's preceding work shows that hyperbolic invariant sets at infinity characterize type-I blow-up solutions, which indicates that a key point for characterizing blow-up solutions with blow-up rates other than type-I will be {\em non-hyperbolic invariant sets at infinity}.
In general non-hyperbolic invariant sets has a variety of structures depending on dynamical systems and hence concrete analysis can be done individually.
Nevertheless, we expect that explicit asymptotic behavior of trajectories on {\em center manifolds of invariant sets at infinity}\footnote
{
Precise meanings are shown in successive sections.
} will provide divergent or blow-up solutions with precise asymptotic behavior such as blow-up rates as the preceding study \cite{Mat}.
\par
\bigskip
Throughout successive sections, consider the (autonomous) vector field
\begin{equation}
\label{ODE-original}
y' = f(y),
\end{equation}
where $f : U \to \mathbb{R}^n$ be a smooth asymptotically quasi-homogeneous function with an admissible domain $U\subset \mathbb{R}^n$ with respect to type $\alpha\in \mathbb{N}^n$ and order $k+1 > 1$ (Definitions \ref{dfn-index} and \ref{dfn-AQH}).
Our basic approach is briefly written as follows.
\begin{enumerate}
\item Apply {\em compactifications} of phase spaces and derivation of {\em desingularized vector fields} to (\ref{ODE-original}).
The infinity then corresponds to a hypersurface $\mathcal{E}$ called {\em horizon}.
\item Specify an invariant set $S$ on $\mathcal{E}$ for desingularized vector fields.
\item Solve explicit solutions which converge to $S$.
\item Transform the calculated solutions to those for the original system (\ref{ODE-original}).
\end{enumerate}
Our main result is the following : {\em if $S$ is non-hyperbolic for desingularized vector field in the sense stated above and we solve trajectories on $W^c(S)$, then they correspond to divergent or blow-up solutions with blow-up rates which are generally different from type-I}.
We can also say that the similar feature is revealed to other finite-time singularities such as {\em finite-time extinction}, {\em compacton traveling waves} or {\em quenching}.
\par
\bigskip
The rest of the paper is organized as follows.
In Section \ref{section-type-I}, we review the approach and result shown in \cite{Mat, MT2017} about characterization of blow-up solutions in terms of trajectories on stable manifolds of invariant sets at infinity for appropriately associated vector fields.
These results gives characterization of {\em type-I blow-ups}.
In Section \ref{section-mechanism}, we discuss a methodology how the asymptotic behavior of blow-up solutions with different blow-up rates from type-I can be derived.
The main issue there is a treatment of {\em non-hyperbolic} invariant sets at infinity.
We thus review the type of equilibria, which is often discussed in singularities of (planar) vector fields (e.g., \cite{DLA2006}).
Several equivalences among dynamical systems are reviewed, which help us with treating asymptotic behavior of trajectories simply.
Independently, we discuss the treatment for detecting other types of finite-time singularities such as {\em finite-time extinctions} and {\em compactons}.
As indicated in preceding studies in partial differential equations (e.g., \cite{K1992}), there are several common aspects among finite-time singularities including blow-ups, extinctions, compactons and quenching.
Our present treatments reveals such common mechanisms of singularities from the viewpoint of dynamical systems.
In Section \ref{section-example}, we provide various examples with calculating rigorous rates of finite-time singularities including blow-ups, extinctions, compactons, quenching and periodic blow-ups.
We see that the asymptotic rates can be derived component-wise whether or not it is faster than that associated with nonlinearity of vector fields; namely type-I rates.

\section{Compactifications and type-I blow-ups}
\label{section-type-I}

First we collect our present settings and results for detecting blow-up solutions with blow-up rates.
These notions and statements are based on \cite{Mat}.

\begin{dfn}[Homogeneity index and admissible domain]\rm
\label{dfn-index}
Let $\alpha = (\alpha_1,\cdots, \alpha_n)$ be a set of nonnegative integers.
Let the index set $I_\alpha$ as $I_\alpha=\{i\in \{1,\cdots, n\}\mid \alpha_i > 0\}$, which we shall call {\em the set of homogeneity indices associated with $\alpha = (\alpha_1,\cdots, \alpha_n)$}.
Let $U\subset \mathbb{R}^n$.
We say the domain $U\subset \mathbb{R}^n$ {\em admissible with respect to the sequence $\alpha$}
if
\begin{equation*}
U = \{x=(x_1,\cdots, x_n)\in \mathbb{R}^n \mid x_i\in \mathbb{R}\text{ if }i\in I_\alpha,\ (x_{j_1},\cdots, x_{j_{n-l}}) \in \tilde U\},
\end{equation*} 
where $\{j_1, \cdots, j_{n-l}\} = \{1,\cdots, n\}\setminus I_\alpha$ and $\tilde U$ is an $(n-l)$-dimensional open set.
\end{dfn}

\begin{dfn}[Asymptotically quasi-homogeneous vector fields]\rm
\label{dfn-AQH}
Let $f = (f_1,\cdots, f_n):U \to \mathbb{R}^n$ be a smooth function with an admissible domain $U\subset \mathbb{R}^n$ with respect to $\alpha$ such that $f$ is uniformly bounded for each $x_i$ with $i\in I_\alpha$, where $I_\alpha$ is the set of homogeneity indices associated with $\alpha$.
We say that $X = \sum_{j=1}^n f_j(x)\frac{\partial }{\partial x_j}$, or simply $f$ is an {\em asymptotically quasi-homogeneous vector field of type $(\alpha_1,\cdots, \alpha_n)$ and order $k+1$ at infinity} if
\begin{equation*}
\lim_{R\to +\infty} R^{-(k+\alpha_j)}\left\{ f_j(R^{\alpha_1}x_1, \cdots, R^{\alpha_n}x_n) - R^{k+\alpha_j}(f_{\alpha,k})_j(x_1, \cdots, x_n) \right\}
 = 0
 \end{equation*}
holds uniformly for $(x_1,\cdots, x_n)\in U_1$, where $f_{\alpha,k} = ((f_{\alpha,k})_1,\cdots, (f_{\alpha,k})_n)$ is a quasi-homogeneous vector field of type $(\alpha_1,\cdots, \alpha_n)$ and order $k+1$, and
\begin{equation*}
U_1 = \{x=(x_1,\cdots, x_n)\in \mathbb{R}^n \mid (x_{i_1}, \cdots, x_{i_l})\in S^{l-1},\ (x_{j_1},\cdots, x_{j_{n-l}}) \in \tilde U\},
\end{equation*}
where $\{i_1,\cdots, i_l\} = I_\alpha$.
\end{dfn}

Throughout the rest of this section, we consider (\ref{ODE-original}) such that $f$ is asymptotically quasi-homogeneous of type $\alpha\in \mathbb{N}^n_0$, where $\mathbb{N} = \mathbb{N}\cup \{0\}$, and order $k+1$ with $k\geq 1$.

\subsection{Compactifications associated with given types}

Quasi-homogeneous compactifications in our present arguments are restricted to the following.
\begin{dfn}[Directional compactifications]\rm
An (orthogonal) {\em directional compactification of type $\alpha$} is defined as
\begin{equation}
\label{cpt-dir-def}
y_i = \frac{1}{r^{\alpha_i}},\quad y_j = \frac{x_j}{r^{\alpha_j}}\ (j\not = i).
\end{equation}
\par
If our system is two dimensional and the type of $f$ is $(1, m)$ with $m \geq 1$, then {\em quasi-polar compactification} of type $(1, m)$ is defined as
\begin{equation}
\label{cpt-qpolar-def}
y_1 = \frac{{\rm Cs}\theta}{r},\quad y_2 = \frac{{\rm Sn}\theta}{r^m}
\end{equation}
A general form is stated in \cite{Mat}.
\end{dfn}
In this terminology \lq\lq the infinity" corresponds to a hypersurface $\mathcal{E}=\{r=0\}$, which we shall call {\em the horizon}.
Although we only use these compactifications in our arguments below, the essential idea can be applied with other type of compactifications (cf. \cite{Mat, MT2017}).
\par
If we apply the compactification (\ref{cpt-dir-def}) and time-scale desingularization
\begin{equation}
\frac{d\tau}{dt} = r(t)^{-k}
\end{equation}
to (\ref{ODE-original}), we obtain the following {\em desingularized vector field}
\begin{equation}
\label{ODE-desing-dir}
\begin{pmatrix}
\frac{dr}{d\tau_d} \\ \frac{dx_2}{d\tau_d} \\ \vdots \\ \frac{dx_n}{d\tau_d}
\end{pmatrix}
=
\begin{pmatrix}
-r & 0 & \cdots & 0 \\
0 & 1 & \cdots & 0\\
\vdots & \vdots & \ddots & \vdots \\
0 & 0 & \cdots & 1
\end{pmatrix}
\begin{pmatrix}
\alpha_1  & 0 & \cdots & 0 \\
\alpha_2 x_2 & 1 & \cdots & 0 \\
\vdots & \cdots & \cdots & \vdots\\
\alpha_n x_n & 0 & \cdots & 1
\end{pmatrix}^{-1}
\begin{pmatrix}
\hat f_1 \\ \hat f_2 \\ \vdots \\ \hat f_n
\end{pmatrix}\equiv g(r,x_2,\cdots, x_n),
\end{equation}
where 
\begin{equation*}
\hat f_j(r, x_2, \cdots, x_n) := r^{k+\alpha_j} f_j(r^{-\alpha_1}, r^{-\alpha_2}x_2, \cdots, r^{-\alpha_n}x_n),\quad j=1,\cdots, n
\end{equation*}
and we assume that, without the loss of generality, the index $i$ in (\ref{cpt-dir-def}) is $1$.
In particular, divergent solutions of (\ref{ODE-original}) corresponds to global trajectories for (\ref{ODE-desing-dir}) asymptotic to $\mathcal{E}$ (e.g., \cite{EG2006, Mat}).
Now we are ready to study blow-up solutions in terms of dynamics at infinity.
\begin{rem}[Remainder of Landau's symbols]
Let $a\in \mathbb{R}$ or $\{+\infty\}$.
For real-valued functions $f,g$, 
\begin{itemize}
\item \lq\lq $f(x) \in \Theta(g(x))$ as $x\to a-0$" means that, there exist $\delta$ and positive constants $k_1, k_2$ such that 
\begin{equation*}
\forall x<a \text{ with }|x-a| < \delta \quad \Rightarrow \quad k_1 g(x) \leq f(x) \leq k_2 g(x).
\end{equation*}
\item \lq\lq $f(x) = o(g(x))$ as $x\to a$" means 
\begin{equation*}
\lim_{x\to a}\left| \frac{f(x)}{g(x)}\right| = 0.
\end{equation*}
\item \lq\lq $f(x) \sim g(x)$ as $x\to a$" means that $f(x)-g(x) = o(g(x))$ as $x\to a$, equivalently
\begin{equation*}
\lim_{x\to a}\left| \frac{f(x)}{g(x)}\right| = 1.
\end{equation*}
\end{itemize}
\end{rem}

\subsection{Type-I blow-up}
In \cite{Mat} a criterion of ordinary blow-up has been stated as follows, which describes the correspondence between \lq\lq type-I blow-up" and hyperbolicity of invariant sets on $\mathcal{E}$ for (\ref{ODE-desing-dir}).

\begin{prop}[Stationary (type-I) blow-up, \cite{Mat}]
\label{prop-stationary-blowup}
Assume that (\ref{ODE-original}) has an equilibrium at infinity in the direction $x_\ast$.
Further suppose that $x_\ast$ is hyperbolic with $n_s > 1$ (resp. $n_u = n-n_s$) eigenvalues of $Jg(x_\ast)$ with negative (resp. positive) real parts.
If the solution $y(t)$ of (\ref{ODE-original}) with initial data $y(0) = y_0\in \mathbb{R}^n$ whose image $(r,x) = T(y)$ is on $W^s(x_\ast; g)$ for $g$, then $t_{\max} < \infty$ holds; namely, $y(t)$ is a blow-up solution\footnote{
The assumption of our statements implies that the global trajectory $(r,x) = T(y)$ for $g$ is {\em not} on the horizon $\mathcal{E}$.
There is a possibility that the initial data $x_0$ for $g$ is on the horizon $\mathcal{E}$, but this situation is out of focus in the proposition.
The similar caution can be said to Proposition \ref{prop-periodic-blowup}.
}.
Moreover, for a generic constant $C$, 
\begin{equation*}
r(t)^{-1} \sim C(t_{\max} - t)^{-1/k}\quad \text{ as }\quad t\to t_{\max}.
\end{equation*}
Finally, if the $i$-th component $(x_\ast)_i$ of $x_\ast$ with $i\in I_\alpha$ is not zero, then we also have
\begin{equation*}
y_i(t) \sim ({\rm sgn}x_\ast )C(t_{\max} - t)^{-\alpha_i /k}\quad \text{ as }\quad t\to t_{\max}.
\end{equation*}
In particular, $y(t)$ is an ordinary blow-up solutions.
\end{prop}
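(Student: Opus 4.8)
The plan is to work throughout in the desingularized chart of (\ref{ODE-desing-dir}) with time variable $\tau$ (the one with $d\tau/dt = r^{-k}$), establish the asymptotics of $r(\tau)$ as $\tau\to+\infty$, and only at the end pull everything back to the original time $t$. By hypothesis the image $(r(\tau),x(\tau)) = T(y(\cdot))$ lies on $W^s(x_\ast;g)$ and, per the footnote, not on the invariant hypersurface $\mathcal{E}=\{r=0\}$; hence $r(\tau)>0$ on the whole (forward-complete) existence interval $\tau\in[0,+\infty)$, while $(r(\tau),x(\tau))\to(0,x_\ast)$ as $\tau\to+\infty$. The structural observation driving everything is that the first component of $g$ factors: reading off the first row of (\ref{ODE-desing-dir}) — the first row of the inverse matrix there being $(\alpha_1^{-1},0,\ldots,0)$ — one gets $dr/d\tau = -r\,\phi(r,x)$ with $\phi(r,x):=\alpha_1^{-1}\hat f_1(r,x)$, and by Definition \ref{dfn-AQH} the limit $c:=\lim_{(r,x)\to(0,x_\ast)}\phi(r,x)$ exists, is finite, and equals $\alpha_1^{-1}$ times the first component of $f_{\alpha,k}$ evaluated at the direction $x_\ast$.

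Next I would fix the sign of $c$. Because of the overall factor $r$ in $dr/d\tau=-r\phi$, every $x$-derivative of the first component of $g$ vanishes at $(0,x_\ast)$, so $\partial_r$ is an eigenvector of $Jg(x_\ast)$ with eigenvalue $-c$ and the remaining $n-1$ eigenvalues are exactly those of $Jg(x_\ast)$ restricted to $T_{x_\ast}\mathcal{E}$. Hyperbolicity of $x_\ast$ already forces $c\neq 0$, and since the given trajectory sits on $W^s(x_\ast;g)$ with $r>0$ the stable manifold is not contained in $\mathcal{E}$; therefore the transverse eigendirection $\partial_r$ must be stable, i.e. $c>0$.

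The asymptotics of $r$ now come from integrating the scalar equation. Writing $r(\tau)=r(0)\exp\!\left(-\int_0^\tau \phi(r(\sigma),x(\sigma))\,d\sigma\right)$ and invoking the standard fact that on the stable manifold of a hyperbolic equilibrium the convergence $(r(\tau),x(\tau))\to(0,x_\ast)$ is exponential, one gets $\phi(r(\sigma),x(\sigma))-c = O(e^{-\mu\sigma})$ for some $\mu>0$, so $\int_0^\infty(\phi-c)\,d\sigma$ converges to a finite value and hence $r(\tau)\sim C_0 e^{-c\tau}$ as $\tau\to+\infty$, with $C_0>0$ — finite and strictly positive precisely because $r(0)>0$. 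Transferring to $t$ via $dt/d\tau=r(\tau)^k\sim C_0^k e^{-ck\tau}$ and using $ck>0$: the limit $t_{\max}=t(0)+\int_0^\infty r(\sigma)^k\,d\sigma$ is finite, so $y$ is a blow-up solution; moreover $t_{\max}-t(\tau)=\int_\tau^\infty r(\sigma)^k\,d\sigma\sim (ck)^{-1}C_0^k e^{-ck\tau}\sim (ck)^{-1}r(\tau)^k$, which rearranges to $r(t)^{-1}\sim (ck)^{-1/k}(t_{\max}-t)^{-1/k}$, the claimed rate with $C=(ck)^{-1/k}$.

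For the components I would just feed this into the chart relations $y_1=r^{-\alpha_1}$, $y_j=x_j r^{-\alpha_j}$: when $i\in I_\alpha$ and $(x_\ast)_i\neq 0$ we have $x_i(t)\to(x_\ast)_i$ while $r(t)^{-\alpha_i}\sim (ck)^{-\alpha_i/k}(t_{\max}-t)^{-\alpha_i/k}\to+\infty$, so $y_i(t)\sim (x_\ast)_i (ck)^{-\alpha_i/k}(t_{\max}-t)^{-\alpha_i/k}$, which has exactly the asserted form (sign $\sgn(x_\ast)_i$, constant $C=|(x_\ast)_i|(ck)^{-\alpha_i/k}$); since in particular $y_1(t)=r(t)^{-\alpha_1}\to+\infty$, the solution is an ordinary (unbounded) blow-up. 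The step demanding the most care is the third one: one must check that $\tau$ genuinely exhausts $[0,+\infty)$ and control $\int(\phi-c)$ sharply enough to upgrade a crude $O(e^{-c\tau})$ bound to the precise equivalence $r(\tau)\sim C_0 e^{-c\tau}$ — but this is exactly what the multiplicative form $dr/d\tau=-r\phi$ delivers, so beyond it the argument is bookkeeping with the explicit $g$ and the quasi-homogeneous scaling.
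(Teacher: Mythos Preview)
The paper does not supply a proof of this proposition; it is quoted verbatim from the author's earlier article \cite{Mat} and is used here only as background for the type-I case. So there is no ``paper's own proof'' to compare against.

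Your argument is essentially correct and is in fact the natural one: isolate the factorisation $dr/d\tau=-r\,\phi(r,x)$ coming from the first row of (\ref{ODE-desing-dir}), identify the limit $c=\phi(0,x_\ast)>0$, use exponential convergence on $W^s(x_\ast)$ to turn $\phi\to c$ into the sharp asymptotic $r(\tau)\sim C_0e^{-c\tau}$, and finish by integrating $dt/d\tau=r^k$. One small imprecision: from $\partial_{x_j}g_1|_{(0,x_\ast)}=0$ you get that the \emph{first row} of $Jg(x_\ast)$ is $(-c,0,\dots,0)$, i.e.\ $Jg(x_\ast)$ is block lower-triangular with respect to the splitting $\mathbb{R}\partial_r\oplus T_{x_\ast}\mathcal{E}$. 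That makes $-c$ an eigenvalue and forces the generalised eigenspaces for the remaining $n-1$ eigenvalues to lie inside $T_{x_\ast}\mathcal{E}$, but it does \emph{not} make $\partial_r$ itself a (right) eigenvector. This does not affect your conclusion: if $-c$ were unstable, the whole stable eigenspace would sit in $T_{x_\ast}\mathcal{E}$, and since $\mathcal{E}$ is invariant the (unique) stable manifold would be contained in $\mathcal{E}$, contradicting $r(0)>0$. With that wording fixed, the proof goes through.
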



The characterization of blow-up solutions from the viewpoint of dynamical systems yields further characterization of blow-up solutions corresponding to trajectories for desingularized fields asymptotic to {\em nontrivial invariant sets} (on the horizon) such as periodic orbits, which is shown in \cite{Mat} as follows.

\begin{prop}[Periodic (type-I) blow-up, \cite{Mat}]
\label{prop-periodic-blowup}
Suppose that $g$ admits a periodic orbit $\gamma_\ast = \{x_{\gamma_\ast}(\tau)\} \subset \mathcal{E}$, with period $T_\ast > 0$, characterized by a fixed point of the Poincar\'{e} map $P:\Delta \cap \overline{\mathcal{D}}\to \Delta \cap \overline{\mathcal{D}}$.
Let $x_\ast\in \Delta\cap \gamma_\ast$; namely, $P(x_\ast)=x_\ast$.
We further assume that all eigenvalues of Jacobian matrix $JP(x_\ast)$ have moduli away from $1$
(namely, $\gamma_\ast$ is hyperbolic), at least one of which has the modulus less than $1$.
\par
If the solution $y(t)$ of (\ref{ODE-original}) whose image $(r,x) = T(y)$ is on $W^s(\gamma_\ast; g)$ for $g$, then $t_{\max} < \infty$ holds; namely, $y(t)$ is a blow-up solution.
Moreover, for a generic constant $C$, 
\begin{equation*}
r(t)^{-1} \sim C(t_{\max} - t)^{-1/k}\quad \text{ as }\quad t\to t_{\max}.
\end{equation*}
Finally, if the $i$-th component $(x_\ast)_i$ of $x_\ast$ with $i\in I_\alpha$ is not zero, then we also have
\begin{equation*}
y_i(t) \sim C(t_{\max} - t)^{-\alpha_i /k}x_i(-c'\ln (t_{\max} - t))\quad \text{ as }\quad t\to t_{\max}
\end{equation*}
for some constants $C\in \mathbb{R}$ and $c' > 0$.
\end{prop}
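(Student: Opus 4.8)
The plan is to run the argument behind Proposition~\ref{prop-stationary-blowup} along the periodic orbit $\gamma_\ast$, with the constant exponential decay rate of the stationary case replaced by one that is only \emph{asymptotically} $T_\ast$-periodic. First I would pass to the image trajectory $(r(\tau),x(\tau)) = T(y(t(\tau)))$ of the desingularized field (\ref{ODE-desing-dir}) in the $\tau$-time: by hypothesis it lies on $W^s(\gamma_\ast;g)$ and, by the footnote, off the horizon $\mathcal{E}=\{r=0\}$, so $r(\tau)>0$ for all $\tau$ and $(r(\tau),x(\tau))\to\gamma_\ast$ as $\tau\to+\infty$, the convergence being exponential by hyperbolicity of $\gamma_\ast$ (local stable-manifold / Floquet theory); after fixing a phase, $r(\tau)\to 0$ and $\|x(\tau)-x_{\gamma_\ast}(\tau)\|\to 0$ exponentially. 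The first component of $g$ reads $\frac{dr}{d\tau}=-\frac{1}{\alpha_1}\hat f_1(r,x)\,r$, and $\hat f_1(0,x)=(f_{\alpha,k})_1(1,x_2,\dots,x_n)$, so writing $a(\tau):=\frac{1}{\alpha_1}\hat f_1(r(\tau),x(\tau))=a_\gamma(\tau)+\eta(\tau)$ with $a_\gamma(\tau):=\frac{1}{\alpha_1}(f_{\alpha,k})_1(1,x_{\gamma_\ast}(\tau))$ a $T_\ast$-periodic function of mean $\bar a$ and $\eta\in L^1([\tau_0,\infty))$ (exponential decay from the previous sentence), everything reduces to the scalar equation $\frac{dr}{d\tau}=-a(\tau)r$.

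Next I would establish $\bar a>0$ and the finiteness of $t_{\max}$. The variational equation of the $r$-component along $\gamma_\ast$ is $\delta r'=-a_\gamma(\tau)\delta r$, whose monodromy $e^{-\bar aT_\ast}$ is a Floquet multiplier of $\gamma_\ast$; hyperbolicity forces $\bar a\neq 0$, and since our trajectory approaches $\gamma_\ast$ with $r\downarrow 0$ it must be the stable direction, i.e.\ $\bar a>0$. Integrating, $r(\tau)=r(\tau_0)\exp(-\int_{\tau_0}^\tau a)$; splitting $\int_{\tau_0}^\tau a_\gamma=\bar a(\tau-\tau_0)+\Psi(\tau)$ with $\Psi$ bounded (the antiderivative of a mean-zero periodic function) while $\int_{\tau_0}^\tau\eta$ converges, one gets $r(\tau)=e^{-\bar a\tau}b(\tau)$ with $b$ bounded between two positive constants for $\tau$ large. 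Then $\frac{dt}{d\tau}=r^k$ gives $t_{\max}=t(\tau_0)+\int_{\tau_0}^\infty r(s)^k\,ds<\infty$, so $y$ is a blow-up solution.

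For the rates I would factor $r(\tau)^k$ out of $t_{\max}-t(\tau)=\int_\tau^\infty r(s)^k\,ds$, obtaining $t_{\max}-t(\tau)=r(\tau)^k\Phi(\tau)$ with $\Phi(\tau)=\int_0^\infty e^{-k\bar a\sigma}(b(\tau+\sigma)/b(\tau))^k\,d\sigma$ bounded between positive constants and asymptotic to a $T_\ast$-periodic function; this is precisely the statement $r(t)^{-1}\sim C(t_{\max}-t)^{-1/k}$, the constant $C$ being the bounded asymptotically-periodic factor $\Phi(\tau)^{-1/k}$ (a genuine constant exactly when $a_\gamma$ is constant along $\gamma_\ast$, recovering the stationary case). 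Combining $r(\tau)^k\asymp e^{-k\bar a\tau}$ with $r(\tau)^k\asymp t_{\max}-t$ inverts the time change as $\tau=\frac{1}{k\bar a}\ln\frac{1}{t_{\max}-t}+O(1)$, so one may take $c'=1/(k\bar a)>0$. Finally, for $i\in I_\alpha$ with $(x_\ast)_i\neq 0$ I would use $y_i(t)=x_i(\tau)r(\tau)^{-\alpha_i}$: the first half of this step gives $r(\tau)^{-\alpha_i}\sim C(t_{\max}-t)^{-\alpha_i/k}$, while $x_i(\tau)=x_{\gamma_\ast,i}(\tau)+o(1)=x_{\gamma_\ast,i}\!\left(-c'\ln(t_{\max}-t)+O(1)\right)+o(1)$ by the inverted time change, yielding the asserted formula with $x_i$ the $i$-th component of the periodic solution $x_{\gamma_\ast}$.

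I expect the main obstacle to be the book-keeping forced by $a(\tau)$ being only asymptotically periodic: separating the oscillatory part $\int a_\gamma$ (handled via its bounded antiderivative) from the decaying part $\eta$ (handled via $L^1$-integrability, which is exactly where the exponential approach to the hyperbolic orbit enters), and — more delicately — accepting that the relations $r\leftrightarrow(t_{\max}-t)$ and $\tau\leftrightarrow(t_{\max}-t)$ are exact only up to bounded asymptotically-periodic factors and $O(1)$ phase shifts. This is precisely why the clean constant of the stationary statement is replaced here by the oscillating factor $x_i(-c'\ln(t_{\max}-t))$; making the phase-matching $x(\tau)\approx x_{\gamma_\ast}(\tau+\text{phase})$ and the $O(1)$ ambiguity in the $\tau\leftrightarrow t$ correspondence rigorous is the one genuinely technical point, whereas the only structural input is the positivity $\bar a>0$ coming from hyperbolicity together with the geometry of $W^s(\gamma_\ast)$.
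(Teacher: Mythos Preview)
The paper does not actually prove Proposition~\ref{prop-periodic-blowup}: it is imported verbatim from \cite{Mat} as a known result, with no argument given in the present text. So there is no ``paper's own proof'' to compare against; your proposal has to be judged on its own.

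On its merits your outline is sound and follows the natural extension of the stationary argument. The key structural points are all present and correct: (i) the $r$-equation of the desingularized field really is $dr/d\tau=-\alpha_1^{-1}\hat f_1(r,x)\,r$, and $\hat f_1(0,x)=(f_{\alpha,k})_1(1,x_2,\dots,x_n)$ by asymptotic quasi-homogeneity; (ii) invariance of $\{r=0\}$ makes the $r$-direction a genuine Floquet eigendirection of $\gamma_\ast$ with multiplier $e^{-\bar aT_\ast}$, and the existence of a trajectory on $W^s(\gamma_\ast)$ with $r>0$ forces that multiplier to be contracting, hence $\bar a>0$; (iii) your $L^1$/bounded splitting of $\int a$ gives $r(\tau)=e^{-\bar a\tau}b(\tau)$ with $b$ bounded above and below, from which $t_{\max}<\infty$ and the $r\leftrightarrow(t_{\max}-t)$ and $\tau\leftrightarrow(t_{\max}-t)$ correspondences follow exactly as you wrote. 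Your honest remark that the ``constant'' $C$ in $r^{-1}\sim C(t_{\max}-t)^{-1/k}$ is really a bounded asymptotically-periodic factor is the right reading of the statement; the paper itself blurs this (note that in Section~\ref{section-example} the periodic blow-up conclusions are phrased with $\Theta(\cdot)$ rather than $\sim$, which is consistent with your observation).

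The one place I would tighten is the phase-matching step: you need that the asymptotic phase of $x(\tau)$ along $\gamma_\ast$ is well defined (hyperbolicity gives this via the Floquet/stable-manifold normal form), so that $x_i(\tau)=x_{\gamma_\ast,i}(\tau+\theta_0)+o(1)$ for a \emph{fixed} $\theta_0$, and then absorb $\theta_0$ together with the $O(1)$ error from $\tau=-c'\ln(t_{\max}-t)+O(1)$ into the unspecified phase of the periodic function $x_i(\cdot)$. As written you have two separate $O(1)$ shifts floating; make explicit that both are bounded and can be absorbed into the argument of a $T_\ast$-periodic function, which is precisely what the displayed conclusion allows.
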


\begin{rem}\rm
We shall say that the blow-up is of {\em type-I} if the order of (\ref{ODE-original}) uniquely determines the asymptotic behavior of solution near blow-up. 
More precisely, $y_i(t) \sim C(t_{\max}-t)^{-\alpha_i/k}$ holds for some $C\in \mathbb{R}\setminus \{0\}$ as $t\to t_{\max}$, for all $i\in I_\alpha$ as long as $(x_\ast)_i \not = 0$.
This is compatible with cases for PDEs.
For example, it is well-known that, if we consider the initial value problem
\begin{equation}
\begin{cases}
u_t = u_{xx} + u^p, & (t,x)\in (0,T)\times \mathbb{R},\\
u(0,x) = u_0(x), & x\in \mathbb{R}
\end{cases}
\end{equation}
for some $p>1$, then for large initial data the solution $u(t,x)$ blows up with $\|u(t,\cdot)\|_{L^\infty} \sim C(t_{\max} - t)^{-1/(p-1)}$ as $t \to t_{\max}$.
Our statement corresponds to $k = p-1$.
\end{rem}






\section{A universal mechanism of blow-up and extinction in terms of center-(un)stable manifolds}
\label{section-mechanism}

The preceding results for blow-up solutions show that {\em hyperbolicity} of invariant sets induces {\em type-I} blow-up solutions, which are characterized by trajectories on the stable manifolds of hyperbolic equilibria or periodic orbits for desingularized vector fields.
On the other hand, if a given invariant set $S\subset \mathcal{E}$ such as an equilibrium or a periodic orbit is not hyperbolic, there are remaining problems whether trajectories which are on $W^{c}(S)$, namely on {\em center} manifolds, correspond to blow-up solutions for the original system.
Even if solutions turn out to be blow-up solutions, there are still nontrivial problems what rates of blow-ups are.
In such cases, linearization theory is not sufficient to analyze the asymptotic behavior since trajectories for desingularized vector fields are assumed to be on center manifolds.
On the other hand, the asymptotic behavior of trajectories on center manifolds will play key roles in geometric analysis of blow-up solutions beyond type-I blow-ups.
\par
In what follows, we consider stationary divergent solutions on {\em two-dimensional vector fields}\footnote{
We can generalize the following arguments to the higher dimensional cases, but we omit the detail because our applications in the present paper essentially treat two-dimensional problems.
}.
Assume that $x_\ast\in \mathcal{E}$ be an equilibrium for the desingularized vector field associated with (\ref{ODE-original}).
Our focus is the following type of equilibria.
\begin{dfn}[e.g., \cite{DLA2006}]\rm
Let $\dot x = g(x)$ be a smooth vector field and $\bar x$ be an equilibrium.
The point $\bar x$ is said to be (i) {\em hyperbolic} if the two eigenvalues of $Dg(\bar x)$ are away from the imaginary axis, (ii) {\em semi- (or partially) hyperbolic} if exactly one eigenvalue of $Dg(\bar x)$ is equal to zero. 
Hyperbolic or semi-hyperbolic equilibria are called of {\em elementary type}.
On the other hand, $\bar x$ is said to be (iii) {\em nilpotent} if both eigenvalues of $Dg(\bar x)$ are zero, but $Dg(\bar x)\not \equiv 0$.
Finally, it is said to be (iv) {\em linearly zero} if $Dg(\bar x)\equiv 0$.
\end{dfn}
There are several other type of equilibria called {\em center} or {\em focus} type, but we do not refer to these types here.

\subsection{Linearly zero type}
If the origin of a vector field $x'=g(x)$ is a linearly zero equilibrium, then the vector field has the following form:
\begin{equation*}
\begin{cases}
x_1' = x_1^{m_1}x_2^{n_1} \tilde g_1(x), & \\
x_2' = x_1^{m_2}x_2^{n_2} \tilde g_2(x), & 
\end{cases},\quad {}' = \frac{d}{dt},
\end{equation*}
where $\tilde g_1$ and $\tilde g_2$ are smooth functions.
Then the time-variable transform
\begin{equation*}
\frac{d\tau}{dt} = x_1^{\tilde m} x_2^{\tilde n},\quad \tilde m = \min\{m_1, m_2\},\ \tilde n = \min\{n_1, n_2\}
\end{equation*}
gives the new time-scale vector field
\begin{equation*}
\begin{cases}
\dot x_1 = x_1^{m_1-\tilde m}x_2^{n_1-\tilde n} \tilde g_1(x), & \\
\dot x_2 = x_1^{m_2-\tilde m}x_2^{n_2-\tilde n} \tilde g_2(x), &
\end{cases} ,\quad \dot {} = \frac{d}{d\tau}
\end{equation*}
possesses the origin as either being of hyperbolic, {\em semi-hyperbolic} or {\em nilpotent} type singularity mentioned below.

\subsection{Semi-hyperbolic type}
\label{section-semi-hyp}
Assume that the desingularized vector field of (\ref{ODE-original}) near an equilibrium on the horizon $x_\ast\in \mathcal{E}$ has the following form (cf. \cite{DLA2006}):
\begin{equation}
\label{prob-semi}
\begin{cases}
\dot x_1 = -x_1G(x_1,x_2) &\\
\dot x_2 = \lambda x_2 + F(x_1,x_2) &
\end{cases},\quad \dot{} = \frac{d}{d\tau},
\end{equation}
with $\lambda \in \mathbb{R}\setminus \{0\}$, where $x_\ast = (0,0)$ in the corresponding coordinate $(x_1,x_2)$, $G(0,0) = 0$, and $k+1$ is the order of the asymptotically quasi-homogeneous vector field $f$.
In this case, the {\em center manifold theorem} (e.g., \cite{C1981, CAA1980}) shows that (\ref{prob-semi}) possess a (local) center manifold $W^c(0,0) = \{x_2 = h(x_1)\}$, $|x_1| < \delta$, on which the dynamics is governed by the one-dimensional vector field
\begin{equation*}
\dot x_1 = -x_1 G(x_1, h(x_1)),
\end{equation*}
such that $h(0) = (dh/dx_1)(0) = 0$.
In particular, $h(x_1) = O(x_1^2)$.
Without the loss of generality, we may assume that $G(x_1, h(x_1)) = x_1^{d-1} + o(x_1^{d-1})$ for some $d\geq 2$.
In this case, the dynamics on $W^c(0,0)$ is governed by
\begin{equation}
\label{semi-asym}
\dot x_1 = -x_1^d + o(x_1^d)\quad \text{ as }x_1\to 0.
\end{equation}
For sufficiently small but positive initial data $x_1(0)$, the solution of (\ref{semi-asym}) can be assumed to be positive for $\tau\geq 0$ and converges to $0$ as $\tau\to \infty$.
Then the L'H$\hat{o}$pital's rule shows that
\begin{equation}
\label{ex_LHopital}
-1 = \lim_{\tau\to \infty} \frac{\dot x_1}{x_1^d} = \lim_{\tau\to \infty} t^{-1} \int_1^{x_1(\tau)} s^{-d} ds.
\end{equation}
If $w(\tau)$ be the solution of 
\begin{equation}
\label{semi-asym-w}
\dot w = -w^d,\quad w(0) = 1,
\end{equation}
then we have $\tau = -\int_1^{w(\tau)} s^{-d}ds \equiv w^{-1}(w(\tau);w(0)=1)$ and from (\ref{ex_LHopital}) we also have
\begin{equation*}
w^{-1}(x_1(\tau)) = \tau + o(\tau)\quad \text{ as }t\to \infty.
\end{equation*}
Thus we have $x_1(t) = w(\tau+o(\tau))$ as $\tau\to \infty$, which means that the asymptotic behavior of $x_1$ is described by that of $w$.
\par
Since 
\begin{equation*}
w(\tau) = \frac{1}{\sqrt{d-1}}\tau^{-\frac{1}{d-1}} + C\tau^{-\frac{d}{d-1}} + o(\tau^{-\frac{d}{d-1}})
\end{equation*}
with a constant $C$, then we finally have
\begin{equation}
\label{semi-asym-final}
x_1(\tau) =  \frac{1}{\sqrt{d-1}}\tau^{-\frac{1}{d-1}} + o(\tau^{-\frac{1}{d-1}}).
\end{equation}
This argument shows that the asymptotic behavior of trajectories converging to $S_\infty$ can be described as those for the vector field given by the principal part of the original system.
Even for high dimensional systems, the center manifold reduction (e.g., \cite{C1981}) enables us to apply the same arguments.
\par
Another straightforward way is to calculate the transform $X : x_1\mapsto w$ so that (\ref{semi-asym}) is transformed into (\ref{semi-asym-w}).
Once we obtain such a transform, the equation (\ref{semi-asym-w}) can be solved explicitly, and finally we obtain the original asymptotic behavior like (\ref{semi-asym-final}).

\subsection{Nilpotent type}
If an equilibrium for $x' = g(x)$ or the corresponding transformed vector field is nilpotent, one of useful ways to analyze the local dynamics around it to apply {\em blowing-up} technique (e.g., \cite{D1993}), which we shall call it {\em desingularization} of singularities.
In many cases, a systematic way to unravel the precise degenerate structure of equilibria so that an appropriate desingularization can be chosen is to use {\em Newton diagrams} reviewed below.
Consider the (formal) expansion of vector field
\begin{equation}
X(\bar{x}) = \sum_{j=1}^n \sum_{\bar m + \bar 1 \in \mathbb{N}^n} a_{\bar m j} \bar x^{\bar m} x_j \frac{\partial }{\partial x_j}
\end{equation}
with $X\in \chi(\mathbb{R}^n)$ and $X(0) = 0$.
Let the {\em support} of $X$ be the set of multi-indices
\begin{equation*}
S := \{\{\bar m + \bar 1\} \in \mathbb{N}^n \mid \exists j \text{ such that }a_{\bar m j}\not = 0\}.
\end{equation*}

\begin{dfn}[Newton diagrams, e.g., \cite{D1993}]\rm
Let $X$ be a vector field with the above (formal) expansion.
\begin{enumerate}
\item The {\em Newton polyhedron} of $X$ is the set $\Gamma$ defined as the convex envelop of the set $P$ given by
\begin{equation*}
P = \bigcup_{\bar k \in S} \{\bar k + \mathbb{R}_+^n\},\quad  \mathbb{R}_+^n = [0,\infty)^n.
\end{equation*}
\item The {\em Newton diagrams} of $X$ is the union $\gamma$ of compact faces $\gamma_j$ of the Newton polyhedron $\Gamma$.
\item The {\em principal part} of $X$ is the vector field $X_\Delta$ which has the following expansion:
\begin{equation}
X_\Delta(\bar{x}) = \sum_{j=1}^n \sum_{\bar m + \bar 1\in \gamma} a_{\bar m j} \bar x^{\bar m} x_j \frac{\partial }{\partial x_j}.
\end{equation}
\item The {\em quasi-homogeneous part of $X_\Delta$ associated with $\gamma_j$} is the vector field $X_j$ given by
\begin{equation}
X_j(\bar{x}) = \sum_{j=1}^n \sum_{\bar m + \bar 1\in \gamma_j} a_{\bar m j} \bar x^{\bar m} x_j \frac{\partial }{\partial x_j}.
\end{equation}
\end{enumerate}
\end{dfn}

After application of compactifications (of type $\alpha$), equilibria on the horizon are just bounded equilibria for desingularized vector fields.
Therefore, up to translation of points, the Newton diagram can be applied to desingularized vector fields near equilibria for detecting the quasi-homogeneous component\footnote{
Here the initial type $\alpha$ of asymptotically quasi-homogeneous vector field $f$ is assumed to be known in advance.
The detection of appropriate $\alpha$ remains open for general systems.
}.
Once the Newton diagram is obtained, the desingularization of degenerate equilibria follows from the standard treatment in singularity theory.
Hopefully, equilibria for the desingularized system become hyperbolic, and the preceding approach can be applied.

\subsection{Strategy for detecting blow-up rates}

Here we gather the approach how we detect rigorous blow-up rates of blow-up solutions.
Our approach is based on the study of invariant sets on the horizon for desingularized vector fields and appropriate transformations of vector fields with smooth and orbital equivalence of dynamical systems.

\begin{dfn}[Equivalence]\rm
Consider flows $\phi$ and $\psi$ generated by differential equations $x'=f(x)$ and $y'=g(y)$ on phase spaces $X$ and $Y$ ($\mathbb{R}^n$ or its open subsets), respectively.
We say that $\phi$ and $\psi$ are {\em smoothly equivalent} if there is a diffeomorphism $h:X\to Y$ such that 
\begin{equation*}
f(x) = M^{-1}(x)g(h(x))\quad \text{ where }\quad M(x) = \frac{dh(x)}{dx}.
\end{equation*}
In that case, flows $\phi$ and $\psi$ are transformed into each other by the coordinate transform $y=h(x)$. 
In particular, $\phi$ and $\psi$ determines the same dynamics.
\par
On the other hand, we say that two flows $\phi_1$ and $\phi_2$ generated by smooth differential equations $\dot x = f_i(x)\ (i=1,2)$ are said to be {\em orbitally equivalent} if there is a positive (smooth) function $\mu : X\to \mathbb{R}$ such that $f_1(x) = \mu(x)f_2(x)$.
This relationship means that trajectories for $\phi_1$ and $\phi_2$ differ only in time parameterization.
\end{dfn}

A strategy for detecting blow-up rates is based on explicit representation of trajectories on center-stable manifolds of invariant sets on the horizon, possibly with the assistance of smooth equivalence and orbital equivalence.

\begin{description}
\item[Step 1] : Apply the compactification associated with the type of asymptotically quasi-homogeneous vector fields to the original problem, and detect invariant sets $S_\infty$ on the horizon for desingularized vector fields.
\end{description}
This step is fundamental and the same as in the preceding works for studying dynamics at infinity (e.g., \cite{DH1999}).
Here we assume that $S_\infty$ is either an equilibrium or a periodic orbit.
\begin{description}
\item[Step 2] : Study the linear stability of $S_\infty$. 
If $S_\infty$ is hyperbolic, the blow-up rate is given by results stated in Section \ref{section-type-I}.
\end{description}
This step is involved in Section \ref{section-type-I}.

\begin{description}
\item[Step 3] : Assume that a trajectory for the desingularized vector field is on $W^{cs}(S_\infty)$.
Then we transform the vector field with further time-variable desingularizations and orbital (or smooth) equivalences so that the transformed (desingularized) vector field for variables, say $(r,x)\in \mathbb{R}^{1+(n-1)}$, can be {\em explicitly} solved or is topologically conjugate to the linearization near $S_\infty$ (via e.g., Hartman-Grobman's theorem, e.g., \cite{Rob}).
\end{description}

We know that positive functions giving orbital equivalence can trigger blow-up rates of blow-up solutions different from type-I rate.
Our arguments refer to asymptotic behavior in {\em critical cases} and, in such cases, the arguments shown in Section \ref{section-semi-hyp} describes the asymptotic behavior of trajectories in terms of those for {\em simpler} dynamical systems.


\begin{description}
\item[Step 4] : Calculate the asymptotic behavior $t_{\max}-t$ from the total time-variable desingularization $dt/d\eta = T(r(\eta),x(\eta))$ and the asymptotic form of $r(\eta)$ and $x(\eta)$ near $S_\infty$.
\end{description}

The concrete strategy depends on problems and structure of invariant sets on the horizon for desingularized vector fields.
Nevertheless, we will see that the above methodology will give a fundamental guideline for detecting dynamics of blow-up solutions including blow-up rates.

\subsection{Extinction and compacton traveling waves}
Next we consider the ordinary differential equation in $\mathbb{R}^n$: 
\begin{equation*}
x' = f(x).
\end{equation*}
Our present interest is a solution $x(t)$ such that $x(t)$ arrives at a point $p\in \mathbb{R}^n$ at a time $t=T$ and that {\em $x(t)\equiv p$ holds for all $t\geq T$}.
Such a kind of solutions can be observed in terms of {\em finite-time extinction} for degenerate or nonlinear-diffusive parabolic systems.
For example, consider the traveling wave equation of 
\begin{equation*}
u_t = d(u^{m+1})_{xx} + f(u),\quad (t,x)\in [0,\infty)\times \mathbb{R}
\end{equation*}
with $m>0$ and smooth $f : \mathbb{R}\to \mathbb{R}$.
The assumption $m > 0$ is crucial in this case, otherwise the above equation is just a reaction-diffusion equation if $m=0$.
In particular, the equation degenerates at $u=0$, and hence the system admits a singularity at $u=0$\footnote{
This point is different from blow-up problems, since blow-up problems are typically considered in {\em smooth} differential equations.
}.
Details are discussed in Section \ref{section-ex-extinction}.

\begin{rem}
\label{rem-slogan}
We gather the similarity between blow-up problems and extinction problems.
\begin{itemize}
\item The vector fields have singularity, which corresponds to invariant sets $S$, such as equilibria, for associated {\em desingularized} vector fields.
\item The targeting singular solutions correspond to those asymptotic to $S$; in particular, trajectories on {\em center-stable manifolds $W^{cs}(S)$ of $S$}, for desingularized vector fields.
\item The maximal existence time of singular solutions can be calculated by time-variable desingularizations determining desingularized vector fields.
\item The asymptotic rates near singularities can be characterized by {\em dynamics on $W^{cs}(S)$} and time-variable desingularizations.
\end{itemize}
\end{rem}

We see in the following examples as well as preceding studies that finite-time singularities can be characterized according to slogans in Remark \ref{rem-slogan}.

\section{Concrete asymptotic behavior}
\label{section-example}
We demonstrate several examples and compute rigorous blow-up rates of blow-up solutions and extinction rates of extinction solutions.
The main aim in this section is to show that the dynamics on center-(un)stable manifolds actually determines asymptotic behavior near blow-up as well as extinction.
Combining with rigorous asymptotic rates derived in previous works, our approach reveals a comprehensive understanding of finite-time asymptotics depending on internal data such as order of nonlinearity and coefficients of vector fields.
Let $C>0$ be a generic constant which is used in various estimates below.
%
%
\subsection{Anada-Ishiwata-Ushijima's example : Blow-up}
The first example is 
\begin{equation}
\label{AIU}
\frac{du_0}{dt} = u_1 u_0^{-2},\quad \frac{du_1}{dt} = u_1^2 u_0^{-1}.
\end{equation}
There is a following fact about blow-up solutions of (\ref{AIU}) shown in \cite{AIU}.
\begin{fact}[\cite{AIU}]
\label{fact-AIU}
For positive initial data $u_0(0) > 0$ and $u_1(0) > 0$, the solution $(u_0(t), u_1(t))$ blows up at $t = t_{\max}$ with the following rate:
\begin{equation*}
u_0(t) = O((\log(t_{\max}-t)^{-1})^{1/2}),\quad u_1(t) = O((t_{\max}-t)^{-1}(\log(t_{\max}-t)^{-1})^{1/2})\text{ as }t\to t_{\max}.
\end{equation*}
\end{fact}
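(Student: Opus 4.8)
The plan is to recast (\ref{AIU}) in the framework of Section~\ref{section-mechanism} and reduce the blow-up analysis to the one-dimensional semi-hyperbolic computation of Section~\ref{section-semi-hyp}. In the region $u_0,u_1>0$ the field (\ref{AIU}) is asymptotically quasi-homogeneous of type $(0,1)$ and order $2$ (equivalently, the time rescaling $dt/ds=u_0^2$ produces the polynomial field $u_0'=u_1$, $u_1'=u_0u_1^2$), but the weight of $u_0$ vanishes, so only $u_1$ can be driven to the horizon by a directional compactification. First I would set $r=1/u_1$, keep $x_0=u_0$, and desingularize time by $d\tau/dt=u_1=r^{-1}$; the resulting desingularized field (cf.~(\ref{ODE-desing-dir})) reads $\dot r=-r/x_0$, $\dot x_0=1/x_0^2$ (with $dt/d\tau=r$, $\cdot=d/d\tau$). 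On the horizon $\mathcal E=\{r=0\}$ the coordinate $x_0$ still diverges, so the invariant set $S_\infty$ that the blow-up trajectory must approach is the ``corner at infinity'' $r=x_0^{-1}=0$, which I would expose by a second directional compactification $\rho=1/x_0$, giving
\begin{equation*}
\dot r=-r\rho,\qquad \dot\rho=-\rho^4
\end{equation*}
near the (linearly zero) equilibrium $S_\infty=(r,\rho)=(0,0)$, still with $dt/d\tau=r$.

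Next I would carry out Step~3 of the strategy. The $\rho$-equation has the form (\ref{semi-asym}), here with $d=4$ (alternatively, the orbital equivalence dividing the field by $\rho$ turns it into $\dot r=-r$, $\dot\rho=-\rho^3$), so the argument around (\ref{ex_LHopital})--(\ref{semi-asym-final}) gives $\rho(\tau)\sim c_0\tau^{-1/3}$, hence $u_0(\tau)=x_0(\tau)\sim c_0^{-1}\tau^{1/3}$ as $\tau\to\infty$; integrating $\dot r/r=-\rho$ then yields $r(\tau)\sim C_1\exp(-u_0(\tau)^2/2)$, i.e.\ $u_1(\tau)=r(\tau)^{-1}\sim C_1^{-1}\exp(u_0(\tau)^2/2)$. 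In particular the trajectory through any positive initial data lies on $W^{cs}(S_\infty)$ and converges to $S_\infty$ as $\tau\to\infty$, consistently with Remark~\ref{rem-slogan}.

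Then (Step~4) I would recover the original time from $dt/d\tau=r(\tau)\sim C_1\exp(-u_0(\tau)^2/2)$: a change of variable reduces the integral to the incomplete-Gamma tail asymptotic $\int_Q^\infty q^{1/2}e^{-q/2}\,dq\sim 2Q^{1/2}e^{-Q/2}$ as $Q\to\infty$, and one obtains
\begin{equation*}
t_{\max}-t=\int_\tau^\infty r(\sigma)\,d\sigma\sim C\,u_0(\tau)\exp\!\big(-u_0(\tau)^2/2\big)\qquad\text{as }\tau\to\infty,
\end{equation*}
so in particular $t_{\max}<\infty$ and the solution blows up. Taking logarithms gives $u_0^2\sim 2\log((t_{\max}-t)^{-1})$, i.e.\ $u_0(t)=O((\log(t_{\max}-t)^{-1})^{1/2})$, and then $u_1\sim C_1^{-1}e^{u_0^2/2}\sim u_0/(t_{\max}-t)=O((t_{\max}-t)^{-1}(\log(t_{\max}-t)^{-1})^{1/2})$, which is the claim. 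As a cross-check, (\ref{AIU}) admits the first integral $u_1e^{-u_0^2/2}$, so its orbits are $u_1=Ae^{u_0^2/2}$ and $\dot u_0=Au_0^{-2}e^{u_0^2/2}$ integrates to $\int_{u_0(t)}^\infty s^2e^{-s^2/2}\,ds=A(t_{\max}-t)$, giving the same asymptotics directly.

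The main obstacle is the degeneracy of the invariant set at infinity. Unlike the type-I situations of Section~\ref{section-type-I}, $S_\infty$ is not hyperbolic: after the first compactification it is a noncompact invariant line on which the flow itself escapes to infinity, and resolving this requires the nested compactification above, whose limiting equilibrium is linearly zero. One must therefore justify the successive time desingularizations and orbital equivalences that unfold this equilibrium into the semi-hyperbolic model (\ref{semi-asym}), and --- crucially for the rate --- keep track of the composite factor $dt/d(\text{final time})$ through every transformation; it is precisely this bookkeeping, combined with the Gaussian-tail (incomplete-Gamma) asymptotics, that produces the non-type-I logarithmic correction $(\log(t_{\max}-t)^{-1})^{1/2}$.
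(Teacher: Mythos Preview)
Your derivation is correct and recovers exactly the rates in Fact~\ref{fact-AIU}, but the route you take is more elaborate than the paper's. The paper applies the same directional compactification $u_0=x$, $u_1=1/r$, but chooses the combined time-desingularization $dt/d\tau=rx^{2}$ from the outset (absorbing in one step both the horizon factor $r$ and the finite singularity $x^{-2}$ of the field). This yields the explicitly solvable system $\dot x=1$, $\dot r=-rx$, with $x(\tau)=\tau+C$, $r(\tau)=C'e^{-(\tau+C)^{2}/2}$; the Gaussian tail then gives $t_{\max}-t\sim C'\tau e^{-\tau^{2}/2}$ and the rates follow immediately. No equilibrium is ever analyzed: the trajectory simply drifts to $(r,x)=(0,\infty)$ along a closed-form curve.

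Your approach instead keeps only $dt/d\tau=r$, which leaves $x_{0}$ escaping to infinity on the horizon; you resolve this ``corner'' with a second compactification $\rho=1/x_{0}$, arrive at a linearly zero equilibrium, and invoke the semi-hyperbolic asymptotics of Section~\ref{section-semi-hyp}. This is more faithful to the general Step~1--4 strategy of Section~\ref{section-mechanism} (and in fact anticipates the successive-compactification phenomenon the paper only highlights later in the Andrews example), at the cost of extra bookkeeping. For this particular system the paper's one-shot desingularization is shorter precisely because the resulting field integrates in closed form. Your first-integral cross-check $u_{1}e^{-u_{0}^{2}/2}=\text{const}$ is the same relation as the paper's explicit solution $r=C'e^{-x^{2}/2}$.
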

We try to detect blow-up rates from the viewpoint of dynamics around equilibria at infinity.
The direct observation yields the following property of vector fields.
\begin{lem}
The vector field (\ref{AIU}) is quasi-homogeneous of type $(0,1)$ and order $2$.
\end{lem}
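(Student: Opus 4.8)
The plan is to verify directly the scaling identity of Definition \ref{dfn-AQH}, which for this example holds \emph{exactly} rather than merely in the limit. Identify the coordinates as $(x_1,x_2)=(u_0,u_1)$ and write the right-hand side of (\ref{AIU}) as $f=(f_1,f_2)$ with $f_1(u_0,u_1)=u_1u_0^{-2}$ and $f_2(u_0,u_1)=u_1^2u_0^{-1}$. Take $\alpha=(\alpha_1,\alpha_2)=(0,1)$ and $k=1$, so the candidate order is $k+1=2$. First I would record the admissible domain: since $I_\alpha=\{2\}$, a natural choice is $U=\{(u_0,u_1)\in\mathbb{R}^2\mid u_0>0\}$ with $\tilde U=(0,\infty)$, which is admissible with respect to $\alpha$ and on which $f$ is smooth.

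Next, substitute the quasi-homogeneous dilation $(u_0,u_1)\mapsto(R^{\alpha_1}u_0,R^{\alpha_2}u_1)=(u_0,Ru_1)$ into each component. One obtains $f_1(u_0,Ru_1)=(Ru_1)u_0^{-2}=R^{\,k+\alpha_1}f_1(u_0,u_1)$ and $f_2(u_0,Ru_1)=(Ru_1)^2u_0^{-1}=R^{\,k+\alpha_2}f_2(u_0,u_1)$, since $k+\alpha_1=1$ and $k+\alpha_2=2$. Thus $f$ itself is quasi-homogeneous of type $(0,1)$ and order $2$; in particular $f_{\alpha,k}\equiv f$, so the bracketed expression in the limit of Definition \ref{dfn-AQH} is identically zero, whence $f$ is \emph{a fortiori} asymptotically quasi-homogeneous of the stated type and order.

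Finally I would check the remaining hypotheses in Definition \ref{dfn-AQH} — the uniform boundedness condition for the index in $I_\alpha$ and the behaviour on the set $U_1=\{(u_0,\pm 1)\mid u_0\in\tilde U\}$ — which are immediate on $U$ provided one stays away from $u_0=0$, where the vector field is singular. There is essentially no obstacle here: the only point requiring care is the bookkeeping of which coordinate carries the nonzero weight (here $u_1$), together with the observation that the negative powers of $u_0$ are harmless because $u_0$ has weight zero and the admissible domain excludes $u_0=0$.
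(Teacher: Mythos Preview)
Your proposal is correct and matches the paper's approach: the paper simply states that ``direct observation yields'' the lemma, and your explicit verification of the scaling identities $f_j(R^{\alpha_1}u_0,R^{\alpha_2}u_1)=R^{k+\alpha_j}f_j(u_0,u_1)$ for $\alpha=(0,1)$, $k=1$ is exactly what that observation amounts to. One small remark: the lemma asserts \emph{quasi-homogeneity} (the exact scaling relation), not merely asymptotic quasi-homogeneity, so your core computation in the second paragraph already finishes the proof; the additional bookkeeping in your final paragraph about Definition~\ref{dfn-AQH} is not needed for the statement as written.
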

Following the type of vector fields, introduce the following directional compactification:
\begin{equation}
\label{cpt-1}
u_0 = x,\quad u_1 = \frac{1}{r}.
\end{equation}
Then the translated vector field is
\begin{equation*}
\frac{dx}{dt} = r^{-1} x^{-2},\quad \frac{dr}{dt} = -x^{-1}.
\end{equation*}
Introducing the time variable desingularization
\begin{equation*}
\frac{dt}{d\tau} = rx^2,
\end{equation*}
we have the following desingularized vector field:
\begin{equation*}
\frac{dx}{d\tau} = 1,\quad \frac{dr}{dt} = -rx.
\end{equation*}
Note that the function $rx^2$ is positive for $r> 0$ and $x\not = 0$.
We then solve the system to obtain
\begin{equation}
\label{sol-1}
x(\tau) = \tau+C,\quad r(\tau) = C' e^{-\frac{(\tau+C)^2}{2}}
\end{equation}
for some $C\in \mathbb{R}$ and $C' > 0$.
The maximal existence time $t_{\max}$ of solutions in $t$-scale is computed by
\begin{equation*}
t_{\max} = \int_0^\infty C' e^{-\frac{(\eta+C)^2}{2}}(\eta+C)^2d\eta ,
\end{equation*}
which is finite and therefore the corresponding solution is a blow-up solution.
The asymptotic behavior of $t$ near $t_{\max}$ is given as the following estimate:
\begin{equation*}
t_{\max}-t = \int_\tau^\infty C' e^{-\frac{(\eta+C)^2}{2}}(\eta+C)^2 d\eta \sim C' e^{-\frac{(\tau+C)^2}{2}}\tau\quad \text{ as }\quad \tau\to \infty\ (\Leftrightarrow t\to t_{\max}).
\end{equation*}
In particular, we have
\begin{equation*}
\tau \sim C(\log(t_{\max}-t)^{-1})^{1/2}.
\end{equation*}
Inserting this asymptotics to (\ref{cpt-1}) as well as (\ref{sol-1}), we have
\begin{align*}
u_0 &= x \sim \tau \sim C(\log(t_{\max}-t)^{-1})^{1/2}, \\
u_1 &= \frac{1}{r} \sim Ce^{\frac{\tau^2}{2}} = C(e^{\frac{\tau^2}{2}} \tau^{-1}) \tau \sim C(t_{\max}-t)^{-1}(\log(t_{\max}-t)^{-1})^{1/2}.
\end{align*}
The above asymptotics are exactly the same as those in Fact \ref{fact-AIU}.
More precisely, we have the following result.
\begin{thm}
Consider (\ref{AIU}) the initial data $(u_0(0), u_0(1))$. 
For sufficiently large $u_1(0)$, the $u_1$-component of the solution blows up at $t=t_{\max} < \infty$ with the blow-up rate $u_1(t) \sim C(t_{\max}-t)^{-1} (\log (t_{\max}-t)^{-1})^{1/2}$, while $u_0(t)$ also blows up with the rate $u_0(t) \sim C(\log (t_{\max}-t)^{-1})^{1/2}$.
\end{thm}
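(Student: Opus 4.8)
The plan is to execute the four-step strategy of Section \ref{section-mechanism} on the explicitly solvable desingularized system already obtained above, and then to make the asymptotic computations rigorous. Step~3 (orbital or smooth equivalence) is not needed here because the desingularized field is already integrable in closed form.

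First I would fix positive initial data $(u_0(0),u_1(0))$ and pin down the constants in (\ref{sol-1}): in the chart (\ref{cpt-1}) together with the rescaling $dt/d\tau = rx^2$ one gets $C = u_0(0) > 0$ and $C' = u_1(0)^{-1}\exp(u_0(0)^2/2) > 0$. Because $x(\tau) = \tau + C > 0$ for every $\tau \ge 0$, the factor $rx^2$ is strictly positive along the orbit, so $t\mapsto\tau$ is an orientation-preserving diffeomorphism of $[0,t_{\max})$ onto $[0,\infty)$. Since (\ref{AIU}) is exactly (not merely asymptotically) quasi-homogeneous, the desingularized field $\dot x = 1,\ \dot r = -rx$ and the solution (\ref{sol-1}) are exact; the hypothesis that $u_1(0)$ be large is therefore only a convenient way of placing the initial point well inside the chart (\ref{cpt-1}) near the horizon, and plays no role in the estimates. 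Then $t_{\max} = C'\int_0^\infty e^{-(\eta+C)^2/2}(\eta+C)^2\,d\eta$ is a convergent Gaussian-type integral, which proves that the orbit is a finite-time blow-up solution with $u_1 = 1/r\to\infty$ and $u_0 = x\to\infty$ as $\tau\to\infty$ (equivalently $t\to t_{\max}$).

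Second I would analyze $t_{\max}-t = C'\int_{a}^{\infty} e^{-s^2/2}s^2\,ds$ with $a := \tau + C$. Integration by parts gives $\int_a^\infty s^2 e^{-s^2/2}\,ds = a e^{-a^2/2} + \int_a^\infty e^{-s^2/2}\,ds$, and the standard Gaussian tail (Mills ratio) estimate $\int_a^\infty e^{-s^2/2}\,ds \sim a^{-1}e^{-a^2/2}$ yields $t_{\max}-t \sim C' a e^{-a^2/2}$ as $a\to\infty$. Taking logarithms, the quadratic term dominates the logarithmic one, so $a^2 \sim 2\log((t_{\max}-t)^{-1})$, hence $\tau\sim a\sim \sqrt{2}\,(\log((t_{\max}-t)^{-1}))^{1/2}$. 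Feeding this back into $u_0 = x = \tau + C$ gives the stated rate for $u_0$, and into $u_1 = 1/r = (C')^{-1}e^{a^2/2}\sim a\,(t_{\max}-t)^{-1}$ (using $e^{a^2/2}\sim C' a/(t_{\max}-t)$ from the previous line) gives the stated rate for $u_1$; comparison with Fact~\ref{fact-AIU} confirms agreement up to the value of the generic constant.

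The only genuinely delicate point is the asymptotic inversion. One must check that the $\log a$ correction is negligible against $a^2/2$, so that $\tau\sim\sqrt{2}\,(\log((t_{\max}-t)^{-1}))^{1/2}$ holds in the sharp sense of $\sim$ and not merely up to bounded factors, and, propagating the same correction through $u_1$, that it affects only the lower-order term and not the $(\log((t_{\max}-t)^{-1}))^{1/2}$ prefactor. This is routine once the Gaussian tail asymptotics are in hand; everything else --- the quasi-homogeneity lemma, the derivation of the desingularized field, and the explicit integration --- is already available in the excerpt.
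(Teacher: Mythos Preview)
Your proposal is correct and follows essentially the same route as the paper: the same directional compactification (\ref{cpt-1}), the same desingularization $dt/d\tau=rx^2$, the explicit solution (\ref{sol-1}), the Gaussian-tail asymptotics for $t_{\max}-t$, and the logarithmic inversion. Your additions---the explicit identification of $C,C'$ from the initial data, the integration-by-parts plus Mills-ratio justification of the tail estimate, and the remark that the $\log a$ correction is negligible in the inversion---are exactly the details the paper leaves implicit, so this is the paper's argument made slightly more rigorous rather than a different proof.
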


%
%
\subsection{Ishiwata-Yazaki's example : Blow-up}

The next example is
\begin{equation}
\label{IY}
\frac{dv_0}{dt} = av_0^{\frac{a+1}{a}}v_1,\quad \frac{dv_1}{dt} = av_1^{\frac{a+1}{a}}v_0,
\end{equation}
where $a\in \mathbb{R}$ is a parameter. 
There is a following fact about blow-up solutions of (\ref{IY}) obtained in \cite{IY2003}.
\begin{fact}[\cite{IY2003}]
\label{fact-IY}
Set positive initial data $v_0(0) > 0$ and $v_1(0) > 0$.
If $a \in (0,1)$ and $v_0(0)\not = v_1(0)$, the solution $(v_0(t), v_1(t))$ blows up at $t = t_{\max}$ with the rate $O((t_{\max}-t)^{-a})$.
\end{fact}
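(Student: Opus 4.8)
\emph{Proof proposal.} I would follow the four–step strategy of Section~\ref{section-mechanism}. \textbf{Step 1.} A direct scaling computation shows that the vector field~(\ref{IY}) is quasi-homogeneous of type $(1,1)$ and order $k+1=(2a+1)/a$, so $k=(a+1)/a$; in particular the type–I rate furnished by Section~\ref{section-type-I} would be $v_i(t)\sim C(t_{\max}-t)^{-\alpha_i/k}=C(t_{\max}-t)^{-a/(a+1)}$, which is \emph{not} the asserted rate, so a non–hyperbolic invariant set at infinity must be responsible. Since (\ref{IY}) is invariant under the swap $(v_0,v_1)\mapsto(v_1,v_0)$, I would assume without loss of generality $v_1(0)>v_0(0)$ and apply the directional compactification $v_1=1/\rho$, $v_0=y/\rho$ together with the time–scale desingularization $dt/d\sigma=\rho^{(a+1)/a}$. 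A routine calculation gives, on $\{\rho\ge0\}$ with horizon $\mathcal{E}=\{\rho=0\}$,
\begin{equation*}
\frac{dy}{d\sigma}=a\bigl(y^{(a+1)/a}-y^2\bigr),\qquad \frac{d\rho}{d\sigma}=-ay\rho .
\end{equation*}

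\textbf{Step 2.} On $\mathcal{E}$ the equilibria are $y=1$ (the diagonal $v_0=v_1$) and the whole ray $\{y=0\}$; since $(a+1)/a>2$ for $a\in(0,1)$, one has $dy/d\sigma<0$ on $(0,1)$, so on the horizon $y$ decreases monotonically from $1$ to $0$. The point $(1,0)$ is a hyperbolic saddle whose one–dimensional stable manifold consists exactly of the orbits with $v_0\equiv v_1$; these are the type–I blow–ups with rate $(t_{\max}-t)^{-a/(a+1)}$ and are excluded by hypothesis. For $v_1(0)>v_0(0)$ we have $y(0)\in(0,1)$, and I would show that the trajectory of the desingularized field converges to the origin $(y,\rho)=(0,0)$: $y$ and $\rho$ are monotone along the orbit, and the quotient $w=v_0/v_1$ satisfies $v_1\,dw/dv_1=w^{1/a}-w<0$, which, together with $v_1\to\infty$ (forced by $v_1'\ge av_0(0)v_1^{(a+1)/a}$), rules out any positive limit for $w$, hence $y\to0$ and then $\rho\to0$.

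\textbf{Steps 3--4.} The origin is linearly zero, but near it the system factors as $dy/d\sigma=y(-ay+ay^{1/a})$, $d\rho/d\sigma=y(-a\rho)$, so the further time rescaling $d\eta/d\sigma=y$ produces the \emph{decoupled, hyperbolic} system
\begin{equation*}
\frac{dy}{d\eta}=-ay+ay^{1/a},\qquad \frac{d\rho}{d\eta}=-a\rho ,
\end{equation*}
for which $(0,0)$ is a stable node with double eigenvalue $-a$. Thus $\rho(\eta)\sim C_\rho e^{-a\eta}$, while the scalar equation for $y$ is solved by the substitution $y=e^{-a\eta}z$ (equivalently, integrated explicitly): with $p=1/a-1>0$ one finds $z^{-p}$ decreasing to a limit $L$, and $L>0$ reduces \emph{exactly} to $y(\eta_0)<1$, so $z\to C_y\in(0,\infty)$ and $y(\eta)\sim C_y e^{-a\eta}$. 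Consequently $v_0=y/\rho\to C_y/C_\rho\in(0,\infty)$ (so $v_0$ stays bounded) while $v_1=1/\rho\sim C_\rho^{-1}e^{a\eta}\to\infty$. Composing the two time changes, $dt/d\eta=\rho^{(a+1)/a}/y\sim C e^{-(a+1)\eta+a\eta}=Ce^{-\eta}$, so $t_{\max}-t=\int_\eta^\infty(dt/d\eta')\,d\eta'\sim Ce^{-\eta}$ is finite, the solution blows up, and $e^{\eta}\sim C(t_{\max}-t)^{-1}$; hence $v_1(t)\sim C(t_{\max}-t)^{-a}$, which gives the stated $O((t_{\max}-t)^{-a})$ behaviour (with $v_0,v_1$ interchanged when $v_0(0)>v_1(0)$) and in fact refines it by identifying the limit of the other component.

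\textbf{Main obstacle.} The crux is Step~3: realizing that, off the diagonal, the blow–up is \emph{not} governed by the type–I saddle but by the linearly–zero equilibrium at which one component dominates, and then desingularizing that equilibrium correctly --- the single rescaling $d\eta=y\,d\sigma$ is exactly what turns it into a hyperbolic node. Subsidiary points, none individually difficult, are: (i) showing the off–diagonal orbit actually reaches $(0,0)$ via the estimate $w=v_0/v_1\to0$; (ii) controlling the non–smooth perturbation $ay^{1/a}$ in the reduced scalar equation so that $y\sim C_ye^{-a\eta}$ with $C_y$ finite and strictly positive, which is where $a<1$ (equivalently $y<1$) enters; and (iii) bookkeeping the composed time–variable desingularizations so that the exponent cancellation $-(a+1)+a=-1$ --- the precise mechanism producing the exponent $a$ rather than the type–I exponent $a/(a+1)$ --- is made rigorous.
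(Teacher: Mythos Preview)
Your proposal is correct and follows essentially the same route as the paper: the same directional compactification (with $v_0,v_1$ interchanged, which is immaterial by the swap symmetry you note), the same identification of the linearly--zero equilibrium $(0,0)$ on the horizon versus the type--I saddle at $y=1$, and the same second time--rescaling $d\eta=y\,d\sigma$ that turns the origin into a hyperbolic node with double eigenvalue $-a$, after which $dt/d\eta\sim Ce^{-\eta}$ yields the rate $(t_{\max}-t)^{-a}$. Your extra care in verifying that the off--diagonal trajectory actually reaches $(0,0)$ (via the monotone quotient $w=v_0/v_1$) and that the subordinate component remains bounded is a welcome refinement that the paper leaves implicit.
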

This subsection aims at revealing the above rate of blow-up solutions as well as the mechanism of dynamics at infinity.
Firstly, the direct observation yields the following property.
\begin{lem}
The vector field (\ref{IY}) is homogeneous of order $2+a^{-1}$ with a natural extension of order $k\in \mathbb{N}$ to $\mathbb{R}_{\geq 1}$\footnote
{
Since we only treat the scaling for positive $r > 0$, then such an extension of \lq\lq order" is sufficient to our arguments.
In the field of algebraic geometry, this treatment is also considered in terms of {\em positive (quasi-)homogeneity}.
}.
\end{lem}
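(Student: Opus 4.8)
The plan is to check directly, from Definition \ref{dfn-AQH}, that the right-hand side of (\ref{IY}) is quasi-homogeneous of type $(1,1)$ — i.e.\ homogeneous — with the asserted order, the only subtlety being that the ``order'' need no longer be an integer once the nonlinearity carries the exponent $1/a$.

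First I would substitute the scaling $(v_0,v_1)\mapsto(Rv_0,Rv_1)$ into each component. For $f_1(v_0,v_1)=av_0^{(a+1)/a}v_1$ one gets
\begin{equation*}
f_1(Rv_0,Rv_1) = a(Rv_0)^{(a+1)/a}(Rv_1) = R^{\,\frac{a+1}{a}+1}\,f_1(v_0,v_1) = R^{\,2+a^{-1}}\,f_1(v_0,v_1),
\end{equation*}
and by the symmetry $v_0\leftrightarrow v_1$ of the system the identical relation holds for $f_2$. Matching this against the quasi-homogeneity relation $f_j(R^{\alpha_1}v_0,R^{\alpha_2}v_1)=R^{\,k+\alpha_j}f_j(v_0,v_1)$ with $\alpha=(1,1)$ forces $k+1 = 2+a^{-1}$, hence $k = 1+a^{-1}$. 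A short check shows that imposing the relation on \emph{both} components rules out any type other than a positive scalar multiple of $(1,1)$ when $a\neq 1$ (the two resulting constraints on $k$ agree only if $\alpha_1=\alpha_2$), so the type is determined. Since each $f_j$ is a single monomial times a constant, the asymptotic remainder in Definition \ref{dfn-AQH} is identically zero, so (\ref{IY}) is in fact \emph{exactly} quasi-homogeneous; the word ``homogeneous'' (type $(1,\dots,1)$) is thus justified.

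The one point that needs a word — exactly what the footnote flags — is that $k = 1+a^{-1}$ is generally not a natural number; for $a\in(0,1)$ it is a real number larger than $2$. But the compactifications and time-scale desingularizations of Section \ref{section-type-I} only ever use the scaling factor $R=r^{-1}$ with $r>0$, so the powers $R^{k}$ and $R^{k+\alpha_j}$ are perfectly well defined for any real $k\ge 1$, and no step in those constructions requires $k\in\mathbb{N}$. Hence the notion ``order $k+1$'' extends verbatim to $k\in\mathbb{R}_{\ge1}$, and with this reading the statement holds. I do not expect any genuine obstacle here: the content is the one-line monomial computation above together with the observation that positivity of $r$ lets ``order'' be read as a real exponent; the only thing to be careful about is bookkeeping the exponent $\tfrac{a+1}{a}+1 = 2+a^{-1}$ and confirming that the apparent freedom in the type collapses to $(1,1)$.
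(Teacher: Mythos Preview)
Your proposal is correct and is exactly the ``direct observation'' the paper has in mind: the paper offers no proof beyond that phrase, and your one-line scaling computation $f_j(Rv_0,Rv_1)=R^{2+a^{-1}}f_j(v_0,v_1)$ together with the remark on extending the order to $\mathbb{R}_{\ge 1}$ is precisely what is intended. The additional check that the type is forced to be (a multiple of) $(1,1)$ is a nice bonus but not strictly needed for the lemma as stated.
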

Following the homogeneity, we introduce the following directional compactification:
\begin{equation*}
v_0 = \frac{1}{r},\quad v_1 = \frac{u}{r}.
\end{equation*}
Then the transformed vector field is
\begin{equation*}
\frac{dr}{dt} = -ar^{-\frac{1}{a}}u,\quad \frac{du}{dt} = ar^{-\frac{a+1}{a}}(-u^2 + u^{\frac{a+1}{a}}).
\end{equation*}
Using the time-scale desingularization
\begin{equation*}
\frac{dt}{d\tau} = r(t)^{\frac{a+1}{a}},
\end{equation*}
we have the following desingularized vector field:
\begin{equation}
\label{IY-desing}
\frac{dr}{d\tau} = -ar u,\quad \frac{du}{d\tau} = a(-u^2 + u^{\frac{a+1}{a}}).
\end{equation}
As for dynamics at infinity, we have the following observations.
\begin{prop}
Consider (\ref{IY-desing}) with $a < 1$. Then
\begin{enumerate}
\item Points $(r,u) = (0,0), (0,1)$ are equilibria at infinity.
\item The set $\{u = 1\}$ is an invariant manifold of (\ref{IY-desing}).
\item The equilibrium at infinity $(0,1)$ is a saddle. In particular, it is stable in $r$-direction and unstable in $u$-direction.
\end{enumerate}
\end{prop}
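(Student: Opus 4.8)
The plan is to verify the three assertions by direct computation on the desingularized vector field $g(r,u) = \big(-aru,\ a(-u^2 + u^{(a+1)/a})\big)$, noting first that the region relevant to the positive initial data $v_0(0),v_1(0)>0$ is $\{r>0,\ u>0\}$, since the compactification gives $r = 1/v_0 > 0$ and $u = v_1 r = v_1/v_0 > 0$; there the fractional power $u^{(a+1)/a}$ is smooth, and we work on the closure, which meets the horizon $\mathcal{E}=\{r=0\}$ in $\{u\geq 0\}$.

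For part (1) I would set $g(r,u)=0$. On $\mathcal{E}=\{r=0\}$ the first component $-aru$ vanishes identically, so the equilibria at infinity are exactly the zeros of the second component, i.e.\ of $-u^2 + u^{(a+1)/a} = u^2\big(u^{(1-a)/a} - 1\big)$. Since $0<a<1$ makes the exponent $(1-a)/a$ positive, the only solutions with $u\geq 0$ are $u=0$ and $u=1$, yielding precisely $(0,0)$ and $(0,1)$. For part (2), substituting $u=1$ into the second component gives $a(-1+1)=0$, so $\dot u\equiv 0$ along $\{u=1\}$ and this line is invariant for (\ref{IY-desing}).

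For part (3) I would compute the Jacobian $Dg$. Because $\partial_u(-aru)=-ar$ vanishes at $r=0$ and $\partial_r$ of the second component is identically $0$, the Jacobian at $(0,1)$ is diagonal: $Dg(0,1)=\diag(-a,\ 1-a)$, where the second entry comes from $\partial_u\big(a(-u^2+u^{(a+1)/a})\big)\big|_{u=1}=a\big(-2+\tfrac{a+1}{a}\big)=1-a$. Thus the eigenvalues are $-a<0$, with eigenvector along the $r$-axis, and $1-a>0$, with eigenvector along the $u$-axis (which is tangent to the invariant line from part (2)); using $0<a<1$ this exhibits $(0,1)$ as a hyperbolic saddle, stable in the $r$-direction and unstable in the $u$-direction.

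The computations are elementary and I do not expect a genuine obstacle; the only points requiring a little care are (i) restricting to $u>0$ so that $u^{(a+1)/a}$ and its derivative are well-defined and $C^1$ near $u=1$, and (ii) the role of the sign of $a$ — both the enumeration of equilibria in part (1) and the saddle structure in part (3) use $0<a<1$, consistent with the hypotheses of Fact \ref{fact-IY}, whereas outside this range the local picture at $(0,1)$ degenerates or changes type.
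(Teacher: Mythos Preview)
Your proof is correct and is precisely the kind of direct verification the paper has in mind; in fact the paper gives no separate proof of this proposition, presenting it simply as an ``observation'' following the derivation of (\ref{IY-desing}). Your Jacobian computation $Dg(0,1)=\diag(-a,1-a)$ and your remark that the saddle conclusion uses $0<a<1$ (the standing hypothesis from Fact~\ref{fact-IY}, not merely $a<1$) are exactly right.
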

Stationary Blow-up Theorem (Proposition \ref{prop-stationary-blowup}) indicates if the initial data $(v_0(0), v_1(0))$ are set so that $v_0(0)= v_1(0) > 0$ then the solution blows up with the blow-up rate $O((t_{\max}-t)^{\frac{-a}{a+1}})$.
The blow-up rate is different from Fact \ref{fact-IY}, while it is consistent in assumptions on initial data.
Moreover, the equilibrium $(0,1)$ being a saddle indicates that the blow-up solution with the blow-up rate $O((t_{\max}-t)^{\frac{-a}{a+1}})$ is unstable with respect to initial data.
\par
Now pay attention to the equilibrium $(0,0)$ for (\ref{IY-desing}), which is linearly zero.
Now introduce another time-scale desingularization
\begin{equation*}
\frac{d\eta}{d\tau} = u.
\end{equation*}
Note that $\{u=0\}$ is an invariant manifold for (\ref{IY-desing}), which implies that $u$ is always positive if $u(0) > 0$.
\par
\bigskip
We thus have
\begin{equation}
\label{IY-desing2}
\frac{dr}{d\eta} = -ar,\quad \frac{du}{d\eta} = a(-u + u^{\frac{1}{a}}),
\end{equation}
in which case the origin $(0,0)$ is hyperbolic.
Thus the solution is written as
\begin{equation*}
r(\eta) = Ce^{-a\eta},\quad u(\eta) = Ce^{-a\eta}(1+o(1)).
\end{equation*}
Note that, if $u(0) < 1$, then the solution goes to the origin.
The maximal existence time $t_{\max}$ is calculated as
\begin{equation*}
t_{\max} = \int_0^\infty r^{\frac{a+1}{a}}d\tau = \int_0^\infty r^{\frac{a+1}{a}} u^{-1}d\eta = C\int_0^\infty e^{-\eta}(1+o(1))d\eta < \infty,
\end{equation*}
which implies that the solution $(v_0(t), v_1(t))$ such that $(r(\eta), u(\eta))\in W^{cs}(0,0)$ in the $\eta$-scale is a blow-up solution.
The above estimate also shows
\begin{align*}
t_{\max} - t &\sim C\int_\eta^\infty e^{-\tilde \eta} d\tilde \eta = Ce^{-\eta}\quad \text{ as }\quad \eta\to \infty.
\end{align*}
Therefore we finally have
\begin{equation*}
v_0 = \frac{1}{r}= Ce^{a\eta} = C(e^{-\eta})^{-a} \sim C(t_{\max}-t)^{-a}.
\end{equation*}
In particular, the solution $(v_0(t), v_1(t))$ blows up at $t=t_{\max}$ with the rate $O((t_{\max}-t)^{-a})$ if $v_0(0) > v_1(0)$.
Symmetry of (\ref{IY}) implies that the same result holds if $v_0(0) < v_1(0)$ applying the directional compactification
\begin{equation*}
v_0 = \frac{u}{r},\quad v_1 = \frac{1}{r}.
\end{equation*}
Summarizing the above arguments, we have the following result.
\begin{thm}
Consider (\ref{IY}) with the initial data $v_0(0) > 0$ and $v_1(0) > 0$.
Assume that $a < 1$.
If $v_0(0) = v_1(0)$, then the solution blows up with the blow-up rate $O((t_{\max}-t)^{\frac{-a}{a+1}})$.
If $v_0(0) \not = v_1(0)$, then the solution blows up with the blow-up rate $O((t_{\max}-t)^{-a})$.
\end{thm}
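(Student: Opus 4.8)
\emph{Proof strategy.} The plan is to extract both blow-up rates from the desingularized vector field (\ref{IY-desing}) near its two equilibria at infinity---the saddle $(0,1)$ and the linearly zero point $(0,0)$---while keeping careful track of the three time parametrizations: the original $t$, the variable $\tau$ with $dt/d\tau = r^{(a+1)/a}$, and the further variable $\eta$ with $d\eta/d\tau = u$. Since $r^{(a+1)/a}>0$ for $r>0$ and, because $\{u=0\}$ is invariant, $u>0$ is preserved along trajectories with $u(0)>0$, all three parametrizations are orientation-preserving; so it suffices to compute the asymptotics of $r$ and $u$ as $\tau$ or $\eta\to+\infty$ along the relevant trajectory and then to check that the induced $t_{\max}$ is finite.

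For the symmetric case $v_0(0)=v_1(0)$, first I would pass to the chart $v_0=1/r$, $v_1=u/r$, in which the initial condition reads $u(0)=1$. Since $\{u=1\}$ is an invariant manifold of (\ref{IY-desing}) (and is the local stable manifold of the saddle $(0,1)$), the trajectory stays on it and the system collapses to $dr/d\tau=-ar$, so $r(\tau)=r(0)e^{-a\tau}$. Then $t_{\max}=\int_0^\infty r^{(a+1)/a}\,d\tau = r(0)^{(a+1)/a}\int_0^\infty e^{-(a+1)\tau}\,d\tau<\infty$, so the solution blows up, and from $t_{\max}-t\sim Ce^{-(a+1)\tau}$ we obtain $v_0=1/r\sim C(t_{\max}-t)^{-a/(a+1)}$ (and $v_1=1/r$ likewise). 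This is precisely the type-I rate that Proposition \ref{prop-stationary-blowup} attaches to the stable ($r$-)direction of the saddle $(0,1)$.

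For the asymmetric case I would use the $v_0\leftrightarrow v_1$ symmetry of (\ref{IY}) to reduce to $v_0(0)>v_1(0)$, i.e. $u(0)\in(0,1)$. A phase-plane analysis of (\ref{IY-desing}) on the positively invariant region $\{r>0,\ 0<u<1\}$---or, more cleanly, the first integral $v_0^{(a-1)/a}-v_1^{(a-1)/a}\equiv{\rm const}$ of (\ref{IY})---shows that the trajectory converges to $(0,0)$. After the extra desingularization $d\eta/d\tau=u$ the origin of (\ref{IY-desing2}) becomes a hyperbolic stable node (both eigenvalues $-a$), so $r(\eta)=Ce^{-a\eta}$ and $u(\eta)=Ce^{-a\eta}(1+o(1))$; then $t_{\max}=\int_0^\infty r^{(a+1)/a}u^{-1}\,d\eta = C\int_0^\infty e^{-\eta}(1+o(1))\,d\eta<\infty$, so the solution blows up, $t_{\max}-t\sim Ce^{-\eta}$, and $v_0=1/r=Ce^{a\eta}\sim C(t_{\max}-t)^{-a}$, the asserted rate.

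Apart from these computations, the main obstacle I anticipate is twofold. First, one must show that the trajectory issued from $u(0)\in(0,1)$ converges to the \emph{specific} point $(0,0)$ rather than to some other point of the line of equilibria $\{u=0\}$ of (\ref{IY-desing}); this is exactly what the first integral above delivers (equivalently, one argues directly that $r\downarrow 0$ along the trajectory). Second, the $o(1)$ estimates have to be promoted to the stated asymptotic equivalences for $t_{\max}-t$ and for $v_0$, which is done as in the l'H\^opital/reduction argument of Section \ref{section-semi-hyp}; here it is even simpler, because the governing equilibrium is hyperbolic after desingularization, so Hartman-Grobman gives the leading-order behaviour directly. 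Combining the two cases with the symmetry then yields the theorem.
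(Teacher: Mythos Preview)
Your proposal is correct and follows essentially the same route as the paper: the same directional compactification, the same two time-scale desingularizations, the identification of the saddle $(0,1)$ governing the symmetric case and the (after desingularization) hyperbolic node $(0,0)$ governing the asymmetric case, and the same $t_{\max}$ computations. Your use of the first integral $v_0^{(a-1)/a}-v_1^{(a-1)/a}$ to pin down the limit point $(0,0)$ is a small addition---the paper simply asserts that ``if $u(0)<1$, then the solution goes to the origin'' without further argument.
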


%
%
\subsection{Andrews' example 1 : Blow-up}
The next example is
\begin{equation}
\label{Andrews1}
\begin{cases}
\displaystyle{
\frac{du_0}{dt} = \frac{1}{\sin \theta} u_1u_0^2 - \frac{2a\cos\theta}{\sin \theta}u_0^3
}
, & \\
\displaystyle{
\frac{du_1}{dt} =  \frac{a}{\sin \theta} u_0u_1^2 - \frac{1-a}{\sin \theta} \frac{u_0u_1^3}{u_1 + 2\cos\theta u_0}
}, &
\end{cases}
\end{equation}
where $a\in (0,1)$ and $\theta\in (0,\pi/2)$ are parameters.
This system is originally studied in \cite{A2002}. See also \cite{AIU}.

\begin{fact}[cf. \cite{AIU, A2002}]
Consider (\ref{Andrews1}) with the initial data $(u_0(0), u_1(0))$ which are both positive.
\begin{enumerate}
\item If $a < 1/2$, the solution with sufficiently large positive initial data blows up at $t=t_{\max} < \infty$ with the rate $(t_{\max}-t)^{-1}$ as $t\to t_{\max}$.
\item If $a\in (1/2, 1)$, there is a blow-up with the blow-up rate $\{(t_{\max}-t)^{-1} \log (t_{\max}-t)^{-1}\}^{1/2}$ as $t\to t_{\max}$.
\item If $a= 1/2$, there is a blow-up solution with the blow-up rate $\{(t_{\max}-t)^{-1} (\log (t_{\max}-t)^{-1})^{1/2}\}^{1/2}$
as $t\to t_{\max}$.
\end{enumerate}
\end{fact}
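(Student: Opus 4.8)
The plan is to re-derive the three rates by the scheme of Section \ref{section-mechanism}: compactify, locate the invariant set on the horizon that governs the blow-up in each parameter window, desingularize it until it becomes elementary, solve the reduced dynamics, and translate back. First I would record, as a lemma, that the right-hand side of (\ref{Andrews1}) is quasi-homogeneous of type $(1,1)$ and order $3$ — each monomial of every component has total degree $3$, and the rational term $u_0u_1^3/(u_1+2\cos\theta\,u_0)$ has a degree-one homogeneous denominator, hence is itself of degree $3$ — so that $k=2$ and the associated type-I rate is $(t_{\max}-t)^{-1/2}$. Then I would apply the directional compactification $u_0=x/r,\ u_1=1/r$ (together with the symmetric chart) and the time-scale desingularization $dt/d\tau=r^2$, obtaining a desingularized vector field of the form
\begin{equation*}
\frac{dx}{d\tau}=\frac{x^2}{\sin\theta}B(x),\qquad \frac{dr}{d\tau}=-\frac{rx}{\sin\theta}C(x),
\end{equation*}
with $B,C$ explicit rational functions of $x$.

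Next I would locate the invariant sets on the horizon $\{r=0\}$. There is a hyperbolic equilibrium $(x_\ast,0)$ with $B(x_\ast)=0$, which via Proposition \ref{prop-stationary-blowup} accounts for a type-I branch, and there is the corner equilibrium at the origin $(x,r)=(0,0)$, which is linearly zero; the exotic rates must come from the latter. Following the linearly-zero reduction of Section \ref{section-mechanism}, I would divide the field by the monomial dictated by its Newton diagram at the origin (here $x$), turning the origin into an elementary equilibrium whose linear part has eigenvalues proportional to $1-a$ and to $1-2a$. This is exactly where the trichotomy of the statement originates: the origin becomes a hyperbolic node, a semi-hyperbolic point, or a hyperbolic saddle according as $1-2a$ is positive, zero, or negative.

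In the two hyperbolic regimes I would use the Hartman-Grobman theorem (or, since the leading part is diagonal, the explicit linear flow) to write the trajectory on the relevant (center-)stable manifold in the final time variable; in the critical regime $a=1/2$ I would invoke the center-manifold analysis of Section \ref{section-semi-hyp}, where the reduced one-dimensional equation $\dot w=-cw^d+o(w^d)$ is solved asymptotically by the L'H\^{o}pital / comparison argument, giving a power of the reduced time. In every case I would then integrate the composite time-variable desingularization $dt/d\eta=T(r(\eta),x(\eta))$ to verify $t_{\max}<\infty$ (convergence of the improper integral, as in the Anada-Ishiwata-Ushijima example) and to extract the asymptotics of $t_{\max}-t$; inverting this relation and substituting into the compactification yields the component-wise rates of $u_0,u_1$. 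The logarithmic factors arise structurally from composing the successive time rescalings — precisely the mechanism that produced $\tau\sim(\log(t_{\max}-t)^{-1})^{1/2}$ in the Anada-Ishiwata-Ushijima case — and the nested factor $(\log(t_{\max}-t)^{-1})^{1/4}$ at $a=1/2$ reflects a second pass through this reduction.

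The main obstacle is the critical case $a=1/2$: one must pin down the degree $d$ and the leading coefficient in the center-manifold expansion, carry out the (possibly iterated) blow-up and time rescaling, and control every $o(\cdot)$ remainder through the chain of time changes, since it is precisely these subleading terms that upgrade a bare power law into the $(\log)^{1/2}$- and $(\log)^{1/4}$-corrected rates. A secondary difficulty is organizational: one must verify which invariant set governs the blow-up in each parameter window, and in which chart, so that the hypothesis of sufficiently large positive initial data is correctly matched to the exotic corner branch rather than to the type-I equilibrium $(x_\ast,0)$.
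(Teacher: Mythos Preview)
Your overall scheme matches the paper's: compactify, locate the equilibria on $\{r=0\}$, desingularize the degenerate corner, solve, and pull back through the composite time change. But the structural picture you paint at the corner is wrong, and it misidentifies the mechanism behind the trichotomy.

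After $u_0=x/r$, $u_1=1/r$ one should take $dt/d\tau=r^{2}(1+2\cos\theta\,x)$ --- the extra factor is needed to clear the rational denominator in (\ref{Andrews1}); with only $r^2$ the $\tau$-field is not polynomial. Besides the hyperbolic point $x_\ast=(1-2a)/(2a\cos\theta)$ one finds the linearly-zero origin, and dividing once by $x$ gives
\[
\frac{dr}{d\eta}=-\frac{r}{\sin\theta}\bigl(1+2a\cos\theta\,x\bigr),\qquad
\frac{dx}{d\eta}=\frac{2\cos\theta(1-2a)}{\sin\theta}\,x^{2}-\frac{4a\cos^{2}\theta}{\sin\theta}\,x^{3},
\]
whose linearization at the origin has eigenvalues $(-1/\sin\theta,\,0)$ for \emph{every} $a\in(0,1)$ --- not ``proportional to $1-a$ and $1-2a$''. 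The origin is thus always semi-hyperbolic, never a node or a saddle, so the Hartman--Grobman argument you propose for the ``two hyperbolic regimes'' is unavailable; carried through, it would predict a pure power rate with no logarithm when $a>1/2$.

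The actual trichotomy is this: for $a<1/2$ the point $x_\ast>0$ is a sink, it attracts the generic trajectory, and Proposition~\ref{prop-stationary-blowup} gives the type-I rate directly; for $a\geq 1/2$ the semi-hyperbolic origin governs, and the only distinction between $a>1/2$ and $a=1/2$ is the \emph{degree} of the one-dimensional center equation, since the $x^{2}$-coefficient $2\cos\theta(1-2a)/\sin\theta$ vanishes exactly at $a=1/2$. One applies the analysis of Section~\ref{section-semi-hyp} once, with $d=2$ (giving $x(\eta)\sim C\eta^{-1}$) or $d=3$ (giving $x(\eta)\sim C\eta^{-1/2}$); no iterated blow-up or ``second pass'' is needed at the critical value. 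The logarithmic corrections then arise concretely --- not from composing rescalings in the abstract --- by substituting this algebraic decay of $x$ into the $r$-equation and into $dt/d\eta=r^{2}(1+2\cos\theta\,x)x^{-1}$, which yields $t_{\max}-t\sim C\,\eta^{\beta}e^{-2\eta/\sin\theta}$ with $\beta$ depending on $d$; inverting that relation is what produces the $\log$ and $(\log)^{1/2}$ factors in the final rates.
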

This preceding result contains the following non-trivial features: (i) {\em existence of blow-up solutions}, (ii) {\em difference of blow-up rates among three regimes: $a\in (0,1/2)$, $a \in (1/2, 1)$ and $a = 1/2$}, and (iii) {\em discontinuous change of such blow-up rates}.
Now we consider blow-up solutions of (\ref{Andrews1}) following our strategy, which reveals the above non-trivial nature of blow-up behavior.
\par
First, we directly know the following property.
\begin{lem}
The vector field (\ref{Andrews1}) is homogeneous with order $k+1 = 3$. 
\end{lem}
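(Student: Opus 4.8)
The plan is to verify the homogeneity directly from Definition \ref{dfn-AQH} with type $\alpha=(1,1)$, for which the notion of (asymptotically) quasi-homogeneous vector field reduces to ordinary Euler homogeneity of the components of $f$. Concretely, I would check that each component $f_j$ of the right-hand side of (\ref{Andrews1}), regarded as a function of $(u_0,u_1)$, satisfies $f_j(Ru_0,Ru_1)=R^{k+\alpha_j}f_j(u_0,u_1)$ for all $R>0$ with $\alpha_j=1$; reading off the common scaling exponent then pins down $k$ and hence the order $k+1$.

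First I would substitute $(u_0,u_1)\mapsto(Ru_0,Ru_1)$ in the $u_0$-equation: the terms $\tfrac{1}{\sin\theta}u_1u_0^2$ and $\tfrac{2a\cos\theta}{\sin\theta}u_0^3$ are monomials of total degree $3$, so the whole first component scales by $R^{3}$. Next I would treat the $u_1$-equation: the term $\tfrac{a}{\sin\theta}u_0u_1^2$ has degree $3$, and for the rational term $\tfrac{1-a}{\sin\theta}\tfrac{u_0u_1^3}{u_1+2\cos\theta u_0}$ the numerator $u_0u_1^3$ has degree $4$ while the denominator $u_1+2\cos\theta u_0$ has degree $1$, so the quotient has degree $4-1=3$ as well. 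Hence $f_2(Ru_0,Ru_1)=R^{3}f_2(u_0,u_1)$, and both components are homogeneous of degree $3$. With $\alpha_j=1$ this forces $k+\alpha_j=3$, i.e. $k=2$ and order $k+1=3$.

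Finally, since (\ref{Andrews1}) is \emph{exactly} homogeneous, it coincides with its own quasi-homogeneous principal part $f_{\alpha,k}$ of type $(1,1)$ and order $3$, so the limit condition in Definition \ref{dfn-AQH} is trivially satisfied (the bracketed difference is identically zero, hence the limit is zero uniformly). I would also record that the right-hand side is smooth on the admissible set where $u_1+2\cos\theta u_0\neq 0$, which for $\theta\in(0,\pi/2)$ contains the positive quadrant and in particular a neighbourhood of the relevant portion of infinity, so the hypotheses needed for the subsequent compactification arguments hold. There is essentially no genuine obstacle here; the only point demanding a moment's care is the degree bookkeeping for the rational term, where one must remember to subtract the degree of the denominator from that of the numerator.
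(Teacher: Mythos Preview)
Your proposal is correct and matches the paper's approach: the paper simply records this lemma as a direct observation without further proof, and your explicit degree count---including the numerator-minus-denominator bookkeeping for the rational term---is exactly the verification one would perform to justify that observation.
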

Introducing the directional compactification
\begin{equation*}
u_0 = \frac{x}{r},\quad u_1 = \frac{1}{r},
\end{equation*}
we have 
\begin{align*}
r' &= \frac{-r^{-1}x}{\sin \theta}\left(a + \frac{1-a}{1+2\cos\theta x}\right),\\
x' &= \frac{x^2}{r^2\sin\theta}\left( -a - \frac{1-a}{1+2\cos\theta x} + (1-2a\cos\theta x)\right).
\end{align*}
Under the time scale desingularization
\begin{equation*}
\frac{dt}{d\tau} = r^2(1+2\cos\theta x),
\end{equation*}
we also have
\begin{equation}
\label{And-1-desing}
\begin{cases}
\displaystyle{
\dot r = \frac{-r x}{\sin \theta}\left( 1+2a\cos\theta x \right)
},& \\
\displaystyle{
\dot x = \frac{2x^2}{\sin\theta}\left\{ \cos \theta(1-2a) x -2a\cos^2\theta x^2\right\}
}, & \quad 
\displaystyle{
\dot {} = \frac{d}{d\tau}
}.
\end{cases}
\end{equation}
Note that the exponent $2$ in the above time-scale desingularization comes from the order $k+1=3$ of (\ref{Andrews1}).
The standard analysis of dynamics at infinity indicates the following observation.
\begin{prop}
Consider (\ref{And-1-desing}). Then the following statements hold.
\begin{enumerate}
\item For each $\theta\in (0,\pi/2)$, (\ref{And-1-desing}) admits the following equilibria at infinity:
\begin{equation*}
(0,0) \text{ for all $a\in (0,1)$ },\quad \left(0, \frac{1-2a}{2a\cos\theta} \right) \text{ if }a\not = \frac{1}{2}.
\end{equation*}
\item The equilibrium $(0,0)$ is linearly zero for all $a\in (0,1)$. Another equilibrium $\left(0,\frac{1-2a}{2a\cos\theta} \right)$ is a sink if $a\in (0, 1/2)$, a saddle if $a\in (1/2,1)$, in which case it is stable in $r$-direction and unstable in $x$-direction.
\end{enumerate}
\end{prop}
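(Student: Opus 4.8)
The statement splits cleanly into (1) locating the equilibria of (\ref{And-1-desing}) on the horizon $\mathcal{E}=\{r=0\}$ and (2) reading off the linear part at each, and the plan is simply to carry out those two steps. For (1) I would first note that $\mathcal{E}$ is invariant: the $r$-component of (\ref{And-1-desing}) carries an explicit factor $r$, so $\dot r\equiv 0$ on $\{r=0\}$. Hence the equilibria on $\mathcal{E}$ are exactly the zeros of the $x$-component restricted to $r=0$, i.e.\ the roots of
\begin{equation*}
\frac{2x^{2}}{\sin\theta}\bigl\{\cos\theta\,(1-2a)x-2a\cos^{2}\theta\,x^{2}\bigr\}
=\frac{2\cos\theta}{\sin\theta}\,x^{3}\bigl\{(1-2a)-2a\cos\theta\,x\bigr\}.
\end{equation*}
Since $\cos\theta\neq 0$ for $\theta\in(0,\pi/2)$, these are $x=0$ for every $a$ and, when $a\neq\tfrac12$, the extra root $x_{\ast}:=(1-2a)/(2a\cos\theta)$; when $a=\tfrac12$ the bracket becomes a nonzero multiple of $x^{2}$ and only $x=0$ survives. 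This gives part (1).

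For (2) I would compute $Dg$ for (\ref{And-1-desing}). Since the $x$-equation is independent of $r$ one has $\partial_{r}\dot x\equiv 0$, and since the $r$-equation carries the factor $r$ one has $\partial_{x}\dot r=0$ on $\mathcal{E}$; thus along the horizon $Dg$ is diagonal, with entries
\begin{equation*}
\lambda_{r}=\partial_{r}\dot r=-\frac{x\,(1+2a\cos\theta\,x)}{\sin\theta},
\qquad
\lambda_{x}=\partial_{x}\dot x=\frac{2\cos\theta}{\sin\theta}\bigl(3(1-2a)x^{2}-8a\cos\theta\,x^{3}\bigr).
\end{equation*}
At $(0,0)$ both entries vanish, so $Dg(0,0)=0$ and the origin is linearly zero for all $a\in(0,1)$. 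At $(0,x_{\ast})$ I would use the defining relation $2a\cos\theta\,x_{\ast}=1-2a$, which yields $1+2a\cos\theta\,x_{\ast}=2(1-a)$ and $3(1-2a)-8a\cos\theta\,x_{\ast}=-(1-2a)$, so that
\begin{equation*}
\lambda_{r}=-\frac{2(1-a)\,x_{\ast}}{\sin\theta},
\qquad
\lambda_{x}=-\frac{2\cos\theta\,(1-2a)\,x_{\ast}^{\,2}}{\sin\theta}.
\end{equation*}

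It then remains to track signs. On $(0,\pi/2)$ we have $\sin\theta,\cos\theta>0$, while $1-a>0$ and $x_{\ast}^{\,2}>0$, and $\sgn(x_{\ast})=\sgn(1-2a)$. Substituting these into the last display determines $\sgn\lambda_{r}$ and $\sgn\lambda_{x}$ on each of $a\in(0,\tfrac12)$ and $a\in(\tfrac12,1)$, and because $Dg$ is diagonal these are directly the stability exponents in the $r$- and $x$-directions, giving the sink/saddle dichotomy of part (2) with the stated invariant directions. Two bookkeeping points need attention but are routine: the boundary $a=\tfrac12$ must be excluded as already noted (there $x_{\ast}$ merges into the origin), and for the linearization of the desingularized field to reflect the true dynamics at infinity one should record that the orbital-equivalence factor $1+2\cos\theta\,x$ in $dt/d\tau=r^{2}(1+2\cos\theta\,x)$ is positive at both equilibria --- trivially at $(0,0)$, and at $(0,x_{\ast})$ since $x_{\ast}>-1/(2\cos\theta)$ reduces to $a<1$. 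The only step that is more than routine is the sign of $\lambda_{r}$: it is governed by $\sgn(x_{\ast})$, which reverses as $a$ crosses $\tfrac12$, and this reversal is exactly what flips the type of the equilibrium and, downstream, the blow-up regime recorded in the Fact above.
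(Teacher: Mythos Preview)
Your approach is exactly the standard one the paper has in mind (the proposition is stated without proof, with an appeal to ``the standard analysis of dynamics at infinity''), and your computation of the diagonal eigenvalues
\[
\lambda_r=-\frac{2(1-a)\,x_\ast}{\sin\theta},\qquad \lambda_x=-\frac{2\cos\theta\,(1-2a)\,x_\ast^{\,2}}{\sin\theta}
\]
is correct. The genuine gap is that you assert the sign analysis ``gives the sink/saddle dichotomy of part (2)'' without carrying it out --- and in fact it does not. For $a\in(\tfrac12,1)$ one has $x_\ast<0$ and $1-2a<0$, hence $\lambda_r>0$ \emph{and} $\lambda_x>0$: both eigenvalues are positive and $(0,x_\ast)$ is a \emph{source} for (\ref{And-1-desing}), not a saddle stable in the $r$-direction as the proposition asserts. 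Your closing remark that $\sgn\lambda_r$ reverses as $a$ crosses $\tfrac12$ is right, but so does $\sgn\lambda_x$ (through the factor $1-2a$), so both flip together and the type goes sink $\to$ source, not sink $\to$ saddle.

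In short, your computations are correct but the saddle clause of the proposition (for the system (\ref{And-1-desing}) as displayed) is not; the same inconsistency surfaces in the paper's subsequent Remark, since you have correctly verified that $1+2\cos\theta\,x_\ast=(1-a)/a>0$ for all $a\in(0,1)$, contrary to what is claimed there. The honest write-up is to record the sign computation explicitly and state that for $a>\tfrac12$ the equilibrium $(0,x_\ast)$ is a source in the $\tau$-scale. The paper's downstream analysis of that regime proceeds via the further desingularization $d\eta/d\tau=x$ and the origin rather than via $x_\ast$, so the main conclusions of the section are unaffected.
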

This proposition indicates that, for generic positive (large) initial data $(v_0(0), v_1(0))$, the solution blows up with the rate $O((t_{\max}-t)^{-1/2})$ as $t\to t_{\max}$ when $a < 1/2$.
\begin{rem}
Note that the function $r^2 (1+2\cos\theta x)$ is positive near $(0,0)$ and near $(0, (1-2a) / 2a\cos \theta)$ if $a < 1/2$.
On the other hand, if $a > 1/2$, the function $r^2 (1+2\cos\theta x)$ is negative near $(0, (1-2a) / 2a\cos \theta)$, which indicates that the original problem (\ref{Andrews1}) and the desingularized vector field (\ref{And-1-desing}) are not orbitally equivalent near the saddle $(0, (1-2a) / 2a\cos \theta)$ if $a > 1/2$.
\end{rem}
Now we move to the case $a\geq 1/2$, which is of our main interest.
The origin is linearly zero. 
Note that all terms in the vector field (\ref{And-1-desing}) can be divided by the factor $x$.
Therefore we introduce further time-scale transformation
\begin{equation}
\label{time-scale-x2eta}
\frac{d\eta}{d\tau} = x,
\end{equation}
which leads to another vector field
\begin{align}
\label{And-1-desing2-1}
\frac{dr}{d\eta} &= \frac{-r}{\sin \theta}\left( 1+2a\cos\theta x \right),\\
\label{And-1-desing2-2}
\frac{dx}{d\eta} &= \frac{x}{\sin\theta}\left\{ 2\cos\theta(1-2a)x - 4a\cos^2 \theta x^2\right\}.
\end{align}
\begin{rem}
The total time-variable desingularization
\begin{equation*}
\frac{dt}{d\eta} = \frac{dt}{d\tau}\frac{d\tau}{d\eta} = r^2(1+2\cos\theta x)x^{-1} 
\end{equation*}
becomes always positive near $(0,0)$ but $r\not =0, x> 0$, and near the saddle $(0, (1-2a) / 2a\cos \theta)$ for $a\not = 1/2$.
Then the vector field (\ref{And-1-desing2-1})-(\ref{And-1-desing2-2}) is orbitally equivalent to (\ref{Andrews1}) near equilibria on the horizon for all $a\in (0,1)$.
\end{rem}

\begin{description}
\item[Case 1 : $a\in (1/2, 1)$.]
\end{description}
In this case $1-2a < 0$ and hence the $x^2$-term in (\ref{And-1-desing2-2}) does not vanish.
The origin turns out to be semi-hyperbolic for (\ref{And-1-desing2-1})-(\ref{And-1-desing2-2}).
Now the $x$-component (\ref{And-1-desing2-2}) is a closed system in $x$.
By arguments in Section \ref{section-semi-hyp}, it turns out that the solution $x(\eta)$ of (\ref{And-1-desing2-2}) with sufficiently small positive initial data tends to zero and we have
\begin{align*}
x(\eta) &= \bar x(\eta + o(\eta))\quad \text{ as }\eta\to \infty,\\
\bar x(\eta) &= \left(\frac{2(2a-1)}{\tan\theta} \eta + \frac{1}{x_0}\right)^{-1},\quad \bar x(0) = x_0,
\end{align*}
where $\bar x$ solves 
\begin{equation*}
\frac{d\tilde x}{d\eta} = \frac{2(2a-1)}{\tan\theta}\tilde x^2,\quad \tilde x(0) = x_0.
\end{equation*}
Note that the right-hand side of the equation is the least order term of that in (\ref{And-1-desing2-2}).

Therefore (\ref{And-1-desing2-1}) becomes
\begin{equation*}
\frac{dr}{r} = -\frac{1}{\sin\theta}\left\{ 1+2a\cos\theta \left(\frac{2(2a-1)}{\tan\theta} \eta(1+o(1)) + \frac{1}{x_0}\right)^{-1} \right\}d\eta.
\end{equation*}
Integrating both sides, we have
\begin{equation*}
r(\eta) = \frac{Ce^{-\eta/\sin\theta}}{\eta + C'}.
\end{equation*}

\par
\bigskip
Now we have
\begin{equation*}
\frac{dt}{d\eta} = \frac{dt}{d\tau}\frac{d\tau}{d\eta} = r^2(1+2\cos\theta x)x^{-1} \sim Ce^{-2\eta/\sin\theta}\eta^{-1}(1+o(1))
\end{equation*}
and hence the maximal existence time is
\begin{equation*}
t_{\max} = C\int_0^\infty e^{-2\eta/\sin\theta}\eta^{-1} (1+o(1)) d\eta < \infty.
\end{equation*}
The asymptotic behavior is
\begin{equation*}
t_{\max}-t = C\int_{\eta}^\infty e^{-2\eta /\sin\theta}\eta^{-1} (1+o(1)) d\eta \sim C\eta^{-1} e^{-2\eta/\sin \theta}\quad \text{ as }\quad \eta \to \infty
\end{equation*}
and the least order asymptotics of $\eta$ is 
\begin{equation*}
\eta \sim C\ln ((t_{\max}-t)^{-1})\quad \text{ as }\quad \eta \to \infty.
\end{equation*}
We finally have
\begin{align*}
u_1 &= \frac{1}{r} \sim Ce^{\eta/\sin \theta}\eta^{-1} = C(e^{-2\eta/\sin \theta} \eta^{-1})^{-1/2} \eta^{1/2} \sim C\{(t_{\max}-t)^{-1} \log (t_{\max}-t)^{-1}\}^{1/2}, \\
u_0 &= \frac{x}{r} \sim Ce^{\eta/\sin \theta} = C(e^{-2\eta/\sin \theta} \eta^{-1})^{-1/2}\eta^{-1/2} \sim C\{(t_{\max}-t)^{-1} (\log (t_{\max}-t)^{-1})^{-1}\}^{1/2}
\end{align*}
as $t\to t_{\max}$.
\par
\bigskip
\begin{description}
\item[Case 2 : $a = 1/2$.]
\end{description}
In this case, observe that $1-2a = 0$, which indicates that the $x^2$-term in (\ref{And-1-desing2-2}) disappears.
In particular, the leading term of $x$-evolution is changed as follows:
\begin{equation*}
\frac{dr}{d\eta} = \frac{-r}{\sin \theta}\left( 1+\cos\theta x \right),\quad
\frac{dx}{d\eta} = -\frac{2\cos^2\theta}{\sin\theta}x^3.
\end{equation*}
Similar to the previous case, we obtain an explicit form of $x(\eta)$ as follows:
\begin{equation*}
x(\eta) = C'(\eta+C'')^{-1/2},\quad C', C'' >0.
\end{equation*}
Inserting $x=x(\eta)$ into the $r$-equation and applying variable separation method, we have
\begin{equation*}
r(\eta) = Ce^{-\eta/\sin \theta}e^{-(\eta + C')^{1/2}} \sim Ce^{-\eta/\sin \theta}.
\end{equation*}
Time-scale transformation satisfies $\frac{dt}{d\eta} = \frac{dt}{d\tau}\frac{d\tau}{d\eta} = r^2(1+2\cos\theta x)x^{-1}$, and hence
\begin{equation*}
t_{\max}-t = C\int_{\eta}^\infty e^{-2\eta /\sin\theta} \{ 1+ (\eta+C'')^{-1/2}\}(\eta+C'')^{1/2} d\eta \sim C\eta^{1/2} e^{-2\eta/\sin \theta}\quad \text{ as }\quad \eta \to \infty
\end{equation*}
As in the previous case, the maximal existence time $t_{\max}$ is finite.
%
We finally obtain
\begin{align*}
u_1 &= \frac{1}{r} \sim Ce^{\eta/\sin \theta} = C(e^{-2\eta/\sin \theta}\eta^{1/2})^{-1/2} \eta^{1/4} \sim C\{(t_{\max}-t)^{-1} (\log (t_{\max}-t)^{-1})^{1/2}\}^{1/2}\\
u_0 &= \frac{x}{r} \sim Ce^{\eta/\sin\theta} \eta^{-1/2} = C(e^{-2\eta/\sin\theta} \eta^{1/2})^{-1/2}\eta^{-1/4} \sim C\{(t_{\max}-t)^{-1} (\log (t_{\max}-t)^{-1})^{-1/2}\}^{1/2}
\end{align*}
as $t\to t_{\max}$.
Summarizing the above arguments, we have the following theorem.
\begin{thm}[cf. \cite{AIU, A2002}]
Consider (\ref{Andrews1}) with the initial data $(u_0(0), u_1(0))$.
\begin{enumerate}
\item If $a < 1/2$, the solution with sufficiently large positive initial data blows up at $t=t_{\max} < \infty$ with the rate $(t_{\max}-t)^{-1}$ as $t\to t_{\max}$.
\item If $a\in (1/2, 1)$ and if $u_1(0) > 0$ is sufficiently large with $u_0(0) = (1-2a)u_1(0) / (2a\cos\theta)$, then the solution blows up at $t=t_{\max}<\infty$ with the rate $(t_{\max}-t)^{-1}$ as $t\to t_{\max}$.
On the other hand, if $u_0(0) > (1-2a)u_1(0) / (2a\cos\theta)$, then the solution blows up with the following rate:
\begin{align*}
u_0 & \sim C\{(t_{\max}-t)^{-1} (\log (t_{\max}-t)^{-1})^{-1}\}^{1/2},\\
u_1 & \sim C\{(t_{\max}-t)^{-1} \log (t_{\max}-t)^{-1}\}^{1/2}
\end{align*}
as $t\to t_{\max}$.
\item If $a= 1/2$, $u_1(0) > 0$ is sufficiently large and $u_1(0) \gg u_0(0) > 0$, then the solution blows up with the following rate:
\begin{align*}
u_0 & \sim C\{(t_{\max}-t)^{-1} (\log (t_{\max}-t)^{-1})^{-1/2}\}^{1/2},\\
u_1 & \sim C\{(t_{\max}-t)^{-1} (\log (t_{\max}-t)^{-1})^{1/2}\}^{1/2}
\end{align*}
as $t\to t_{\max}$.
\end{enumerate}
\end{thm}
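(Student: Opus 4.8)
The plan is to treat the three regimes $a<1/2$, $a\in(1/2,1)$ and $a=1/2$ uniformly as instances of the Step~1--Step~4 strategy of Section~\ref{section-mechanism}, the only genuinely new ingredient being the analysis of the linearly zero equilibrium on the horizon for $a\ge 1/2$. \emph{Step~1 (common).} Using the homogeneity of order $k+1=3$, introduce the directional compactification $u_0=x/r$, $u_1=1/r$ together with the time desingularization $dt/d\tau=r^2(1+2\cos\theta x)$, obtaining the polynomial field (\ref{And-1-desing}). Its equilibria on $\mathcal{E}=\{r=0\}$ are $(0,0)$ for every $a\in(0,1)$ and $(0,x_\ast)$ with $x_\ast=(1-2a)/(2a\cos\theta)$ when $a\ne 1/2$; linearizing, $(0,x_\ast)$ is a hyperbolic sink for $a<1/2$ and a hyperbolic saddle stable in the $r$-direction for $a\in(1/2,1)$, while $(0,0)$ is linearly zero throughout. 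I would also record, as in the Remarks of this subsection, the sign of $r^2(1+2\cos\theta x)$ near each of these points, to keep track of where orbital equivalence with (\ref{Andrews1}) is available.

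For $a<1/2$ the point $(0,x_\ast)$ is a hyperbolic sink near which $r^2(1+2\cos\theta x)>0$, so Proposition~\ref{prop-stationary-blowup} applies once one checks that sufficiently large positive data land on its stable set. For this I would argue on the horizon: restricted to $\{r=0\}$ the scalar equation reads $\dot x=\tfrac{2\cos\theta(1-2a)}{\sin\theta}\,x^{3}(1-x/x_\ast)$, so $x_\ast$ is a global attractor for $x>0$ while $r$ is strictly decreasing; hence every trajectory with $r(0)>0$ small and $x(0)>0$ converges to $(0,x_\ast)$, which via $r(0)=1/u_1(0)$, $x(0)=u_0(0)/u_1(0)$ covers all sufficiently large positive $(u_0(0),u_1(0))$, and Proposition~\ref{prop-stationary-blowup} gives the type-I rate. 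The sub-case of Item~2 in which the pulled-back trajectory converges to the saddle $(0,x_\ast)$ is handled the same way.

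For $a\ge 1/2$ the origin of (\ref{And-1-desing}) is linearly zero, but every term is divisible by $x$, so the further desingularization (\ref{time-scale-x2eta}), $d\eta/d\tau=x$ (legitimate because $\{x=0\}$ is invariant, so $x(0)>0\Rightarrow x(\eta)>0$), produces (\ref{And-1-desing2-1})--(\ref{And-1-desing2-2}), for which $(0,0)$ is semi-hyperbolic: the $r$-direction is linearly stable and the $x$-equation is a closed scalar system with a degenerate zero. By the center-manifold reduction of Section~\ref{section-semi-hyp}, $x(\eta)=\bar x(\eta+o(\eta))$ as $\eta\to\infty$, with $\bar x$ solving the lowest-order truncation of (\ref{And-1-desing2-2}): this truncation is quadratic for $a\in(1/2,1)$, giving $x(\eta)\sim C\eta^{-1}$, and -- because the $x^{2}$-coefficient is proportional to $1-2a$ -- cubic for $a=1/2$, giving $x(\eta)\sim C\eta^{-1/2}$, in line with (\ref{semi-asym-final}). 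Substituting $x(\eta)$ into the linear equation (\ref{And-1-desing2-1}) and integrating gives $r(\eta)=Ce^{-\eta/\sin\theta}\rho(\eta)$ with $\rho$ a subexponential, explicitly computable correction (a power of $\eta$ for $a\in(1/2,1)$, a factor $e^{-C\sqrt{\eta}}$ for $a=1/2$). \emph{Step~4.} The total desingularization $dt/d\eta=r^{2}(1+2\cos\theta x)x^{-1}$ is positive near $(0,0)$ for $r\ne 0$, $x>0$, so the whole chain stays orbitally equivalent to (\ref{Andrews1}) there; since $1+2\cos\theta x\to 1$ near the horizon and $r^{2}$ carries the factor $e^{-2\tilde\eta/\sin\theta}$, the improper integral $t_{\max}-t=\int_{\eta}^{\infty}r^{2}(1+2\cos\theta x)x^{-1}\,d\tilde\eta$ is finite ($y$ blows up) and dominated by its lower endpoint, giving the structural identity $t_{\max}-t\sim C\,r(\eta)^{2}x(\eta)^{-1}$. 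Hence $u_1=r^{-1}\sim C(t_{\max}-t)^{-1/2}x(\eta)^{-1/2}$ and $u_0=xr^{-1}\sim C(t_{\max}-t)^{-1/2}x(\eta)^{1/2}$, so the correction $\rho$ cancels and only the polynomial asymptotics of $x(\eta)$ survive; taking logarithms yields $\eta\sim C\log((t_{\max}-t)^{-1})$, and inserting $x(\eta)\sim C\eta^{-1/(d-1)}$ with $d=2$ for $a\in(1/2,1)$ and $d=3$ for $a=1/2$ reproduces the asserted rates.

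\emph{Main obstacle.} The delicate part is the rigor of the asymptotic reductions and, above all, the propagation of the $o(\eta)$ and $o(1)$ errors from the center-manifold reduction through the integration of the $r$-equation and then through the improper integral defining $t_{\max}-t$, where an exponential competes with a slowly varying factor; one must confirm that these errors perturb only sub-leading terms, so that both $t_{\max}-t\sim C\,r(\eta)^{2}x(\eta)^{-1}$ and $\eta\sim C\log((t_{\max}-t)^{-1})$ hold to the precision claimed. A secondary but necessary bookkeeping task is to check positivity of every time-desingularizing factor on the relevant portion of the horizon, so that orbital equivalence -- hence the identification of blow-up solutions with trajectories on $W^{cs}$ -- is preserved at every step.
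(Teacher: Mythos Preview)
Your proposal is correct and follows essentially the same route as the paper: the same directional compactification and first desingularization (\ref{And-1-desing}), the same second desingularization (\ref{time-scale-x2eta}) producing the semi-hyperbolic origin for (\ref{And-1-desing2-1})--(\ref{And-1-desing2-2}), the same appeal to the Section~\ref{section-semi-hyp} asymptotics for the closed $x$-equation, and the same integration of $dt/d\eta$ to recover $t_{\max}-t$. Your packaging of the last step via the structural identity $t_{\max}-t\sim C\,r(\eta)^2 x(\eta)^{-1}$, which forces $u_1\sim (t_{\max}-t)^{-1/2}x^{-1/2}$ and $u_0\sim (t_{\max}-t)^{-1/2}x^{1/2}$ so that the sub-exponential correction $\rho$ cancels, is a clean reformulation of exactly the algebra the paper carries out explicitly (writing $u_1=r^{-1}$ as $(e^{-2\eta/\sin\theta}\cdot\eta^{\pm 1})^{-1/2}\cdot\eta^{\pm 1/2}$, etc.).
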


This result shows that our approach reveals the component-wise blow-up rates of solutions as well as the dependence of rates as a function of parameters from the viewpoint of dynamical systems.

%
%
\subsection{Andrews' example 2 : Blow-up, case study}

Consider
\begin{equation}
\label{Andrews-2}
\begin{cases}
u_1' = u_1^2(a_2 u_2 + a_3 u_3 - a_1 u_1) & \\
u_2' = u_2^2(a_3 u_3 + a_1 u_1 - a_2 u_2) & \\
u_3' = u_3^2(a_1 u_1 + a_2 u_2 - a_3 u_3) & \\
\end{cases},
\end{equation}
where $a_1,a_2,a_3$ are positive constants.
Obviously the vector field is homogeneous of order $2+1$.
In this system the following fact is known:
\begin{fact}[\cite{A2002}]
\label{fact-A2}
If $a_1 = a_2 + a_3$ and $u_1(0) > 0$, $u_2(0) = u_3(0) > 0$, then the solution blows up in a finite time $t_{\max}$ with the blow-up rate $((t_{\max}-t)^{-1}\log (t_{\max}-t)^{-1})^{1/2}$. 
\end{fact}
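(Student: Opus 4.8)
The plan is to follow the four-step strategy of Section~\ref{section-mechanism}. \emph{Step 1.} The field (\ref{Andrews-2}) is homogeneous of order $k+1=3$ (so $k=2$) and is invariant under the relabelling $(u_2,a_2)\leftrightarrow(u_3,a_3)$, so we may analyse the blow-up in the directional chart adapted to $u_2$: set $u_2=1/r$, $u_1=w_1/r$, $u_3=w_3/r$ and desingularize time by $dt/d\tau=r^2$. Since the right-hand side of (\ref{Andrews-2}) is polynomial, this yields a polynomial desingularized vector field on $\{r\ge 0\}$ for which $\{r=0\}$ (the horizon), $\{w_1=0\}=\{u_1=0\}$ and $\{w_3=0\}=\{u_3=0\}$ are invariant. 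A direct computation, using the hypothesis $a_1=a_2+a_3$, shows that besides the origin the horizon carries the equilibrium
\[
p_\ast=(r,w_1,w_3)=\Bigl(0,\,0,\,\tfrac{a_2}{a_3}\Bigr),
\]
that the origin of this chart is a source, and that the orbit issuing from $u_1(0)>0$, $u_2(0)=u_3(0)>0$ satisfies $u_1/u_2\to 0$, $u_3/u_2\to a_2/a_3$; hence it lies on $W^{cs}(p_\ast)$. (Showing that no other invariant set on the horizon captures this orbit — in particular, that it is not on the stable manifold of a hyperbolic equilibrium, which would force a type-I rate — is part of this step.)

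\emph{Steps 2--3.} The Jacobian at $p_\ast$ has eigenvalues $0,\,0,\,-a_1a_2/a_3$, so $p_\ast$ is non-hyperbolic and Section~\ref{section-type-I} does not apply; there is a two-dimensional center manifold $W^c(p_\ast)$ on which $w_3=a_2/a_3+h(r,w_1)$, $h=O(w_1)$. The center-manifold computation gives $h(r,w_1)=\frac{a_2-a_3}{a_3}w_1+o(w_1)$, and — this is exactly where $a_1=a_2+a_3$ is used decisively — the quadratic term of the $w_1$-equation cancels on $W^c(p_\ast)$, leaving the reduced dynamics $\dot w_1=-c\,w_1^{3}+o(w_1^{3})$, $\dot r=-2a_2\,rw_1+o(rw_1)$ with $c>0$. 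Since $\{w_1=0\}$ is invariant, $w_1$ stays positive along the orbit, so $d\eta/d\tau=w_1$ is an orbital equivalence near $p_\ast$; it turns the reduced system into
\[
\frac{dr}{d\eta}=-2a_2\,r+o(r),\qquad \frac{dw_1}{d\eta}=-c\,w_1^{2}+o(w_1^{2}),
\]
for which $p_\ast$ is \emph{semi-hyperbolic}. By the argument of Section~\ref{section-semi-hyp} (comparison with $\dot w=-cw^2$ and L'H\^opital's rule), $w_1(\eta)\sim (c\eta)^{-1}$ and $r(\eta)\sim C\eta^{q}e^{-2a_2\eta}$ as $\eta\to\infty$, where the algebraic factor $\eta^{q}$ (its exact exponent is immaterial) will wash out below.

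\emph{Step 4.} The total desingularization is $dt/d\eta=(dt/d\tau)(d\tau/d\eta)=r^2/w_1\sim C\eta^{2q+1}e^{-4a_2\eta}$, which is integrable at $+\infty$; hence $t_{\max}=\int_0^\infty (r^2/w_1)\,d\eta<\infty$ (finite-time blow-up) and
\[
t_{\max}-t\sim C\eta^{2q+1}e^{-4a_2\eta}\ \Longrightarrow\ \eta\sim\frac{1}{4a_2}\log(t_{\max}-t)^{-1},\qquad e^{2a_2\eta}\sim C\eta^{(2q+1)/2}(t_{\max}-t)^{-1/2}.
\]
Substituting back, $u_2=1/r\sim C\eta^{-q}e^{2a_2\eta}\sim C\eta^{1/2}(t_{\max}-t)^{-1/2}\sim C\{(t_{\max}-t)^{-1}\log(t_{\max}-t)^{-1}\}^{1/2}$, and $u_3=w_3/r$ blows up at the same rate up to a constant, while $u_1=w_1/r\sim C\eta^{-1/2}(t_{\max}-t)^{-1/2}\sim C\{(t_{\max}-t)^{-1}(\log(t_{\max}-t)^{-1})^{-1}\}^{1/2}$ blows up at a slightly slower rate; thus the dominant component blows up exactly as claimed in Fact~\ref{fact-A2}.

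\emph{Main obstacle.} The crux is Step~3: showing that the center-manifold reduction genuinely annihilates the quadratic term of the $w_1$-equation (this cancellation is what upgrades the type-I rate $(t_{\max}-t)^{-1/2}$ to the logarithmically corrected rate), and that the surviving cubic coefficient $c$ is strictly positive. A second, more global, point is to verify that the orbit with $u_2(0)=u_3(0)$ indeed converges to $p_\ast$ rather than to the origin of the chart or to another horizon equilibrium — this is precisely where the condition $u_2(0)=u_3(0)$, together with $a_1=a_2+a_3$, enters. Finally one must check that each intermediate reparametrization ($dt/d\tau=r^2$ and $d\eta/d\tau=w_1$) is by a factor positive along the orbit, so that all equivalences are legitimate and $t_{\max}$ is finite.
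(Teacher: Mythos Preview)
Your approach is correct and follows the same four--step strategy as the paper, but with two substantive differences worth noting. First, the paper does \emph{not} prove Fact~\ref{fact-A2} in full generality: it imposes the extra assumption $a_2=a_3\equiv a$ (so that $\{u_2=u_3\}$, i.e.\ $\{y=1\}$, is an \emph{invariant plane}), reducing the problem to a two--dimensional one from the outset. Your argument stays in three dimensions and relies on a genuine center--manifold reduction at $p_\ast$; this is more general but, as you correctly flag, requires you to (i) compute the quadratic term $\beta$ of $h$ in order to pin down the cubic coefficient $c=a_3(2+\beta)$ and check $c>0$, and (ii) show that the orbit with $w_3(0)=1$ actually reaches the basin of $p_\ast$ --- a nontrivial global step when $a_2\neq a_3$, since $\{u_2=u_3\}$ is then \emph{not} invariant. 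Second, the paper uses a non--standard ``successive'' compactification $u_1=1/r$, $u_2=1/(rw)$, $u_3=y/(rw)$, whereas you use the ordinary directional chart in the $u_2$--direction; the two are related by $(r_{\text{you}},w_1,w_3)=(rw,w,y)$, which is precisely the nonlinear change $(R,\bar w)=(wr,w)$ the paper introduces by hand in Case~3. Under $a_2=a_3$ the two routes therefore coincide (your $c$ becomes $2a_3=a_1$, matching the paper's $dw/d\eta=-a_1w^2$), and the Step~4 asymptotics $t_{\max}-t\sim C\eta\,e^{-4a_2\eta}$, $u_2\sim C\{(t_{\max}-t)^{-1}\log(t_{\max}-t)^{-1}\}^{1/2}$ agree exactly with the paper's.
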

Our aim here is to obtain blow-up solutions with the above blow-up rate from the dynamical systems' viewpoint.
First we apply the (homogeneous) directional compactification
\begin{equation*}
u_1 = \frac{1}{r},\quad u_2 = \frac{1}{rw},\quad u_3 = \frac{y}{rw}
\end{equation*}
to (\ref{Andrews-2}), which is different from preceding examples\footnote
{
An ordinary compactification is $(u_1, u_2, u_3) = (1/r, x/r, y/r)$, for example.
It turns out that the horizon $r=0$ is an invariant manifold of corresponding desingularized vector fields, but the vector field on the horizon does not have equilibria and solutions with $x,y > 0$ diverge in general. 
Such a divergent behavior will lead to application of {\em successive} compactifications.
}.
Then we have
\begin{align*}
-r^{-2}r' &= r^{-2}\left(\frac{a_2}{rw} + \frac{a_3 y}{rw} - \frac{a_1}{r}\right)\\
\Leftrightarrow r' &= -\frac{1}{rw}\left(a_2 + a_3 y - a_1 w\right),\\
\left(\frac{1}{rw}\right)' &= -r^{-2}w^{-1}r' - r^{-1}w^{-2}w' = \frac{1}{r^2w^2}\left(\frac{a_3 y}{rw} + \frac{a_1}{r} - \frac{a_2}{rw}\right)\\
\Leftrightarrow w' &= \frac{1}{r^2}\left(a_2 + a_3 y - a_1 w\right) - \frac{1}{r^2w}\left( a_3 y + a_1 w - a_2\right),\\
\left(\frac{y}{rw}\right)' &= -r^{-2}w^{-1}y r' - r^{-1}w^{-2}y w' + (rw)^{-1} y' = \frac{y^2}{r^2w^2}\left( \frac{a_1}{r} + \frac{a_2}{rw} - \frac{a_3 y}{rw}\right)\\
\Leftrightarrow y' &= - \frac{y}{r^2w^2}\left( a_3 y + a_1 w - a_2\right) + \frac{y^2}{r^2w^2}\left( a_1 w + a_2  -  a_3 y \right).
\end{align*}
We desingularize the vector field by introducing
\begin{equation*}
\frac{d\tau}{dt} = (rw)^{-2},
\end{equation*}
which yields
\begin{equation}
\label{Andrews-2-desing}
\begin{cases}
\dot r = -rw f_1(w,y), & \\
\dot w = w \left(w f_1(w,y) - f_2(w,y) \right), & \\
\dot y =  y \left(y f_3(w,y) - f_2(w,y) \right), & 
\end{cases}\quad \dot {}=\frac{d}{d\tau}
\end{equation}
where
\begin{equation*}
f_1(w,y) = a_2 + a_3 y - a_1w,\quad f_2(w,y) = a_3 y + a_1w - a_2,\quad f_3(w,y) = a_1w + a_2 - a_3 y.
\end{equation*}
Our concern here is equilibria on $\{r=w=0\}$.
Here we further make the following assumption:
\begin{equation}
\label{ass-Andrews-Mat}
a_2 = a_3 \equiv a > 0.
\end{equation}
Then direct calculations yield the following observation.
\begin{lem}
Assume (\ref{ass-Andrews-Mat}).
Then the set $\{y=1\}$ is an invariant subset for (\ref{Andrews-2-desing}). 
\end{lem}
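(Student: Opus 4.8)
The plan is to prove invariance by checking tangency of the vector field (\ref{Andrews-2-desing}) to the hyperplane $\{y=1\}$, without integrating the system. Since (\ref{Andrews-2-desing}) is polynomial, hence smooth, and the $r$- and $w$-components impose no constraint on $y$, the set $\{y=1\}$ is invariant if and only if $\dot y$ vanishes identically on it; equivalently, the constant function $y(\tau)\equiv 1$ must solve the third equation of (\ref{Andrews-2-desing}) for arbitrary $r(\tau),w(\tau)$, and uniqueness of solutions then confines every trajectory issued from $\{y=1\}$ to that set.

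First I would substitute $y=1$ into $\dot y = y\bigl(yf_3(w,y)-f_2(w,y)\bigr)$, obtaining $\dot y\big|_{y=1} = f_3(w,1)-f_2(w,1)$. Next I would evaluate the two functions from their definitions,
\begin{equation*}
f_2(w,1) = a_3 + a_1 w - a_2,\qquad f_3(w,1) = a_1 w + a_2 - a_3,
\end{equation*}
so that $f_3(w,1)-f_2(w,1) = 2(a_2-a_3)$; more generally one has $f_3(w,y)-f_2(w,y) = 2(a_2-a_3 y)$, which makes transparent the role of the symmetry assumption.

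Finally, under (\ref{ass-Andrews-Mat}), i.e.\ $a_2=a_3=a$, this difference is zero, hence $\dot y\equiv 0$ on $\{y=1\}$ for all $r,w$, and the invariance follows. There is essentially no obstacle here: the computation is a one-line substitution, and the only point worth flagging is that the equality $a_2=a_3$ is precisely the condition under which $f_2$ and $f_3$ agree on $\{y=1\}$ — this is the reason the assumption (\ref{ass-Andrews-Mat}) is imposed, and it is also what will later permit a reduction of (\ref{Andrews-2-desing}) to a lower-dimensional system on that invariant set. If one wanted to be thorough, one could add the parallel remark that $\{y=0\}$ is invariant as well, owing to the overall factor $y$ in $\dot y$, so that the relevant dynamics takes place in the slab $0<y\le 1$; but this is not needed for the statement.
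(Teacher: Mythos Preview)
Your proof is correct and is precisely the ``direct calculation'' the paper alludes to: substituting $y=1$ into $\dot y = y(yf_3-f_2)$ gives $f_3(w,1)-f_2(w,1)=2(a_2-a_3)$, which vanishes under (\ref{ass-Andrews-Mat}). There is nothing to add.
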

We then restrict our consideration to $\{y=1\}$ with (\ref{ass-Andrews-Mat}), which corresponds to solutions with the initial data $u_2(0) = u_3(0)$.
In this case, the desingularized vector field (\ref{Andrews-2-desing}) is reduced to the following:
\begin{equation*}
\begin{cases}
\dot r = -rw (2a - a_1w), & \\
\dot w = w \left(w (2a - a_1w) - a_1w \right) = w \left( (2a - a_1)w - a_1w^2 \right). &
\end{cases}
\end{equation*}
We further introduce the time-scale transform:
\begin{equation*}
\frac{d\eta}{d\tau} = w.
\end{equation*}
Then we have
\begin{equation}
\label{Andrews2-blowup}
\frac{dr}{d\eta} = -r (2a - a_1w),\quad
\frac{dw}{d\eta} = (2a - a_1)w - a_1w^2.
\end{equation}
The above form of vector field indicates that the origin $(0,0)$ is stable if $a_1 \geq 2a > 0$, but the asymptotic behavior is drastically changed where or not $a_1 = 2a$.
On the other hand, if $2a > a_1 > 0$, the system possess $(r,w) = (0,(2a-a_1)/a_1)$ as another meaningful equilibrium on the horizon.
\begin{description}
\item[Case 1 : $2a > a_1 > 0$.]
\end{description}
In this case, the equilibrium $(0,(2a-a_1)/a_1)$ is a sink, where eigenvalues of the linearized matrix are $\{-a_1, -2a+a_1\}$.
Consider trajectories asymptotic to $(0,(2a-a_1)/a_1)$.
In this case, we have
\begin{align*}
t_{\max} &= \int_0^\infty \frac{dt}{d\tau}d\tau = \int_0^\infty \frac{dt}{d\tau}\frac{d\tau}{d\eta} d\eta\\
	&= \int_0^\infty r(\eta)^2 w(\eta) d\eta = C\int_0^\infty e^{-2a_1\eta}(1+o(1)) d\eta < \infty.
\end{align*}
Then the corresponding solutions in the original problem (\ref{Andrews-2}) blow up in finite time.
Moreover, 
\begin{align*}
t_{\max}-t &\sim C\int_\eta^\infty e^{-2a_1\tilde \eta}(1+o(1)) d\tilde \eta \sim Ce^{-2a_1\eta}(1+o(1)) \quad \text{ as }\quad \eta \to \infty.
\end{align*}
We thus obtain
\begin{equation*}
u_1, u_2, u_3 \sim Cr^{-1} \sim Ce^{a_1 \eta} \sim C(t_{\max}-t)^{-1/2}\quad \text{ as }\quad \eta \to \infty
\end{equation*}
and the trajectories blow up with the rate $(t_{\max}-t)^{1/2}$, which indicates that the blow-up is of type-I.

\begin{description}
\item[Case 2 : $a_1 > 2a > 0$.]
\end{description}
In this case, the quasi-homogeneous component is
\begin{equation}
\label{Andrews2-blowup-lin}
\frac{dr}{d\eta} = -2ar,\quad
\frac{dw}{d\eta} = (2a - a_1)w
\end{equation}
and it is topologically conjugate to (\ref{Andrews2-blowup}) in a neighborhood of $(r,w) = (0,0)$ in $\{r,w\geq 0\}$.
The linearized system (\ref{Andrews2-blowup-lin}) is explicitly solved to obtain
\begin{equation*}
r(\eta) = r(0)e^{-2a\eta},\quad w(\eta) = w(0)e^{(2a-a_1)\eta}.
\end{equation*}
The maximal existence time in the original $t$-timescale is thus
\begin{align*}
t_{\max} &= \int_0^\infty \frac{dt}{d\tau}d\tau = \int_0^\infty \frac{dt}{d\tau}\frac{d\tau}{d\eta} d\eta\\
	&= \int_0^\infty r(\eta)^2 w(\eta) d\eta = C\int_0^\infty e^{(-2a - a_1)\eta}(1+o(1)) d\eta < \infty.
\end{align*}
It thus holds that the solution with large initial data $u_1(0) > 0$, $u_2(0) = u_3(0) > 0$ blows up in finite time.
The asymptotic behavior is obtained as follows:
\begin{align*}
t_{\max}-t &\sim C\int_\eta^\infty e^{(-2a - a_1)\tilde \eta}(1+o(1)) d\tilde \eta \sim Ce^{(-2a - a_1) \eta}(1+o(1))\quad \text{ as }\quad  \eta\to \infty.
\end{align*}
Therefore we finally have
\begin{align*}
u_1 &= \frac{1}{r} \sim Ce^{2a\eta} = C(e^{(-2a - a_1) \eta})^{\frac{2a}{-2a - a_1}} \sim C(t_{\max}-t)^{\frac{-2a}{2a + a_1}},\\
u_2 = u_3 &= \frac{1}{rw} \sim Ce^{a_1\eta} = C(e^{(-2a - a_1) \eta})^{\frac{a_1}{-2a - a_1}} \sim C(t_{\max}-t)^{\frac{-a_1}{2a + a_1}}
\end{align*}
as $t\to t_{\max}$.
\begin{description}
\item[Case 3 : $a_1 = 2a > 0$.]
\end{description}
In this case, the equilibrium $(r,w)=(0,0)$ of (\ref{Andrews2-blowup}) is a non-hyperbolic equilibrium.
\par
Here introduce a nonlinear (analytic) change of coordinate given as
$(R, \bar w) := (wr, w)$.
The corresponding vector field in this new coordinate is
\begin{equation*}
\frac{dR}{d\eta} = -a_1R,\quad \frac{dw}{d\eta} = -a_1w^2.
\end{equation*}
The solution is
\begin{equation*}
R(\eta) = R(0)e^{-a_1\eta},\quad w(\eta) = (a_1\eta + w(0)^{-1})^{-1}.
\end{equation*}
The maximal existence time of corresponding solution in the original $t$-timescale is
\begin{align*}
t_{\max} &= \int_0^\infty r(\eta)^2 w(\eta) d\eta = \int_0^\infty R(\eta)^2 w(\eta)^{-1} d\eta \\
	&= C\int_0^\infty e^{-2a_1\eta}(a_1\eta + w(0)^{-1})(1+o(1)) d\eta < \infty.
\end{align*}

The asymptotic behavior is
\begin{equation*}
t_{\max} -t \sim Ce^{-2a_1\eta}\eta \quad \text{ as }\quad \eta \to \infty.
\end{equation*}
In particular,
\begin{equation*}
\log(t_{\max} -t)^{-1} \sim C\left(\eta - \log(\eta)\right) \sim C\eta \quad \text{ as }\quad \eta \to \infty.
\end{equation*}
Then we finally have
\begin{align*}
u_1 &= \frac{1}{r} = \frac{w}{R} \sim Ce^{a_1\eta}\eta^{-1} = C(e^{-2a_1\eta}\eta)^{-1/2}\eta^{-1/2} \sim C\left\{(t_{\max}-t)^{-1}(\log(t_{\max}-t)^{-1})^{-1}\right\}^{1/2},\\
u_2 = u_3 &= \frac{1}{rw} = \frac{1}{R} \sim Ce^{a_1\eta} = C(e^{-2a_1\eta}\eta)^{-1/2}\eta^{1/2} \sim C\left\{(t_{\max}-t)^{-1}\log(t_{\max}-t)^{-1}\right\}^{1/2}
\end{align*}
as $t\to t_{\max}$.
\par
\bigskip
Summarizing the above arguments, we have the following result.
\begin{thm}[cf. \cite{AIU, A2002}]
Consider (\ref{Andrews-2}) with (\ref{ass-Andrews-Mat}) and sufficiently large initial data $u_1(0) > 0$ and $u_2(0) = u_3(0) > 0$.
Then the solution blows up at a time $t = t_{\max} < \infty$ with the following asymptotic behavior.
\begin{enumerate}
\item If $2a > a_1 > a_2=a_3\equiv a > 0$, then $u_1(t), u_2(t), u_3(t)\sim C(t_{\max}-t)^{-1/2}$ as $t\to t_{\max}$. 
\item If $ a_1 > 2a > a_2=a_3\equiv a > 0$, then $u_1(t) \sim C(t_{\max}-t)^{\frac{-2a}{2a + a_1}}$ and $u_2(t) = u_3(t)\sim C(t_{\max}-t)^{\frac{-a_1}{2a + a_1}}$ as $t\to t_{\max}$. 
\item If $2a = a_1 > a_2=a_3\equiv a > 0$, then $u_1(t) \sim C\left\{(t_{\max}-t)^{-1}(\log(t_{\max}-t)^{-1})^{-1}\right\}^{1/2}$ and $u_2(t) = u_3(t)\sim C\left\{(t_{\max}-t)^{-1}\log(t_{\max}-t)^{-1}\right\}^{1/2}$ as $t\to t_{\max}$. 
\end{enumerate}
\end{thm}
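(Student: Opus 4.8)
The plan is to follow the four-step strategy of Section~\ref{section-mechanism}. Since the right-hand side of (\ref{Andrews-2}) is homogeneous of order $3$ (so $k=2$), I would first apply the directional compactification $u_1=1/r$, $u_2=1/(rw)$, $u_3=y/(rw)$ together with the time-variable desingularization $d\tau/dt=(rw)^{-2}$, arriving at the desingularized field (\ref{Andrews-2-desing}); the asymmetric chart is essentially forced, since the naive chart $(1/r,x/r,y/r)$ leaves no equilibrium on the horizon. Under hypothesis (\ref{ass-Andrews-Mat}) the plane $\{y=1\}$ is invariant and records exactly the prescribed data $u_2(0)=u_3(0)$, so I restrict to it; a further rescaling $d\eta/d\tau=w$ (admissible because $w=u_1/u_2>0$ in the region of interest, so that the total desingularization $dt/d\eta=r^2w$ stays positive and orbital equivalence with (\ref{Andrews-2}) is preserved) yields the planar system (\ref{Andrews2-blowup}). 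The unifying observation is that the $w$-equation $\dot w=(2a-a_1)w-a_1w^2$ is autonomous and exactly solvable, while the $r$-equation is linear in $r$ once $w(\eta)$ is known; the three regimes are precisely the three signs of $2a-a_1$. Correspondingly the horizon $\{r=0\}$ always carries $(0,0)$ and, when $2a>a_1$, also the nonzero equilibrium $(0,(2a-a_1)/a_1)$; "sufficiently large initial data" is what ensures the trajectory is captured by the stable set of the relevant one.

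For the hyperbolic regimes the blow-up rate comes from Step~2. In Case~1 ($2a>a_1>a$) the trajectory is asymptotic to the hyperbolic sink $(0,(2a-a_1)/a_1)$, whose linearization has eigenvalues $\{-a_1,-(2a-a_1)\}$, so Proposition~\ref{prop-stationary-blowup} applies; explicitly $w\to(2a-a_1)/a_1$ and $r(\eta)\sim Ce^{-a_1\eta}$ by integrating the (linear-in-$r$) $r$-equation, whence $dt/d\eta=r^2w\sim Ce^{-2a_1\eta}$, $t_{\max}-t\sim Ce^{-2a_1\eta}$, and every $u_i\sim Cr^{-1}\sim C(t_{\max}-t)^{-1/2}$. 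In Case~2 ($a_1>2a>a$) the origin of (\ref{Andrews2-blowup}) is a hyperbolic stable node with linearization (\ref{Andrews2-blowup-lin}); solving the Bernoulli $w$-equation ($w(\eta)\sim Ce^{(2a-a_1)\eta}\to0$) and then the linear $r$-equation ($r(\eta)\sim Ce^{-2a\eta}$) gives $dt/d\eta=r^2w\sim Ce^{-(2a+a_1)\eta}$, hence $t_{\max}-t\sim Ce^{-(2a+a_1)\eta}$; inverting and substituting into $u_1=1/r$ and $u_2=u_3=1/(rw)$ yields the exponents $-2a/(2a+a_1)$ and $-a_1/(2a+a_1)$, which are component-wise different from type-I because the pull-back $dt/d\eta$ combines two distinct eigenvalue-rates.

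Case~3 ($a_1=2a$) is the critical regime and the hard part: the origin of (\ref{Andrews2-blowup}) is then semi-hyperbolic (one zero eigenvalue), so linearization cannot produce the rate. The key step is the analytic change of coordinates $R:=wr$ (keeping $w$), under which (\ref{Andrews2-blowup}) decouples \emph{exactly} into $\dot R=-a_1R$ and $\dot w=-a_1w^2$; the $\eta^{-1}$ decay of $w$ is exactly the center-direction behavior predicted by the reduction in Section~\ref{section-semi-hyp} with $d=2$. Then $R(\eta)=Ce^{-a_1\eta}$, $w(\eta)\sim C\eta^{-1}$, so $dt/d\eta=r^2w=R^2/w\sim Ce^{-2a_1\eta}\eta$, giving $t_{\max}-t\sim Ce^{-2a_1\eta}\eta$ and $\log(t_{\max}-t)^{-1}\sim C\eta$; expressing $u_1=w/R$ and $u_2=u_3=1/R$ in terms of $e^{-2a_1\eta}\eta$ then produces the logarithmic corrections in the statement and recovers Fact~\ref{fact-A2}. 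Beyond locating this coordinate change, the genuinely delicate points, shared by all three cases, are (i) the positivity checks for the successive time-desingularizations, needed to keep orbital equivalence with the original flow near each horizon equilibrium, and (ii) the asymptotic evaluation of the tail integrals $t_{\max}-t=\int_\eta^\infty(dt/d\tilde\eta)\,d\tilde\eta$, which I would justify by L'H\^opital's rule as in (\ref{ex_LHopital}), absorbing the discarded $o(\,\cdot\,)$ terms via the argument of Section~\ref{section-semi-hyp}.
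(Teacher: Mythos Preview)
Your proposal is correct and follows essentially the same route as the paper: the same asymmetric directional chart and two-stage time desingularization leading to (\ref{Andrews2-blowup}), the same trichotomy on the sign of $2a-a_1$, and in the critical case the same analytic change $R=wr$ that decouples the system exactly. Your additional remarks on why the naive chart fails, on the positivity of the cumulative desingularization $dt/d\eta=r^2w$, and on handling the tail integrals via the L'H\^{o}pital argument of Section~\ref{section-semi-hyp} match the paper's treatment.
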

The third statement actually corresponds to Fact \ref{fact-A2} with the assumption (\ref{ass-Andrews-Mat}).

\begin{rem}
This example exhibits very interesting features of blow-ups both from viewpoints of dynamical systems with compactifications and blow-up solutions themselves.
Firstly, typical applications of compactifications show that invariant sets on the horizon cannot be found in general.
We have thus applied the second compactification to completely capturing blow-up behavior of systems.
Secondly, our analysis also shows that the blow-up rate depends not only on order of polynomials, but also on coefficients of original vector fields.
This feature is considered to stem from the necessity of {\em successive compactifications} mentioned above.
\end{rem}

\begin{rem}
During the present studies, one obtains an observation of blow-up behavior exhibiting the rate different from type-I.
In all examples above, blow-up rates are {\em different from each other among components}, which cannot be determined by the type $\alpha$ of vector fields at a glance.
This aspect of results depends on the system and determined by explicit calculations of dynamics at infinity.
We also see this aspect in the following examples, which indicates that it will be common in a large class of finite-time singularities.
\end{rem}

%
%


\subsection{Quenching traveling wave solutions}
The next example is the heat equation possessing reaction term with negative fractional powers:
\begin{equation}
\label{heat}
u_t = u_{xx} + (1-u)^{-\alpha},\quad t>0,\ x\in \mathbb{R},
\end{equation}
where $\alpha \geq 1$.

\begin{dfn}\rm
We say that a solution $u(t,x)$ of (\ref{heat}) {\em quenches} at a point $(T,x_0)$ if 
\begin{equation*}
\lim_{t\to T}u(t,x_0) = 0,\quad \lim_{t\to T}\frac{\partial u}{\partial t}(t,x_0) = -\infty.
\end{equation*}
\end{dfn}

\begin{fact}[\cite{K1975}]
Consider the initial-boundary value problem of (\ref{heat}) on $x\in (0,l)$ for some $l>0$ instead of $x\in \mathbb{R}$, with
\begin{equation*}
u(t,0) = u(t,l) = 0,\quad t>0,\quad \quad u(0,x) = 0,\quad x\in (0,l).
\end{equation*}
If $l > 2\sqrt{2}$, then the solution quenches.
\end{fact}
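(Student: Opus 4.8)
The statement is Kawarada's classical quenching theorem, and the cleanest route combines a one-sided comparison — which forces a finite-time singularity — with the standard description of the breakdown. Throughout, ``quenching'' for (\ref{heat}) means that the solution reaches the singular value of the reaction term in finite time, i.e. (after the substitution $v:=1-u$, which casts the situation in the form of the definition) $v(t,x_0)\to0$ and $v_t(t,x_0)\to-\infty$ as $t\to T$. I would first record the basic picture. Since $s\mapsto(1-s)^{-\alpha}$ is smooth and positive on $\{s<1\}$, the problem has a unique classical solution on a maximal interval $[0,T^\ast)$, and necessarily $0\le u<1$ there ($u\ge0$ since the constant $0$ is a subsolution, and $u<1$ because $u=1$ is the only obstruction to continuation). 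Differentiating the equation in $t$, the function $v:=u_t$ solves the linear parabolic equation $v_t=v_{xx}+\alpha(1-u)^{-\alpha-1}v$ with $v=0$ on the lateral boundary and $v\to1>0$ in the interior as $t\to0^+$; the maximum principle gives $v\ge0$, so $u$ is nondecreasing in $t$ and $\max_x u(t,\cdot)$ is nondecreasing. Everything thus reduces to proving $T^\ast<\infty$ when $l>2\sqrt2$ and then identifying the singularity.

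For the finite-time breakdown I would exploit that $u\ge0$ forces $(1-u)^{-\alpha}\ge1$ for every $\alpha\ge1$, so $u$ dominates the solution $\bar u$ of the \emph{linear} problem $\bar u_t=\bar u_{xx}+1$ with the same zero initial and boundary data: the difference $w=u-\bar u$ satisfies $w_t-w_{xx}=(1-u)^{-\alpha}-1\ge0$ with vanishing parabolic-boundary data, hence $w\ge0$ on $[0,T^\ast)$ by the maximum principle. The linear problem is explicit: $\bar u$ increases in $t$ to the stationary solution $\tfrac12 x(l-x)$, whose maximum, attained at $x=l/2$, equals $l^2/8$. If $l>2\sqrt2$, i.e. $l^2/8>1$, then $\bar u(t,l/2)\nearrow l^2/8>1$, so there is a finite $t_1$ with $\bar u(t_1,l/2)=1$; were $T^\ast>t_1$ we would obtain $u(t_1,l/2)\ge\bar u(t_1,l/2)=1$, contradicting $u<1$ on $[0,T^\ast)$. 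Hence $T^\ast\le t_1<\infty$.

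It then remains to upgrade ``$T^\ast<\infty$'' to genuine quenching. If $\max_x u(t,\cdot)$ stayed below $1-\delta$ up to $T^\ast$, the nonlinearity would be bounded and parabolic regularity would continue the solution past $T^\ast$, contradicting maximality; by the monotonicity noted above, $\max_x u(t,x)\to1$. Uniqueness forces the spatial symmetry $u(t,x)=u(t,l-x)$, and a symmetry/concavity argument of the type in \cite{K1975} shows the maximum sits at $x=l/2$ for all $t$, so $u(t,l/2)\to1$. For the derivative blow-up I would run a Friedman--McLeod-type comparison: for a suitably small $\varepsilon>0$ and $\beta\in(0,1)$ the function $J=u_t-\varepsilon(1-u)^{-\beta}$ satisfies $J_t-J_{xx}\ge c(t,x)\,J$ with $J\ge0$ on the parabolic boundary (using $u_t\ge0$ and $u_t\to1$ initially), so $J\ge0$, whence $u_t(t,l/2)\ge\varepsilon(1-u(t,l/2))^{-\beta}\to+\infty$; this is quenching.

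The genuinely delicate step is the last one — localizing the quenching set to $x=l/2$ and establishing the $u_t\to+\infty$ rate — which relies on the symmetry/concavity structure of the solution and on a carefully tuned auxiliary function; these are precisely the ingredients for which one leans on \cite{K1975}. By contrast the finite-time breakdown is elementary and already pins down the threshold $2\sqrt2$. A robust alternative to the comparison step tests the equation against the first Dirichlet eigenfunction $\phi_1$ of $-\partial_{xx}$ on $(0,l)$ normalized to unit mass: Jensen's inequality applied to the convex function $s\mapsto(1-s)^{-\alpha}$ yields $\tfrac{d}{dt}\int_0^l u\phi_1\,dx\ge-\lambda_1\int_0^l u\phi_1\,dx+\bigl(1-\int_0^l u\phi_1\,dx\bigr)^{-\alpha}$, and the right-hand side stays positive on $[0,1)$ once $\lambda_1=(\pi/l)^2$ is small enough, forcing $\int_0^l u\phi_1\,dx\to1$ in finite time — although this produces a different length threshold.
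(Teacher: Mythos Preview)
The paper does not prove this Fact; it is simply quoted from Kawarada \cite{K1975} as background for the ensuing discussion of quenching traveling waves, so there is no in-paper argument to compare against.

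Your route is essentially Kawarada's original one: the comparison with the linear subsolution $\bar u$ solving $\bar u_t=\bar u_{xx}+1$ (using $(1-u)^{-\alpha}\ge 1$) is precisely how the threshold $l=2\sqrt{2}$ emerges, via the maximum $l^2/8$ of the steady state $\tfrac12 x(l-x)$. That part, together with the monotonicity $u_t\ge 0$ and the continuation argument forcing $\sup_x u\to 1$ at $T^\ast$, is correct.

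One concrete slip in the last step: you assert $J=u_t-\varepsilon(1-u)^{-\beta}\ge 0$ on the parabolic boundary, but on the lateral sides $x=0,l$ one has $u=0$ and $u_t=0$, so $J=-\varepsilon<0$ there. The usual repair is to insert a spatial weight vanishing on $\partial(0,l)$ (for instance $J=u_t-\varepsilon\,\phi_1(x)(1-u)^{-\beta}$ with $\phi_1$ the first Dirichlet eigenfunction) or to restrict to a compact subinterval and start from a small positive time where $u_t$ has a positive interior lower bound. You already label this step ``delicate'' and defer to the reference, which is appropriate, but the boundary claim as stated is false and should not be left as written.
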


We pay our attention to the traveling wave solution $u(t,x) = \phi(x-ct) \equiv \phi(\xi)$ with $c\in\mathbb{R}\setminus \{0\}$, which should satisfy the following ordinary differential equation:
\begin{equation}
\label{heat-tw}
-cu' = u'' + u^{-\alpha} \quad \Leftrightarrow \quad 
\begin{cases}
\phi' = \psi, & \\
\psi' = -c\psi - \phi^{-\alpha}. & 
\end{cases}
\end{equation}

Since the system (\ref{heat-tw}) is singular at $\phi = 0$, we desingularize the vector field via the time-scale desingularization
\begin{equation*}
\frac{ds}{d\xi} = \phi(\xi)^{-\alpha},
\end{equation*}
which yields
\begin{equation}
\label{heat-tw-desing}
\begin{cases}
\dot \phi = \phi^{\alpha}\psi, & \\
\dot \psi = -c\phi^{\alpha}\psi - 1, & 
\end{cases}\quad \dot {} = \frac{d}{ds}.
\end{equation}
Numerical simulations with Poincar\'{e}-type compactifications (e.g., \cite{Mat}) implies that several initial data induce solutions of (\ref{heat-tw}) such that $\psi \to -\infty$ as $\xi\to \xi_{\max} < \infty$.
With this observation in mind, we apply the following directional compactification
\begin{equation*}
\phi = \frac{x}{\lambda},\quad \psi = -\frac{1}{\lambda} 
\end{equation*}
to the vector field for $(\lambda, x)$ associated with (\ref{heat-tw-desing}):
\begin{equation*}
\begin{cases}
\dot \lambda = cx^\alpha \lambda^{1-\alpha} - \lambda^2, & \\
\dot \psi = x(cx^\alpha \lambda^{-\alpha} - \lambda) - x^\alpha \lambda^{-\alpha}. & 
\end{cases}
\end{equation*}
Using the time-scale desingularization
\begin{equation*}
\frac{d\tau}{ds} = \lambda(s)^{-\alpha},
\end{equation*}
we have the desingularized vector field
\begin{equation}
\label{quench-desing1}
\begin{cases}
\frac{d \lambda}{d\tau} = cx^\alpha \lambda - \lambda^{2+\alpha}, & \\
\frac{d x}{d\tau} = x(cx^\alpha - \lambda^{1+\alpha}) - x^\alpha. & 
\end{cases}
\end{equation}

\subsubsection{The case $\alpha > 1$}
Equilibria at infinity should satisfy $\lambda = 0$.
We thus obtain the following equilibria at infinity: $(\lambda, x) = (0,0)\equiv p_0, (0,c^{-1})\equiv p_c$.
The Jacobian matrices at these equilibria are
\begin{align*}
J(\lambda, x) &= \begin{pmatrix}
cx^\alpha - (2+\alpha)\lambda^{1+\alpha} & \alpha c \lambda x^{\alpha-1}\\
-(1+\alpha)x\lambda^\alpha & (\alpha+1) c x^\alpha - \lambda^{1+\alpha} - \alpha x^{\alpha-1}
\end{pmatrix}\\
&= \begin{cases}
\begin{pmatrix}
0 & 0 \\
0 & 0
\end{pmatrix}
 & \text{ at $p_0$},\\
\begin{pmatrix}
c^{1-\alpha} & 0 \\
0 & c^{1-\alpha}
\end{pmatrix}
 & \text{ at $p_c$}.
\end{cases}
\end{align*}
When $c > 0$, then $p_c$ is source.
If $c < 0$, the value of $u$ corresponding to $p_c$ may attain negative value, which is not compatible in this case since $u$ is referred to as a temperature in a given environment.
Therefore, we focus on the equilibrium $p_0$.
\par
Since $p_0$ is non-hyperbolic, we desingularize $p_0$ via the following blow-up map:
\begin{equation*}
\Phi(r,\bar \lambda, \bar x) \equiv (\lambda, x),\quad \lambda := r^{\alpha-1}\bar \lambda,\  x := r^{\alpha+1}\bar x.
\end{equation*}
Restricting the transformed vector field of (\ref{heat-tw-desing}) via $\Phi$ to $\bar \lambda = 1$, we have
\begin{align*}
\frac{d r}{d\tau} &= \frac{r}{\alpha-1}(cr^{\alpha (\alpha+1)} \bar x^{\alpha} - r^{\alpha^2-1}),\\
\frac{d \bar x}{d\tau} &= -\frac{\alpha+1}{\alpha-1}\bar x (cr^{\alpha (\alpha+1)} \bar x^{\alpha} - r^{(\alpha-1)(\alpha+1)}) + cr^{\alpha (\alpha+1)}\bar x^{\alpha+1} - r^{\alpha^2 -1} \bar x - r^{\alpha^2 - 1}\bar x^{\alpha}.
\end{align*}
Using the time-scale desingularization
\begin{equation*}
\frac{d \eta}{d\tau} = r(\tau)^{\alpha^2-1},
\end{equation*}
we have
\begin{align}
\notag
\frac{d r}{d\eta} &= \frac{r}{\alpha-1}(cr^{\alpha+1} \bar x^{\alpha} - 1),\\
\label{heat-blowup-desing}
\frac{d \bar x}{d\eta} &= -\frac{\alpha+1}{\alpha-1}\bar x (cr^{\alpha+1} \bar x^{\alpha} - 1) + cr^{\alpha+1}\bar x^{\alpha+1} - \bar x - \bar x^{\alpha}.
\end{align}
The system (\ref{heat-blowup-desing}) has two equilibria whose images under $\Phi$ are $p_0$:
\begin{equation*}
\bar p_0 = (0,0)\quad \text{ and }\quad \bar p_\alpha = \left(0, \left(\frac{2}{\alpha-1}\right)^{1/(\alpha-1)} \right).
\end{equation*}
The Jacobian matrices of at these equilibria are
\begin{align*}
J(r, \bar x) &= \begin{pmatrix}
\frac{\alpha+2}{\alpha-1}cr^{\alpha+1}\bar x^{\alpha} - \frac{1}{\alpha-1} & \frac{\alpha}{\alpha-1}cr^{\alpha+2}\bar x^{\alpha-1}\\
-\frac{(\alpha+1)^2}{\alpha-1}cr^\alpha \bar x^{\alpha+1} + (\alpha+1)cr^\alpha \bar x^{\alpha+1} & -\frac{(\alpha+1)^2}{\alpha-1} cr^{\alpha+1} \bar x^\alpha + \frac{\alpha+1}{\alpha-1} + (\alpha+1)cr^{\alpha+1}\bar x^{\alpha} - 1 - \alpha \bar x^{\alpha-1}
\end{pmatrix}\\
&= \begin{cases}
\begin{pmatrix}
- \frac{1}{\alpha-1} & 0 \\
0 & \frac{2}{\alpha-1}
\end{pmatrix}
 & \text{ at $\bar p_0$},\\
\begin{pmatrix}
- \frac{1}{\alpha-1} & 0 \\
0 & -2
\end{pmatrix}
 & \text{ at $\bar p_\alpha$}.
\end{cases}
\end{align*}
These calculations show that $\bar p_0$ is a saddle and $\bar p_\alpha$ is a sink.
Trajectories in a small neighborhood of $r=0$ thus generically tend to a sink $\bar p_\alpha$, which have asymptotic expansions
\begin{equation*}
r(\eta) = Ce^{-\eta/(\alpha-1)}(1+ o(1)),\quad \bar x(\eta) - \left(\frac{2}{\alpha-1}\right)^{1/(\alpha-1)} = Ce^{-2\eta}(1+o(1)).
\end{equation*}
Note that time scales $\xi$ (original) and $\eta$ (transformed via $\Phi$) have the following relationship:
\begin{align*}
\frac{d\eta}{d\xi} &= \frac{ds}{d\xi}\frac{d\tau}{ds}\frac{d\eta}{d\tau} = \phi^{-\alpha} \lambda^{-\alpha} r^{\alpha^2-1}\\
	&=\left(\frac{r^{\alpha+1}\bar x}{r^{\alpha-1}}\right)^{-\alpha} (r^{\alpha-1})^{-\alpha} r^{\alpha^2-1}\\
	&= r^{-(\alpha+1)}\bar x^{-\alpha}\\
	&= Ce^{(\frac{\alpha+1}{\alpha-1} + 2\alpha)\eta}.
\end{align*}
Therefore the maximal existence time is finite since
\begin{equation*}
\xi_{\max} = C\int_0^\infty  e^{-(\frac{\alpha+1}{\alpha-1} + 2\alpha)\eta}(1+o(1))d\eta < \infty.
\end{equation*}
In particular, we have the following asymptotics:
\begin{equation*}
\xi_{\max} - \xi \sim Ce^{-(\frac{\alpha+1}{\alpha-1} + 2\alpha)\eta}\quad \text{ as }\eta\to \infty.
\end{equation*}
Finally, we have the following asymptotic behavior of $(\phi, \psi)$:
\begin{align*}
\phi(\xi) &= \frac{r^{\alpha+1}\bar x}{r^{\alpha-1}} = r^2 \bar x \sim Ce^{-\frac{2\alpha}{\alpha-1}\eta} \sim C(\xi_{\max}-\xi)^{\frac{2\alpha}{2\alpha^2-\alpha+1}}\\
	&\to 0\quad \text{ as }\quad\xi \to \xi_{\max},\\
\psi(\xi) &= -\frac{1}{r^{\alpha-1}} \sim -C(e^{-\eta/(\alpha-1)})^{-(\alpha-1)} = -Ce^\eta \sim -C(\xi_{\max}-\xi)^{\frac{1-\alpha}{2\alpha^2-\alpha+1}}\\
	&\to -\infty\quad \text{ as }\quad\xi \to \xi_{\max}.
\end{align*}

\begin{thm}
Assume that $\alpha > 1$ with $\alpha\in \mathbb{N}$ in (\ref{heat}).
The quenching traveling wave solutions for (\ref{heat}) is, if exists, characterized by trajectories whose initial data are on the stable manifold $W^s(\bar p_\alpha)$ for (\ref{heat-blowup-desing}).
The quenching rates, namely the extinction rate of $\phi$ and blow-up rate of $\psi$, are
\begin{equation*}
\phi(\xi) \sim C(\xi_{\max}-\xi)^{\frac{2\alpha}{2\alpha^2-\alpha+1}},\quad \psi(\xi) = -C(\xi_{\max}-\xi)^{\frac{1-\alpha}{2\alpha^2-\alpha+1}} \quad \text{ as }\quad \xi \to \xi_{\max}
\end{equation*}
with $C>0$.
\end{thm}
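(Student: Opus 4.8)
The plan is to read the statement off the chain of desingularizations, the directional compactification, and the weighted blow-up $\Phi$ already assembled above, combined with the linearization at the sink $\bar p_\alpha$ of (\ref{heat-blowup-desing}). First I would fix the characterization. By definition a traveling wave $\phi$ quenches at $\xi_{\max}$ iff $\phi(\xi)\to 0$ and $\psi(\xi)=\phi'(\xi)\to -\infty$ as $\xi\to\xi_{\max}$; under the substitution $\phi=x/\lambda$, $\psi=-1/\lambda$ this is precisely the statement that the lifted trajectory of (\ref{quench-desing1}) approaches the horizon $\{\lambda=0\}$. The equilibria-at-infinity computation above shows that the only equilibria with $\lambda=0$ are $p_c$ (a source when $c>0$) and the linearly zero point $p_0$, and that after $\Phi$ and the desingularization $d\eta/d\tau=r^{\alpha^2-1}$ the point $p_0$ resolves into the saddle $\bar p_0$ and the sink $\bar p_\alpha$. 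Since a quenching trajectory cannot accumulate on the source $p_c$ and the stable set of $\bar p_0$ is one-dimensional, a quenching trajectory --- if one exists --- generically lies, in the blow-up chart $\bar\lambda=1$, in the stable manifold $W^s(\bar p_\alpha)$ (an open set, since $\bar p_\alpha$ is a sink of a planar system); conversely, every trajectory with initial data in $W^s(\bar p_\alpha)$ descends through $\Phi$ and the reparametrizations to a solution of (\ref{heat-tw}) with $\phi\to 0^+$ and $\psi\to-\infty$. This establishes the first assertion.

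Next I would compute the rates. At $\bar p_\alpha$ the Jacobian of (\ref{heat-blowup-desing}) is diagonal with eigenvalues $-1/(\alpha-1)$ and $-2$, so the smooth stable-manifold expansion (equivalently Hartman--Grobman applied on $\{r\geq 0\}$) gives, along the trajectory, $r(\eta)=Ce^{-\eta/(\alpha-1)}(1+o(1))$ and $\bar x(\eta)\to(2/(\alpha-1))^{1/(\alpha-1)}$ as $\eta\to\infty$. Substituting into the composed time-scale relation
\begin{equation*}
\frac{d\eta}{d\xi}=\phi^{-\alpha}\lambda^{-\alpha}r^{\alpha^2-1}=r^{-(\alpha+1)}\bar x^{-\alpha}=Ce^{\left(\frac{\alpha+1}{\alpha-1}+2\alpha\right)\eta}(1+o(1))
\end{equation*}
and integrating gives $\xi_{\max}=C\int_0^\infty e^{-\left(\frac{\alpha+1}{\alpha-1}+2\alpha\right)\eta}(1+o(1))\,d\eta<\infty$, so the wave reaches $\xi_{\max}$ after a finite $\xi$-interval, and $\xi_{\max}-\xi\sim Ce^{-\left(\frac{\alpha+1}{\alpha-1}+2\alpha\right)\eta}$ as $\eta\to\infty$. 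Finally, writing $\phi=r^2\bar x$ and $\psi=-r^{-(\alpha-1)}$, inverting the last asymptotic to express $e^{-\eta}$ as a power of $\xi_{\max}-\xi$, and feeding back $r\sim Ce^{-\eta/(\alpha-1)}$, one reads off the claimed powers of $\xi_{\max}-\xi$ for $\phi(\xi)$ and $\psi(\xi)$ with positive constant $C$. Here the hypothesis $\alpha\in\mathbb{N}$, $\alpha\geq 2$, is used only to keep every exponent appearing in $\Phi$ and in the three desingularizations integral, so that the blown-up vector field is polynomial and $\bar p_0,\bar p_\alpha$ are genuine equilibria.

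The step I expect to be delicate is the characterization rather than the rate arithmetic. One must check that each reparametrization in the chain $\xi\mapsto s\mapsto\tau\mapsto\eta$ is an \emph{orbital} equivalence on the relevant region, i.e. that the scaling factors $\phi^{-\alpha}$, $\lambda^{-\alpha}$ and $r^{\alpha^2-1}$ stay strictly positive there; this holds because $\phi,\lambda,r>0$ on the physically relevant part of the phase space, but it must be stated, since (as in the Andrews example above) a sign change of such a factor would destroy the correspondence near an equilibrium. One must also confirm that no invariant set at infinity other than $p_c$, $\bar p_0$, $\bar p_\alpha$ can be an $\omega$-limit set of a quenching trajectory --- which is exactly what the Jacobian computations above provide --- and that the quenching trajectory enters the chart $\bar\lambda=1$, which is automatic since $\lambda=r^{\alpha-1}\to 0^+$ is the direction in which $\lambda$ vanishes there. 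With these in hand, the word ``generically'' in the first assertion and the hedge ``if exists'' (the paper asserts only that any quenching traveling wave is governed by $W^s(\bar p_\alpha)$, not that one exists) are the only remaining things to phrase carefully.
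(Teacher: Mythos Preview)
Your argument tracks the paper's derivation line for line: the same directional compactification, the same weighted blow-up $\Phi$ restricted to the chart $\bar\lambda=1$, the same linearization at the sink $\bar p_\alpha$, and the same composite time-scale relation $d\eta/d\xi=\phi^{-\alpha}\lambda^{-\alpha}r^{\alpha^2-1}=r^{-(\alpha+1)}\bar x^{-\alpha}$. In that sense the approach is identical to the paper's, and your remarks about checking positivity of the desingularizing factors and about the ``if exists'' hedge are well placed.

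There is, however, an arithmetical slip in the step
\[
r^{-(\alpha+1)}\bar x^{-\alpha}=Ce^{\left(\frac{\alpha+1}{\alpha-1}+2\alpha\right)\eta}(1+o(1)),
\]
and the paper itself contains the same slip. You have just recorded that along $W^s(\bar p_\alpha)$ one has $\bar x(\eta)\to\bigl(\tfrac{2}{\alpha-1}\bigr)^{1/(\alpha-1)}$, a \emph{positive constant}; hence $\bar x^{-\alpha}$ contributes only a bounded factor, not $e^{2\alpha\eta}$ (that would require $\bar x\sim e^{-2\eta}$, which is the decay of $\bar x-\bar x_\ast$, not of $\bar x$). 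The correct asymptotic is $d\eta/d\xi\sim Ce^{\frac{\alpha+1}{\alpha-1}\eta}$, giving $\xi_{\max}-\xi\sim Ce^{-\frac{\alpha+1}{\alpha-1}\eta}$ and therefore $\phi=r^2\bar x\sim C(\xi_{\max}-\xi)^{2/(\alpha+1)}$, $\psi=-r^{-(\alpha-1)}\sim -C(\xi_{\max}-\xi)^{-(\alpha-1)/(\alpha+1)}$. These exponents are exactly those forced by the dominant balance $\phi''\sim-\phi^{-\alpha}$ read directly off (\ref{heat-tw}); the exponents printed in the theorem do not survive that check. So your proposal faithfully reproduces the paper's method but inherits this error via the unexplained $+2\alpha$ in the exponent.
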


\subsubsection{The case $\alpha = 1$}
If $\alpha = 1$, then the scenario becomes completely different.
First, for the desingularized vector field (\ref{quench-desing1}) in the $\tau$-time scale, we have the following.
\begin{lem}
The desingularized vector field (\ref{quench-desing1}) with $\alpha = 1$ admit equilibria $(\lambda, x) = (0,0)\equiv p_0$, $(0,c^{-1})\equiv p_c$,
where the Jacobian matrix is
\begin{equation*}
J(\lambda, x) = \begin{pmatrix}
cx - 3\lambda^2 & c \lambda \\
-2x \lambda & 2c x - \lambda^2 - 1
\end{pmatrix}
= \begin{cases}
\begin{pmatrix}
0 & 0 \\
0 & -1
\end{pmatrix}
 & \text{ at $p_0$},\\
\begin{pmatrix}
1 & 0 \\
0 & 1
\end{pmatrix}
 & \text{ at $p_c$}.
\end{cases}
\end{equation*}
In other words, $p_0$ is semi-hyperbolic and $p_c$ is a source no matter what the value $c$ is.
\end{lem}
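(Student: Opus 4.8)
The statement is a direct computation around two explicit equilibria, so the plan is simply to carry it out in order. First I would set $\alpha=1$ in the desingularized vector field (\ref{quench-desing1}), which becomes
\begin{equation*}
\frac{d\lambda}{d\tau} = cx\lambda - \lambda^3,\qquad \frac{dx}{d\tau} = cx^2 - x\lambda^2 - x,
\end{equation*}
and then locate the equilibria on the set $\{\lambda=0\}$, which is the horizon. Putting $\lambda=0$ in the second equation gives $x(cx-1)=0$; since $c\neq 0$ this produces exactly the two points $p_0=(0,0)$ and $p_c=(0,c^{-1})$, and checking that no equilibrium with $\lambda\neq 0$ can lie on $\{\lambda=0\}$ is vacuous. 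So the two equilibria in the statement are the complete list on the horizon.

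Next I would differentiate the right-hand side to obtain the Jacobian at a general point,
\begin{equation*}
J(\lambda,x)=\begin{pmatrix} cx-3\lambda^2 & c\lambda \\ -2x\lambda & 2cx-\lambda^2-1 \end{pmatrix},
\end{equation*}
matching the displayed matrix, and evaluate it at the two equilibria. At $p_0$ every entry except the $(2,2)$ one vanishes and the $(2,2)$ entry equals $-1$, so $J(p_0)=\operatorname{diag}(0,-1)$, whose spectrum $\{0,-1\}$ has exactly one zero eigenvalue; hence $p_0$ is semi-hyperbolic. At $p_c$ one has $cx=1$ and $\lambda=0$, so $J(p_c)$ is the $2\times 2$ identity matrix, whose only eigenvalue is $1$ (with algebraic multiplicity two); both eigenvalues are positive, so $p_c$ is a source. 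Crucially, the identity $J(p_c)=\mathrm{Id}$ does not involve $c$ at all (only the standing hypothesis $c\neq 0$ enters, through the very existence of $p_c$), which is exactly the asserted independence of the value of $c$.

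There is no genuine obstacle in this lemma: it is a bookkeeping computation of a Jacobian at two points. The only two places worth a sentence of care are (i) that $c\neq 0$ is needed both for $p_c$ to be defined and for it to be distinct from $p_0$, and (ii) that, while the spectral classification of $p_c$ holds for every $c\neq 0$, only $c>0$ is relevant for the quenching application since otherwise $x=c^{-1}<0$ would force a nonphysical sign of $u$, as noted immediately after the statement. Once these are recorded, the lemma is precisely the content of the two displayed matrix evaluations.
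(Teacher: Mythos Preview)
Your proposal is correct and is precisely the direct computation the paper leaves implicit: the lemma is stated without proof, as the equilibria and Jacobian evaluations follow immediately from setting $\alpha=1$ in (\ref{quench-desing1}). The only minor quibble is that the remark about $c>0$ and physical sign of $u$ appears in the paper's treatment of the case $\alpha>1$, not immediately after this lemma, but this does not affect the argument.
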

Then the only possible quenching solution would correspond to the trajectory on $W^{cs}(0,0)$ for (\ref{quench-desing1}).
Write the equation of consideration again:
\begin{equation}
\label{quench-desing2}
\begin{cases}
\frac{d \lambda}{d\tau} = cx \lambda - \lambda^3, & \\
\frac{d x}{d\tau} =  - x + x(cx - \lambda^2). & 
\end{cases}
\end{equation}
Now the Center Manifold Theorem guarantees the existence of a function $x = h(\lambda)$, whereas the concrete form seems to be unknown at a glance.
Obviously $(\lambda, x) = (0,0)$ is an equilibrium with the eigenvalues $(0,-1)$.
Therefore we compute the approximation of $h$ via the formal power series expansion $x = h(\lambda) = \sum_{n=2}^\infty \hat x_n \lambda^n$.
From (\ref{quench-desing2}) the invariant solution curve (namely, the center manifold) must satisfy
\begin{equation*}
(cx \lambda - \lambda^3)\frac{dh}{d\lambda} =  - x + x(cx - \lambda^2),
\end{equation*}
equivalently,
\begin{align*}
\left(c\sum_{n=2}^\infty \hat x_n \lambda^{n+1} - \lambda^3 \right) \sum_{n=2}^\infty n\hat x_n \lambda^{n-1} = -\sum_{n=2}^\infty \hat x_n \lambda^n + \sum_{n=2}^\infty \hat x_n \lambda^n\left(c\sum_{n=2}^\infty \hat x_n \lambda^n - \lambda^2\right)
\end{align*}
and
\begin{align*}
c\sum_{n=2}^\infty \sum_{k_1, k_2} \hat x_{k_1} k_2 \hat x_{k_2} \lambda^{2n} -\sum_{n=2}^\infty n\hat x_n \lambda^{n+2} = -\sum_{n=2}^\infty \hat x_n \lambda^n + c\sum_{n=2}^\infty \sum_{k_1, k_2} \hat x_{k_1} \hat x_{k_2} \lambda^{2n} -\sum_{n=2}^\infty \hat x_n \lambda^{n+2},
\end{align*}
where the sum $\sum_{k_1, k_2}$ runs over $k_1, k_2 \geq 2$ with $k_1 + k_2 = n$.
Comparison of coefficients for each $\lambda^n$ indicates that an approximate formal expansion of $x=h(\lambda)$ is given as $h= O(\lambda^5)$ as $\lambda\to 0$\footnote{
This result is obtained by calculations of coefficients up to $\lambda^4$.
Actually, higher order terms give little contributions to the present study.
}.
%
Substituting this formal series, we know that the asymptotic behavior on $W^c(0,0)$ is dominated by
\begin{equation*}
\frac{d \lambda}{d\tau} = - \lambda^3(1+o(1)),
\end{equation*}
which solves
\begin{equation*}
\lambda(\tau) = \frac{1}{\sqrt{2}}\tau^{-1/2} + o\left(\tau^{-1/2}\right).
\end{equation*}
Summarizing the arguments, the asymptotic behavior of solution $(\lambda(\tau), x(\tau))$ on $W^c(0,0)$ is expressed as
\begin{equation*}
\lambda(\tau) = \frac{1}{\sqrt{2}}\tau^{-1/2} + o\left(\tau^{-1/2}\right),\quad x(\tau)=Ce^{-\tau}(1+o(1)).
\end{equation*}
Recall that we have the following relationship of time (moving frame) scales:
\begin{equation*}
\frac{d\xi}{ds} \frac{ds}{d\tau} = \phi \lambda = \left(\frac{x}{\lambda}\right) \lambda = x.
\end{equation*}
The maximal existence time in $\xi$-scale is thus
\begin{equation*}
\xi_{\max} = C\int_0^\infty e^{-\tau}(1+o(1))d\tau < \infty,
\end{equation*}
thus the original solution $(\phi(\xi), \psi(\xi))$ admits finite-time singularity.
Note that
\begin{equation*}
\xi_{\max} - \xi = C\int_{\tau}^\infty e^{-\tilde \tau}(1+o(1))d\tilde \tau \sim Ce^{-\tau}\quad \text{ as }\quad \tau \to \infty,
\end{equation*}
equivalently,
\begin{equation*}
\tau \sim C\ln (\xi_{\max} - \xi)^{-1}\quad \text{ as }\quad \tau \to \infty.
\end{equation*}
Therefore we have
\begin{align*}
\phi(\xi) &= \frac{x}{\lambda} \sim C(\xi_{\max} - \xi)(\ln (\xi_{\max} - \xi)^{-1})^{1/2} \to 0\quad \text{ as }\quad \xi \to \xi_{\max},\\
\psi(\xi) &= -\frac{1}{\lambda} \sim -C(\ln (\xi_{\max} - \xi)^{-1})^{1/2} \to -\infty\quad \text{ as }\quad \xi \to \xi_{\max}.
\end{align*}

Summarizing the above arguments, we have the following theorem, which describes the existence of quenching traveling wave with concrete characterization of quenching rate.

\begin{thm}
Consider (\ref{heat}) with $\alpha = 1$.
Then the quenching traveling wave solutions for (\ref{heat}) is, if exists, characterized by trajectories whose initial data are on the center-stable manifold $W^{cs}(p_0)$ for (\ref{quench-desing2}).
The quenching rates; namely the extinction rate of $\phi$ and blow-up rate of $\psi$ are
\begin{equation*}
\phi(\xi) \sim C(\xi_{\max} - \xi)(\ln (\xi_{\max} - \xi)^{-1})^{1/2},\quad \psi(\xi) \sim -C(\ln (\xi_{\max} - \xi)^{-1})^{1/2} \quad \text{ as }\quad \xi \to \xi_{\max}
\end{equation*} 
with $C>0$.
\end{thm}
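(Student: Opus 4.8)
The plan is to carry out the four-step program of Section~\ref{section-mechanism}, most of whose computational content is already in place in the derivation preceding the statement; what remains is to organize it into a proof and to certify the two points where rigor is genuinely needed. For the characterization, observe that the compactification $\phi=x/\lambda$, $\psi=-1/\lambda$ together with the time desingularizations leading to (\ref{quench-desing2}) sends the quenching locus to the horizon: $\psi\to\pm\infty$ if and only if $\lambda\to 0$. Since a quenching profile requires $\psi(\xi)\to-\infty$, its image under the compactification is a trajectory of (\ref{quench-desing2}) whose $\omega$-limit set lies on $\{\lambda=0\}$. On the horizon the flow reduces to $\dot x=x(cx-1)$, which is gradient-like and has exactly the two bounded equilibria $p_0=(0,0)$ and $p_c=(0,c^{-1})$. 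Because $p_c$ is a source (its linearization is $\diag(1,1)$), no nonstationary orbit is forward-asymptotic to it; and because quenching also requires $\phi=x/\lambda\to 0$ while $\lambda\to 0$, the coordinate $x$ must tend to $0$ as well, which excludes the remaining possibility that $x$ escapes along the horizon. Hence the trajectory converges to $p_0$, i.e., its initial data lie on $W^{cs}(p_0)$; this proves the first assertion.

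For the rates, $p_0$ is semi-hyperbolic, with the $\lambda$-axis as center direction (eigenvalue $0$) and the $x$-axis as stable direction (eigenvalue $-1$). The Center Manifold Theorem supplies a local invariant curve $x=h(\lambda)$, $h(0)=h'(0)=0$; running the formal power-series recursion that comes from $(cx\lambda-\lambda^3)h'(\lambda)=-x+x(cx-\lambda^2)$ forces the coefficients up to order four to vanish, so $h(\lambda)=O(\lambda^5)$ (consistently with the exact invariance of $\{x=0\}$), and the reduced equation is $\dot\lambda=-\lambda^3(1+o(1))$. This is precisely the semi-hyperbolic situation of Section~\ref{section-semi-hyp} with $d=3$, so the L'H\^opital / orbital-equivalence argument there yields $\lambda(\tau)=\tfrac{1}{\sqrt 2}\tau^{-1/2}+o(\tau^{-1/2})$. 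Along an orbit in $W^{cs}(p_0)$ that is not contained in $\{x=0\}$ one has $\dot x=-x(1+\lambda^2-cx)$ with $\lambda,x\to 0$, whence $x(\tau)=Ce^{-\tau}(1+o(1))$ up to slowly varying factors. Finally I would pass back to the moving-frame variable through $\frac{d\xi}{d\tau}=\frac{d\xi}{ds}\frac{ds}{d\tau}=\phi\lambda=x$: integrability of $x(\tau)$ on $[0,\infty)$ gives $\xi_{\max}<\infty$, then $\xi_{\max}-\xi=\int_\tau^\infty x\,d\tilde\tau\sim Ce^{-\tau}$ and $\tau\sim\ln(\xi_{\max}-\xi)^{-1}$, and substituting into $\phi=x/\lambda$ and $\psi=-1/\lambda$ produces
\[
\phi(\xi)\sim C(\xi_{\max}-\xi)(\ln(\xi_{\max}-\xi)^{-1})^{1/2},\qquad \psi(\xi)\sim -C(\ln(\xi_{\max}-\xi)^{-1})^{1/2},
\]
as claimed.

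The main obstacle is the analytic justification inside the center-manifold step. A center manifold is not unique, and competing choices differ by flat terms; one must argue that, because the reduction of Section~\ref{section-semi-hyp} reads off only the leading monomial $-\lambda^3$, the leading term $\tfrac{1}{\sqrt 2}\tau^{-1/2}$ of $\lambda(\tau)$ is independent of this ambiguity, and likewise that the $O(\lambda^5)$ tail does not perturb it. One must also control the transverse coordinate $x(\tau)$ with enough precision: the exponential factor $e^{-\tau}$ is what makes $d\xi/d\tau$ integrable and fixes $\tau\sim\ln(\xi_{\max}-\xi)^{-1}$, while the accompanying powers of $\tau$ in $x(\tau)$ must be shown to cancel once one forms the quotient $\phi=x/\lambda$ and re-expresses everything in $\xi$, so that they do not contaminate the stated rates. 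The remaining pieces — the classification of $p_0$ and $p_c$, the gradient-like structure of the flow on the horizon, and the chain-rule bookkeeping of the three time variables $\xi$, $s$, $\tau$ — are routine, and existence of a quenching traveling wave need not be established thanks to the ``if exists'' hypothesis.
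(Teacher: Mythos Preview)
Your proposal is correct and follows essentially the same route as the paper: classify the equilibria on the horizon to single out $p_0$, invoke the Center Manifold Theorem to obtain $h(\lambda)=O(\lambda^5)$ and the reduced law $\dot\lambda=-\lambda^3(1+o(1))$, solve for $\lambda(\tau)\sim\tau^{-1/2}/\sqrt2$ and $x(\tau)\sim Ce^{-\tau}$, and then undo the time changes via $d\xi/d\tau=x$ to extract the rates. Your caution about the slowly varying factor in $x(\tau)$ is well placed (in fact $x(\tau)\sim C\tau^{-1/2}e^{-\tau}$), but, as you anticipate, the same factor appears in $\xi_{\max}-\xi$ and cancels in the end, so the stated rates are unaffected.
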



\subsection{FitzHugh-Nagumo system with nonlinear diffusion : extinction}
\label{section-ex-extinction}
Here we change our concern to other finite-time singularities such as finite-time extinction.
The present example is traveling wave solution for the following degenerate parabolic partial differential equation:
\begin{align}
\label{deg-para}
u_t &= d(u^{m+1})_{xx} + f_p(u),\quad (t,x)\in Q := (0,\infty)\times \mathbb{R},\\
\label{deg-para-bc}
u(0,x) &= u_0(x),\quad x\in \mathbb{R},\ 0\leq u_0 \leq 1,
\end{align}
where $m > 0$, $d = (m+1)^{-1}$, $f_p(u) = u^p(1-u)(u-a)$ with $a\in (0, 1/2)$, and $u_0\in C^0(\mathbb{R})$.
Assume that $m+p\equiv N \geq 1$.
Preceding works (e.g., \cite{LPB2004}) show that (\ref{deg-para}) admits a traveling wave solution which is identically zero on a subset of $\mathbb{R}$.

\begin{dfn}[Finite traveling wave, e.g., \cite{LPB2004}]\rm
A {\em finite traveling wave solution} of (\ref{deg-para})-(\ref{deg-para-bc}) is a solution of the form $u(t,x) = \varphi(x-ct)$ with a velocity $c\in \mathbb{R}$ satisfying $\varphi(z) \equiv 0$ for $z\geq w$ (or $z\leq w$) for some $w\in \mathbb{R}$. 
\end{dfn}
\par
Under the traveling wave profile ansatz $u(t,x) \equiv \phi(x-ct)\equiv \phi(\xi)$, the traveling wave equation associated with (\ref{deg-para}) becomes
\begin{equation*}
-c\phi' = (\phi^m \phi')' + f_p(\phi),\quad {}' = \frac{d}{d\xi},
\end{equation*}
equivalently,
\begin{equation}
\label{deg-para-tw}
\phi^m\phi' = \psi,\quad \phi^m \psi' = -c\psi - \phi^m f_p(\phi). 
\end{equation}
Observe that the system (\ref{deg-para-tw}) has a singularity at $\phi=0$, which induces a finite-time (frame) singularity for traveling wave solutions.
We introduce a time-variable desingularization
\begin{equation*}
\frac{d\xi}{d\tau} = \phi(\xi)^m
\end{equation*}
to obtain the following desingularized vector field:
\begin{equation}
\label{deg-para-tw-desing}
\dot \phi = \psi,\quad \dot \psi = -c\psi - f_{m+p}(\phi),\quad \dot {} = \frac{d}{d\tau}. 
\end{equation}
The system has equilibria at $(\phi, \psi) = (0,0)$ and $(1,0)$ for any $c\in \mathbb{R}$ and $a\in (0,1/2)$.
Also note that, for given $a\in (0,1/2)$, there is a connecting orbit from $(0,0)$ to $(1,0)$ with some positive value $c > 0$.
The Jacobian matrices at these equilibria are
\begin{equation*}
\begin{pmatrix}
0 & 1\\
(m+p) a\phi^{m+p-1} & -c
\end{pmatrix}
\text{ at }(0,0),\quad 
\begin{pmatrix}
0 & 1\\
1 - a & -c
\end{pmatrix}
\text{ at }(1,0),
\end{equation*}
respectively.
Eigenvalues are
\begin{equation*}
\left\{\frac{1}{2}(-c\pm \sqrt{c^2 + 4(m+p)a\phi^{m+p-1}}) \mid \phi = 0\right\} \text{ at }(0,0),\quad 
\left\{ \frac{-c\pm \sqrt{c^2 +4(1-a) } }{2} \right\}\text{ at }(1,0),
\end{equation*}
which imply that the point $(1,0)$ is always saddle, whereas $(0,0)$ is not hyperbolic when $m+p > 1$.

\subsubsection{The case $m+p=1$}
If $m+p=1$, then the origin $(0,0)$ is also saddle and hence we easily obtain an asymptotic behavior 
\begin{equation*}
\phi(\tau) = Ce^{\lambda_{\min} \tau}(1+o(1)) \text{ as }\tau\to -\infty,\text{ where }\lambda_{\min} > 0.
\end{equation*}
Therefore
\begin{equation*}
\xi_{\min} = -\int_{-\infty}^0 \phi(\tau)^m d\tau \sim - C\int_{-\infty}^0 e^{\lambda_{\min} m\tau} d\tau < \infty.
\end{equation*}
Therefore the trajectory $\{\phi(\tau)\}$ is defined on $(\xi_{\min}, +\infty)$ in $\xi$-scale and the function
\begin{equation*}
\varphi(\xi) \equiv  \begin{cases}
0 & \xi \in (-\infty, \xi_{\min}]\\
\phi(\xi) & \xi \in (\xi_{\min}, +\infty)
\end{cases}
\end{equation*}
is a finite traveling wave solution of (\ref{deg-para})
Moreover, we also have the asymptotic behavior
\begin{equation*}
\xi- \xi_{\min} \sim Ce^{\lambda_{\min} m \tau} \text{ as }\tau \to -\infty,
\end{equation*}
equivalently,
\begin{equation*}
e^{\lambda_{\min} \tau} \sim C(\xi - \xi_{\min})^{1/m} = C(\xi - \xi_{\min})^{1/(1-p)},
\end{equation*}
which coincides with the result in \cite{LPB2004}.
Therefore $\phi(\xi) \sim C(\xi - \xi_{\min})^{1/(1-p)}$ as $\xi \to \xi_{\min}+0$.
Existence of such finite traveling waves are discussed independently in \cite{Mat4}, including {\em compacton} traveling waves discussed in Section \ref{section-compacton}.

\subsubsection{The case $m+p>1$}
If $m+p \equiv N>1$, then the origin $(0,0)$ is not hyperbolic and hence we need an asymptotic behavior on the center(-unstable) manifold $W^c(0,0)$. 
According to center manifold theory (\cite{C1981}, see also \cite{LPB2004}), the center-unstable manifold of $(0,0)$ is locally given by
\begin{equation*}
\psi = h(\phi) \equiv C\phi^N + o(\phi^N)\quad \text{ as }\phi \to 0+,
\end{equation*}
where $C>0$ is a constant.
In this case, from (\ref{deg-para-tw-desing}) we have the vector field $\dot \phi = C\phi^N + o(\phi^N)$ on $W^c(0,0)$.
Further desingularizing this vector field by
\begin{equation*}
\frac{d\tau}{d\eta} = \phi^{N-1},
\end{equation*}
we finally have 
\begin{equation}
\label{deg-desing-eta}
\frac{d\phi}{d\eta} = C\phi + o(\phi).
\end{equation}
The origin $\phi = 0$ is the {\em hyperbolic source for (\ref{deg-desing-eta})}.
Note that the new system (\ref{deg-desing-eta}) is orbitally equivalent to $\dot \phi = C\phi^N + o(\phi^N)$ as long as $\phi > 0$.
\par
Dynamics around $\phi = 0$ for (\ref{deg-desing-eta}) is described by $\phi(\eta) = \phi(0)e^{C\eta}(1+o(1))$ near $\phi = 0$.
In the original frame coordinate, we have
\begin{align*}
\xi_{\min} &= \int_{-\infty}^0 \phi(\eta)^m \frac{d\tau}{d\eta} d\eta = \int_{-\infty}^0 \phi(\eta)^{1-N+m} d\eta = \int_{-\infty}^0 \phi(\eta)^{1-p} d\eta = \int_{-\infty}^0 e^{C(1-p)\eta} d\eta < \infty,
\end{align*}
which yields that
\begin{equation*}
\xi - \xi_{\min} \sim C'e^{C(1-p)\eta}(1+o(1))\quad \Leftrightarrow \quad \phi(\xi) \sim C(\xi - \xi_{\min})^{1/(1-p)}\quad \text{ as }\quad \xi \to \xi_{\min}+0,
\end{equation*}
which is the same as the case $N=1$.
There is an interesting fact that the practical regularity of traveling wave solutions depends only on $p$, independent of the degeneracy exponent $m$.

\begin{rem}
The result shown here is previously revealed in \cite{LPB2004}.
Our arguments here show that the treatment of asymptotic behavior near finite-time singularities (degenerate points or blow-up directions) can be identical in good agreements with previous studies of such asymptotic behavior.
\end{rem}

\subsection{The KdV equation with nonlinear dispersion : compacton}
\label{section-compacton}
{\em Compactons}, in the present interests in this subsection, are introduced by Rosenau and Hyman in the study of nonlinear dispersion in the formation of patterns in liquid drops \cite{RH1993}.
Typical model is the family of fully nonlinear Korteweg-de Vries (KdV) equations $K(m,n)$:
\begin{equation}
\label{gKdV}
u_t + (u^m)_x + (u^n)_{xxx} = 0,\quad m>0,\ 1\leq n\leq 3.
\end{equation}
The equation $K(2,1)$ is the well-known KdV equation and $K(3,1)$ is the modified KdV (mKdV) equation.
The essence for generating compacton is the case $n>1$.

\par
We pay attention to traveling wave solutions $u(t,x) = \phi(\xi)$ with $\xi = x-ct$ such that $\lim_{\xi \to -\infty} \phi(\xi) = 0$.
After integration of the original equation in $\xi$, we have the following equation for $\phi$:
\begin{equation}
-c\phi + \phi^m + (\phi^n)_{\xi \xi} = 0
\end{equation}
equivalently,
\begin{equation}
\label{sys-gKdV}
\begin{cases}
n\phi^{n-1}\phi' = \psi & \\
\psi' =  c\phi - \phi^m &
\end{cases},\quad {}' = \frac{d}{d\xi}.
\end{equation}
The equation (\ref{sys-gKdV}) has the singularity at $\phi = 0$ if $n>1$.
Introducing the time-scale transformation
\begin{equation*}
\frac{d\xi}{dz} = n\phi(\xi)^{n-1},
\end{equation*}
we have the desingularized vector field
\begin{equation}
\label{sys-gKdV-desing}
\begin{cases}
\dot \phi = \psi & \\
\dot \psi = n\phi^n (c - \phi^{m-1})
\end{cases},\quad \dot {} = \frac{d}{dz}.
\end{equation}
As the correspondence between soliton solution for $K(2,1)$ and homoclinic trajectories of the origin for (\ref{sys-gKdV-desing}) with $(m,n)=(2,1)$,  compactons are considered as homoclinic trajectories of the origin for (\ref{sys-gKdV-desing}) with $n>1$.

\par
Now we assume $n>1$ and $m+n-2 > 0$.
The Jacobian matrix for (\ref{sys-gKdV-desing}) is then
\begin{equation*}
\begin{pmatrix}
0 & 1 \\
n(nc\phi^{n-1} - (m+n-1)\phi^{m+n-2}) & 0
\end{pmatrix}.
\end{equation*}
The origin is therefore generically (i.e., as long as $c\not = 0$) {\em a nilpotent singularity}.
We pay attention to the case $c>0$.

\subsubsection{The case $m > 1$}
In this case, the Newton diagram for (\ref{sys-gKdV-desing}) is characterized as
\begin{equation*}
\gamma = \{\lambda p + (1-\lambda)q \mid \lambda \in [0,1], p = (0,2), q=(n+1,0)\}.
\end{equation*}
Therefore we choose the blow-up $\Phi : [0,\infty)\times \mathbb{R}^2\to \mathbb{R}^2$ as
\begin{equation*}
\Phi(\bar \phi, \bar \psi, r) = (\phi, \psi),\quad \phi = r^2\bar \phi,\ \psi = r^{n+1}\bar \psi.
\end{equation*}
Consider the chart $\{\bar \phi = +1\}$; namely $\phi = r^2,\ \psi = r^{n+1}\bar \psi$
Then the vector field (\ref{sys-gKdV-desing}) in this chart is
\begin{equation*}
\begin{cases}
\dot r = \frac{1}{2}r^n \bar \psi, &\\
\dot {\bar \psi} = -\frac{n+1}{2} r^{n-1}\bar\psi^2 + nc r^{n-1} - nr^{2m+n-3}.
\end{cases}
\end{equation*}
Now introducing the time-variable transform
\begin{equation*}
\frac{dz}{ds} = r^{-(n-1)},
\end{equation*}
we have the vector field
\begin{equation}
\label{sys-gKdV-desing-not-0}
\begin{cases}
\dot r = \frac{1}{2}r \bar \psi, &\\
\dot {\bar \psi} = -\frac{n+1}{2} \bar\psi^2 + nc - nr^{2m-2}.
\end{cases}
\end{equation}
Equilibria in the invariant manifold $\{r=0\}$ are
\begin{equation*}
p_\pm = \left(0, \pm \sqrt{\frac{2nc}{n+1}}\right).
\end{equation*}

The Jacobian matrices at these equilibria are
\begin{equation*}
\begin{pmatrix}
\pm \sqrt{\frac{nc}{2(n+1)}} & 0 \\
0 & \mp \sqrt{2nc(n+1)}
\end{pmatrix}
\end{equation*}
assuming $m>3/2$.
\par
Now assume that there is a connecting orbit from $p_+$ to $p_-$.
Then the asymptotic behavior of the orbit around $p_-$ for $r$-variable is characterized as
\begin{equation}
\label{asym-gKdV+}
r\sim e^{\mu_{\min} s}(1+o(1)) \quad \text{ as }s\to +\infty
\end{equation}
with some $\mu_{\min} < 0$.
Similarly the asymptotic behavior of the orbit around $p_+$ for $r$-variable is characterized as
\begin{equation}
\label{asym-gKdV-}
r\sim e^{\lambda_{\min} s}(1+o(1)) \quad \text{ as }s\to -\infty
\end{equation}
with some $\lambda_{\min} > 0$.
Thus the upper and lower bounds of the wave corresponding to the connecting orbit in $\xi$-frame scale are
\begin{align*}
\xi_{\max} &= n\int_0^\infty \phi(z)^{n-1} dz = n\int_0^\infty r(s)^{2(n-1)}\cdot r(s)^{-(n-1)} ds\\
	&\sim C\int_0^\infty e^{\mu_{\min} (n-1)s}(1+o(1)) ds < \infty,\\
\xi_{\min} &= -n\int_{-\infty}^0 \phi(z)^{n-1} dz = -n\int_{-\infty}^0 r(s)^{2(n-1)}\cdot r(s)^{-(n-1)} ds\\
	&\sim -C\int_{-\infty}^0 e^{\lambda_{\min} (n-1)s}(1+o(1)) ds > -\infty,
\end{align*}
which indicate that, if (\ref{sys-gKdV-desing-not-0}) admits a connecting orbit $\{(r(s), \bar \psi(s))\}_{s\in \mathbb{R}}$ from $p_+$ to $p_-$, the function
\begin{equation}
\label{sol-gKdV}
u(t,x) := \begin{cases}
r(\xi)^2 & \text{ if $\xi \equiv x-ct \in (\xi_{\min}, \xi_{\max})$} \\
0 & \text{ otherwise }
\end{cases}
\end{equation}
is a solution of (\ref{gKdV}).
Moreover, the asymptotic behavior (\ref{asym-gKdV+}) - (\ref{asym-gKdV-}) also implies that
\begin{equation*}
r(\xi) \sim \begin{cases}
C(\xi_{\max}- \xi)^{\frac{1}{n-1}} & \text{ as }\xi \to \xi_{\max}-0,\\
C(\xi - \xi_{\min})^{\frac{1}{n-1}} & \text{ as }\xi \to \xi_{\min}+0.
\end{cases}
\end{equation*}
The concrete form (\ref{sol-gKdV}) indicates that
\begin{equation*}
u(t,x) \sim \begin{cases}
C(\xi_{\max}- \xi)^{\frac{2}{n-1}} & \text{ as }\xi\equiv x-ct \to \xi_{\max}-0,\\
C(\xi - \xi_{\min})^{\frac{2}{n-1}} & \text{ as }\xi\equiv x-ct \to \xi_{\min}+0.
\end{cases}
\end{equation*}
which completely corresponds to the asymptotic behavior derived in \cite{R2005}.
In particular, the compacton traveling wave solution belong to 
\begin{equation}
\label{func-gKdV-case1}
\begin{cases}
C^{r,\sigma}(\mathbb{R}) & \text{ if $n-1 < 2$, where $r = \lfloor 2/(n-1)\rfloor$ and $\sigma = 2/(n-1) - \lfloor 2/(n-1)\rfloor$,}\\
C^{\sigma}(\mathbb{R}) & \text{ if $n-1 > 2$, where $\sigma = 2/(n-1)$.}
\end{cases}
\end{equation}
Note that this compacton wave solution in {\em not} even $C^1$ if $n>2$.

\subsection{Periodic blow-up beyond type-I blow-up rate and grow-up}
The final example is a blow-up solution associated with nontrivial invariant sets at infinity.
Our present concern is periodic blow-up whose blow-up rate is not of type-I.
Fixing $m,n\in \mathbb{Z}_{\geq 1}$, 
consider the Li\'{e}nard equation (e.g., \cite{DH1999})
\begin{equation}
\label{Lienard}
\begin{cases}
x' = y, & \\
y' = -(\epsilon x^m + \sum_{k=0}^{m-1} a_k x^k) - y(x^n + \sum_{k=0}^{n-1} b_k x^k), &
\end{cases}\quad {}' = \frac{d}{dt}
\end{equation}
with $\epsilon \in\{\pm 1\}$ if $m\not = 2n+1$, and $\epsilon \in \mathbb{R}\setminus \{0\}$ if $m=2n+1$.

Dumortier and Herssens discuss the asymptotic behavior of solutions for (\ref{Lienard}) at infinity in \cite{DH1999}.
In particular, the special choice of $(m,n)$ yields the following behavior.

\begin{itemize}
\item If $m=2n+1$ with {\em even} $n$ and $\epsilon > (4(n+1))^{-1}$, then (\ref{Lienard}) admits a repelling periodic orbit at infinity. In particular, this periodic orbit is of saddle type.
\item If $m=2n+1$ with {\em odd} $n$ and $\epsilon > (4(n+1))^{-1}$, then (\ref{Lienard}) admits a non-hyperbolic periodic orbit.
\end{itemize}

Periodic blow-up theorem (Proposition \ref{prop-periodic-blowup}) indicates that the system (\ref{Lienard}) with backward time direction admits a periodic blow-up solution with blow-up rate $O(t_{\max}-t)^{-1/n}$; namely, it is type-I blow-up\footnote{
We easily observe that (\ref{Lienard}) is asymptotically quasi-homogeneous with type $(1,n+1)$ and order $n+1$.
See \cite{Mat}.
}, {\em provided that $n$ is an even integer}.
Our interest here is then the asymptotic behavior of periodic orbits at infinity with the following setting.

\begin{ass}
\label{ass-Lienard}
Let $m =2n+1$ and $n$ be an odd integer.
Moreover, set $\epsilon = 1$.
Finally, set the sequence $\{a_k\}_{k=0}^{2n}$ and $\{b_k\}_{k=0}^{n-1}$ as
\begin{equation*}
a_k = 0\ (k=0,\cdots, 2n-1),\ a_{2n} = 1,\quad b_k = 0\ (k=0,\cdots, n-1).
\end{equation*}
Namely, our system is reduced to
\begin{equation}
\label{Lienard-ours}
\begin{cases}
x' = y, & \\
y' = -(x^{2n+1} + x^{2n}) - x^n y, &
\end{cases}\quad {}' = \frac{d}{dt}.
\end{equation}
\end{ass}
The most essential point in the assumption is that {\em $n$ is odd}.
Introducing the quasi-polar coordinate compactification
\begin{equation}
\label{cpt-qpolar}
x = \frac{{\rm Cs}\theta}{r},\quad y = \frac{{\rm Sn}\theta}{r^{n+1}}
\end{equation}
the desingularized vector field for (\ref{Lienard}) is
\begin{equation}
\begin{cases}
\dot r = r {\rm Cs}^n \theta {\rm Sn}^2\theta +r^2 {\rm Cs}^{2n} \theta {\rm Sn}\theta, & \\
\dot \theta = -\left(1+ {\rm Cs}^{n+1} \theta {\rm Sn}\theta + r{\rm Cs}^{2n+1} \theta \right),&
\end{cases}\quad \dot{} = \frac{d}{d\tau},
\end{equation}
where
\begin{equation*}
\frac{d\tau}{dt} = r^{-n}.
\end{equation*}
The quasi-trigonometric functions ${\rm Cs}$ and ${\rm Sn}$ ($(1,l)$-quasi-trigonometric functions) are analytic functions given by the solutions of the following Cauchy problem (e,.g., \cite{DH1999}):
\begin{equation*}
\frac{d}{d\theta}{\rm Cs}\theta = -{\rm Sn}\theta,\quad \frac{d}{d\theta}{\rm Sn}\theta = {\rm Cs}^{2l-1}\theta,\quad 
\begin{cases}
{\rm Cs}0 = 1 &\\
{\rm Sn}0 = 0 &
\end{cases}.
\end{equation*}
These functions satisfy 
\begin{equation}
\label{trigonometric}
{\rm Cs}^{2l}\theta + l {\rm Sn}^{2}\theta = 1\quad \text{ for all } \theta,
\end{equation}
and both ${\rm Cs}\theta$ and ${\rm Sn}\theta$ are $T$-periodic with
\begin{equation*}
T = T_{1,l} = \frac{2}{\sqrt{l}} \int_0^1 (1-t)^{-1/2}t^{(1-2l)/2l}dt.
\end{equation*}
Functions ${\rm Cs}$ and ${\rm Sn}$ satisfy
\begin{align*}
&{\rm Cs}(-\theta) = {\rm Cs}\theta,\quad {\rm Sn}(-\theta) = -{\rm Sn}\theta,\\
&{\rm Cs}(T/2 - \theta) = -{\rm Cs}\theta,\quad {\rm Sn}(T/2 - \theta) = {\rm Sn}\theta,\\
&{\rm Cs}(T/2 + \theta) = -{\rm Cs}\theta,\quad {\rm Sn}(T/2 + \theta) = -{\rm Sn}\theta.
\end{align*}
We immediately know that $r=0$ is invariant and $\dot \theta < 0$ for sufficiently small $r$.
Therefore it is useful to consider the solution $r$ as a function of $\theta$ followed by the vector field (cf. \cite{DH1999})
\begin{align}
\notag
\frac{dr}{d\theta} &= -\frac{r {\rm Cs}^n \theta {\rm Sn}^2\theta + r^2 {\rm Cs}^{2n} \theta {\rm Sn}\theta}{\left(1+ {\rm Cs}^{n+1} \theta {\rm Sn}\theta \right) + r{\rm Cs}^{2n+1} \theta}\\
\label{dr/d_theta}
	&\equiv -\frac{\gamma_1(\theta) r + \gamma_2(\theta)r^2 }{1+ {\rm Cs}^{n+1} \theta {\rm Sn}\theta } + O(r^3)
\end{align}
with
\begin{equation*}
\gamma_1(\theta) = {\rm Sn}^2\theta {\rm Cs}^n \theta,\quad 
\gamma_2(\theta) = {\rm Sn}\theta {\rm Cs}^{2n} \theta - \frac{\gamma_1(\theta) {\rm Cs}^{2n+1}\theta}{1 + {\rm Sn}\theta {\rm Cs}^{n+1} \theta}.
\end{equation*}
First we know the following property.

\begin{prop}
\label{prop-Lienard-nonhyp-per}
Under Assumption \ref{ass-Lienard}, the Lienard equation (\ref{Lienard}) admits a periodic orbit at infinity, which is non-hyperbolic and attracting.
Similarly, if we replace the assumption $a_{2n}=1$ by $a_{2n}=-1$, the corresponding periodic orbit is non-hyperbolic and repelling.
\end{prop}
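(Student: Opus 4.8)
The plan is to reduce the stability question for the periodic orbit at infinity to a one-dimensional Poincar\'e return map and then to analyse that map up to second order. First I would confirm that $\{r=0\}$ really is a periodic orbit of the desingularized vector field: it is invariant, and along it $\dot\theta=-(1+{\rm Cs}^{n+1}\theta\,{\rm Sn}\theta)<0$ because $|{\rm Cs}^{n+1}\theta\,{\rm Sn}\theta|\le|{\rm Sn}\theta|\le(n+1)^{-1/2}<1$ by (\ref{trigonometric}); hence $\theta$ is a global angular coordinate on the orbit, which closes up with finite period. Since $\dot\theta<0$, following the transverse coordinate over one forward period amounts to integrating the scalar $T$-periodic equation (\ref{dr/d_theta}) for $r=r(\theta)$ from $\theta_0$ down to $\theta_0-T$; call the resulting return map $\mathcal P$, so that $\mathcal P(0)=0$ and $\{r=0\}$ is attracting (resp.\ repelling) exactly when $\mathcal P(r_0)<r_0$ (resp.\ $>r_0$) for small $r_0>0$.

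For the linear part, the linearization $r'=-\frac{\gamma_1(\theta)}{1+{\rm Cs}^{n+1}\theta\,{\rm Sn}\theta}r$ of (\ref{dr/d_theta}) integrates to $\mathcal P'(0)=\exp\!\big(\int_0^{T}\frac{\gamma_1(\theta)}{1+{\rm Cs}^{n+1}\theta\,{\rm Sn}\theta}\,d\theta\big)$ with $\gamma_1(\theta)={\rm Sn}^2\theta\,{\rm Cs}^n\theta$. Here the hypothesis that $n$ is \emph{odd} enters decisively: using ${\rm Cs}(T/2-\theta)=-{\rm Cs}\theta$, ${\rm Sn}(T/2-\theta)={\rm Sn}\theta$ one finds that $\theta\mapsto T/2-\theta$ sends $\gamma_1$ to $-\gamma_1$ and fixes the denominator, so the integral over $(0,T/2)$ vanishes; the translation $\theta\mapsto\theta+T/2$ turns the integral over $(T/2,T)$ into one with the same anti-symmetry, which vanishes too. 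Hence $\mathcal P'(0)=1$, i.e.\ the periodic orbit is non-hyperbolic. (This is also consistent with the regime $m=2n+1$, $n$ odd, $\epsilon=1>(4(n+1))^{-1}$ quoted from \cite{DH1999}.)

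Because the linear part is trivial I must go to the quadratic term. The perturbing monomial $a_{2n}x^{2n}$ has weighted degree one below the principal part, so it enters the desingularized field only in the $O(r^2)$ part of $\dot r$ and the $O(r)$ part of $\dot\theta$, and therefore contributes to (\ref{dr/d_theta}) first at order $r^2$, with coefficient $-a_{2n}\,\gamma_2(\theta)/(1+{\rm Cs}^{n+1}\theta\,{\rm Sn}\theta)$, while the $r^1$-coefficient $-\gamma_1(\theta)/(1+{\rm Cs}^{n+1}\theta\,{\rm Sn}\theta)$ is independent of $a_{2n}$; simplifying the expression for $\gamma_2$ in (\ref{dr/d_theta}) gives $\gamma_2(\theta)={\rm Sn}\theta\,{\rm Cs}^{2n}\theta/(1+{\rm Sn}\theta\,{\rm Cs}^{n+1}\theta)$. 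A second-order expansion of the return map then yields $\mathcal P(r_0)=r_0+a_{2n}J\,r_0^2+O(r_0^3)$ with
\[
J=\int_0^{T}\frac{{\rm Sn}\theta\,{\rm Cs}^{2n}\theta}{(1+{\rm Sn}\theta\,{\rm Cs}^{n+1}\theta)^2}\,e^{A(\theta)}\,d\theta,\qquad A(\theta)=-\int_0^{\theta}\frac{\gamma_1(s)}{1+{\rm Cs}^{n+1}s\,{\rm Sn}s}\,ds .
\]
The claim then reduces to $J<0$: this gives $\mathcal P(r_0)=r_0+Jr_0^2+\cdots<r_0$ when $a_{2n}=1$ (attracting), while replacing $a_{2n}=1$ by $a_{2n}=-1$ flips the quadratic coefficient to $-J>0$ (repelling). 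I would prove $J<0$ by pairing $\theta\in(0,T/2)$ with $\theta+T/2$: for $n$ odd the sign of $A'$ equals that of $-{\rm Cs}\theta$, so $A$ vanishes at $0,T/2,T$, is strictly negative on $(0,T/2)$ and strictly positive on $(T/2,T)$; moreover on $(0,T/2)$ one has ${\rm Sn}\theta\,{\rm Cs}^{n+1}\theta\ge0$, hence $(1+{\rm Sn}\theta\,{\rm Cs}^{n+1}\theta)^2\ge(1-{\rm Sn}\theta\,{\rm Cs}^{n+1}\theta)^2$. Consequently, in each matched pair the nonnegative contribution from $(0,T/2)$ — damped by $e^{A}<1$ and carrying the larger denominator — is strictly dominated by the nonpositive one from $(T/2,T)$, which is amplified by $e^{A}>1$, so $J<0$.

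The main obstacle is precisely this sign determination. The linear term dies for the soft reason of a symmetry, but the quadratic term is genuinely nonzero, so its sign has to be extracted from the interplay of the non-symmetric weight $e^{A(\theta)}$ with the ${\rm Cs},{\rm Sn}$-monomials; it is the oddness of $n$ that simultaneously drives the change of sign of $A$ across $T/2$ and the involution $\gamma_1\mapsto-\gamma_1$, so the argument genuinely uses that hypothesis. A secondary point requiring care is orientation: since $\dot\theta<0$, equation (\ref{dr/d_theta}) runs backward in real time, so one must integrate it over $[\theta_0-T,\theta_0]$ (equivalently, invert the ``$+T$''-map) in order to obtain the honest forward-time Poincar\'e map whose contraction or expansion is what the words ``attracting'' and ``repelling'' refer to.
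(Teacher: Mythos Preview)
Your proof is correct and follows the same overall strategy as the paper's Appendix: expand the Poincar\'e return map near $r=0$ to second order, use the ${\rm Cs},{\rm Sn}$ symmetries (crucially with $n$ odd) to kill the linear coefficient, and then fix the sign of the quadratic coefficient. The paper carries this out after the substitution $\phi=-\theta$, splits $\Gamma_2=\Gamma_{21}+\Gamma_{22}$, and reduces the sign of $\beta_2(T)$ to four quarter-period computations. Your handling of the quadratic term is a genuinely cleaner variant: after collapsing $\gamma_2$ to the single fraction ${\rm Sn}\theta\,{\rm Cs}^{2n}\theta/(1+{\rm Sn}\theta\,{\rm Cs}^{n+1}\theta)$, you pair $\theta$ with $\theta+T/2$ and obtain \emph{pointwise} domination of the integrand, using that $A<0$ on $(0,T/2)$, $A>0$ on $(T/2,T)$, and $(1+{\rm Sn}\theta\,{\rm Cs}^{n+1}\theta)^2\ge(1-{\rm Sn}\theta\,{\rm Cs}^{n+1}\theta)^2$ there. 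This deals with the weight $e^{A}$ directly; the paper's concluding line (``$\beta_2(T)<0$ since $e^{\alpha(\psi)}$ is always positive'') does not by itself suffice, because the unweighted integrand $\Gamma_2(\psi)={\rm Sn}\psi\,{\rm Cs}^{2n}\psi/(1-{\rm Sn}\psi\,{\rm Cs}^{n+1}\psi)^2$ changes sign on $(0,T)$, and your domination argument is exactly what closes that step. Your remark on orientation (integrating (\ref{dr/d_theta}) over $[\theta_0-T,\theta_0]$ to get the forward-time map) is also correct and corresponds precisely to the paper's passage to $\phi=-\theta$.
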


\begin{proof}
See Appendix \ref{proof-Lienard-nonhyp-per}.
A detailed derivation of (\ref{dr/d_theta}) is also shown there.
\end{proof}
The proof shows that the most essential point comes from the property of the integral 
\begin{equation*}
\alpha(\theta) = -\int_0^\theta \frac{\gamma_1(\psi) d\psi}{1 + {\rm Sn}\psi {\rm Cs}^{n+1}\psi}.
\end{equation*}
It is useful to introduce $\phi = -\theta$ in the following arguments, as the proof of Proposition \ref{prop-Lienard-nonhyp-per}.
We summarize the facts about $\alpha(\phi)$ derived from the above arguments.
Details are summarized in Appendix \ref{proof-Lienard-nonhyp-per} for details.
\begin{lem}
Using the new angular component $\phi \equiv -\theta$, the following properties hold\footnote{
The same argument as the proof shows that $\alpha(T) \not = 0$ if $n$ is even. See \cite{DH1999} for details.
}.
\begin{itemize}
\item $\alpha = \alpha(\theta)$ is smooth and $T$-periodic. In particular, $e^{\alpha(\phi)} > 0$ holds for all $\phi\in \mathbb{R}$.
\item $\alpha(\phi) < 0$ for $\phi\in (0,T/2)$, $\alpha(T/2) = 0$ and $\alpha(\phi) > 0$ for $\phi\in (T/2,T)$.
\item $\alpha(T) =0$.
\end{itemize}
\end{lem}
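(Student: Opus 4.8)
The plan is to reduce all four assertions to elementary properties of the integrand $\rho(\psi):=\gamma_1(\psi)/\bigl(1+{\rm Sn}\psi\,{\rm Cs}^{n+1}\psi\bigr)$, so that $\alpha=-\int_0^{\cdot}\rho$, using only the identity (\ref{trigonometric}), the listed reflection relations for ${\rm Cs}$ and ${\rm Sn}$, their $T$-periodicity, and the hypothesis that $n$ is \emph{odd}; the relabeling $\phi\equiv-\theta$ changes none of the parity or periodicity features used below. First, from (\ref{trigonometric}) with $l=n+1$ one gets $|{\rm Sn}\psi|\le(n+1)^{-1/2}$ and $|{\rm Cs}\psi|\le1$, hence $|{\rm Sn}\psi\,{\rm Cs}^{n+1}\psi|\le(n+1)^{-1/2}<1$ and the denominator of $\rho$ stays bounded below by $1-(n+1)^{-1/2}>0$. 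Since ${\rm Cs},{\rm Sn}$ are real-analytic, $\rho$ is real-analytic and $T$-periodic; therefore $\alpha$ is real-analytic (in particular smooth), $e^{\alpha(\phi)}>0$ holds trivially because $\alpha$ is real-valued, and $\alpha(\phi+T)-\alpha(\phi)=-\int_0^{T}\rho$, so the $T$-periodicity of $\alpha$ will follow once we show $\alpha(T)=-\int_0^T\rho=0$.

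The crux is the single symmetry $\rho(T/2-\psi)=-\rho(\psi)$. Using ${\rm Cs}(T/2-\psi)=-{\rm Cs}\psi$ and ${\rm Sn}(T/2-\psi)={\rm Sn}\psi$, the numerator satisfies $\gamma_1(T/2-\psi)={\rm Sn}^2\psi\,(-{\rm Cs}\psi)^n=-\gamma_1(\psi)$ \emph{precisely because $n$ is odd}, while the denominator is unchanged because $n+1$ is even; this is the only point where the parity of $n$ enters, and it is also why the statement fails for even $n$. Equivalently, $\rho$ is antisymmetric about $\psi=T/4$, and, by $T$-periodicity, also antisymmetric about $\psi=3T/4$ (shift into $[0,T/2]$ by a full period and reflect about $T/4$). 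Since $[0,T/2]$ is centered at $T/4$ and $[T/2,T]$ at $3T/4$, we obtain $\int_0^{T/2}\rho=0$ and $\int_{T/2}^{T}\rho=0$; hence $\alpha(T/2)=0$, $\alpha(T)=0$, and $\alpha$ is $T$-periodic.

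For the signs of $\alpha$, I would first record the sign pattern of ${\rm Cs}$ and ${\rm Sn}$ on $[0,T)$ from $\tfrac{d}{d\theta}{\rm Cs}=-{\rm Sn}$, $\tfrac{d}{d\theta}{\rm Sn}={\rm Cs}^{2n+1}$, the initial values ${\rm Cs}0=1,{\rm Sn}0=0$, and the reflection relations: ${\rm Sn}$ vanishes only at $0$ and $T/2$, with ${\rm Sn}>0$ on $(0,T/2)$ and ${\rm Sn}<0$ on $(T/2,T)$, while ${\rm Cs}$ vanishes only at $T/4$ and $3T/4$, with ${\rm Cs}>0$ on $(0,T/4)\cup(3T/4,T)$ and ${\rm Cs}<0$ on $(T/4,3T/4)$. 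As ${\rm Sn}^2\ge0$, ${\rm Cs}^n$ has the sign of ${\rm Cs}$ ($n$ odd), and the denominator is positive, it follows that $\rho>0$ on $(0,T/4)\cup(3T/4,T)$ and $\rho<0$ on $(T/4,3T/4)$. Therefore $\alpha'=-\rho$ is negative on $(0,T/4)$ and positive on $(T/4,T/2)$; with $\alpha(0)=\alpha(T/2)=0$ this forces $\alpha<0$ on $(0,T/2)$. Symmetrically, $\alpha'$ is positive on $(T/2,3T/4)$ and negative on $(3T/4,T)$; with $\alpha(T/2)=\alpha(T)=0$ this forces $\alpha>0$ on $(T/2,T)$, completing the list of assertions.

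I expect the main obstacle to be the bookkeeping behind $\alpha(T)=0$: the obvious reflection $\psi\mapsto T-\psi$ converts the denominator $1+{\rm Sn}\psi\,{\rm Cs}^{n+1}\psi$ into $1-{\rm Sn}\psi\,{\rm Cs}^{n+1}\psi$ and so does not close the argument on its own, which is why the half-period shift (transporting the antisymmetry about $T/4$ to $3T/4$ via periodicity) is needed. Keeping the parities of $n$ and $n+1$ apart throughout, and checking the quarter-period zero pattern of ${\rm Cs}$ and ${\rm Sn}$ carefully, are the only delicate steps; everything else is routine.
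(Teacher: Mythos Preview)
Your argument is correct and matches the paper's approach: the paper only spells out $\alpha(T)=0$, via a four-quarter decomposition with the substitutions $\psi\mapsto T-\psi$, $\psi\mapsto T/2-\psi$, $\psi\mapsto T/2+\psi$, which is precisely your single antisymmetry $\rho(T/2-\psi)=-\rho(\psi)$ unpacked, and it leaves $\alpha(T/2)=0$ and the sign claims to the reader. Your quarter-interval sign analysis fills in exactly what the paper asserts without proof.
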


Now we move to study the asymptotic behavior of solutions near periodic orbit at infinity.
The property $\alpha(T)=0$ indicates that the leading term in (\ref{dr/d_theta}) does not affect the stability of periodic orbits.
We thus introduce a nonlinear transform $(R,\phi) = (h(r,\phi), \phi)$ for reducing the original problem to simpler one.
 
\begin{prop}
\label{prop-per-blowup-r2R}
Let $h(r,\phi)$ be a function given as
\begin{align}
\notag
h(r,\phi) &\equiv e^{\tilde \alpha(\phi)}r + \left( e^{2\tilde \alpha(\phi)} \int_0^\phi \frac{(e^{-\tilde \alpha(\psi)}-1)\gamma_2(\psi) d\psi}{1 - {\rm Sn}\psi {\rm Cs}^{n+1}\psi} \right) r^2,\\
\label{alpha}
\tilde \alpha(\phi) &= \int_0^\phi \frac{\gamma_1(\psi) d\psi}{1 - {\rm Sn}\psi {\rm Cs}^{n+1}\psi},\quad 
\end{align}
Then $h$ is smooth in $(r,\phi)$ and positive for $r>0$ and all $\phi$.
Moreover, the vector field (\ref{dr/d_phi}) in the new coordinate $(R,\phi) \equiv (h(r,\phi), \phi)$ is transformed smoothly into
\begin{equation}
\label{simpler-R}
\frac{dR}{d\phi} = -\frac{\gamma_2(\phi)R^2 }{1- {\rm Cs}^{n+1} \phi {\rm Sn}\phi } + O(R^3).
\end{equation}
\end{prop}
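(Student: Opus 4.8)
The plan is to construct $h$ as the composition of two successive near-identity changes of the radial variable: a \emph{linear} one removing the $O(r)$ term from (\ref{dr/d_phi}), followed by a \emph{quadratic} one that rescales the coefficient of the $O(r^2)$ term to the desired form. Write $D(\phi) := 1 - {\rm Sn}\phi\,{\rm Cs}^{n+1}\phi$, so that (\ref{dr/d_phi}) reads $dr/d\phi = -\bigl(\gamma_1(\phi)r + \gamma_2(\phi)r^2\bigr)/D(\phi) + O(r^3)$, and note from (\ref{alpha}) that the linear coefficient $-\gamma_1(\phi)/D(\phi)$ equals $-\tilde\alpha'(\phi)$.

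First I would set $\rho := e^{\tilde\alpha(\phi)}r$, so $r = e^{-\tilde\alpha(\phi)}\rho$. Differentiating and substituting (\ref{dr/d_phi}), the $O(\rho)$ contributions cancel and what remains is $d\rho/d\phi = -\bigl(\gamma_2(\phi)e^{-\tilde\alpha(\phi)}/D(\phi)\bigr)\rho^2 + O(\rho^3)$, using $e^{\tilde\alpha}r^2 = e^{-\tilde\alpha}\rho^2$ and that $r,\rho$ are comparable so that $e^{\tilde\alpha}O(r^3) = O(\rho^3)$. Next I would put $R := \rho + q(\phi)\rho^2$ with $q(\phi) := \int_0^\phi (e^{-\tilde\alpha(\psi)}-1)\gamma_2(\psi)/D(\psi)\,d\psi$, hence $q'(\phi) = (e^{-\tilde\alpha(\phi)}-1)\gamma_2(\phi)/D(\phi)$. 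Computing $dR/d\phi$ from the previous step gives the $\rho^2$-coefficient $-\gamma_2(\phi)e^{-\tilde\alpha(\phi)}/D(\phi) + q'(\phi) = -\gamma_2(\phi)/D(\phi)$, and since $R^2 = \rho^2 + O(\rho^3)$ this is exactly (\ref{simpler-R}). Because $\rho^2 = e^{2\tilde\alpha(\phi)}r^2$, the composite map is $R = e^{\tilde\alpha(\phi)}r + e^{2\tilde\alpha(\phi)}q(\phi)r^2$, which is the function $h(r,\phi)$ in the statement.

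It then remains to check the regularity assertions. Smoothness of $h$ reduces to smoothness of $\tilde\alpha$ and $q$, hence to $D(\phi) \ne 0$ for all $\phi$: from the quasi-trigonometric identity (\ref{trigonometric}) with $l = n+1$ one has ${\rm Cs}^{2(n+1)}\phi \le 1$ and $(n+1){\rm Sn}^2\phi \le 1$, so $|{\rm Sn}\phi\,{\rm Cs}^{n+1}\phi| \le (n+1)^{-1/2} < 1$ and $D(\phi) > 0$, while the numerators $\gamma_1,\gamma_2$ are polynomials in ${\rm Cs}\phi,{\rm Sn}\phi$. For positivity, $(\partial h/\partial r)(0,\phi) = e^{\tilde\alpha(\phi)} > 0$, so $h(\cdot,\phi)$ is an orientation-preserving near-identity diffeomorphism near $r = 0$, and $h(r,\phi) = e^{\tilde\alpha(\phi)}r\bigl(1 + e^{\tilde\alpha(\phi)}q(\phi)r\bigr) > 0$ for $r > 0$ small — which is all that is needed in the analysis near the horizon.

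The computation is essentially mechanical; the point demanding care is the bookkeeping of the weights $e^{\tilde\alpha}$ and $e^{2\tilde\alpha}$ when passing between $r$, $\rho$ and $R$, and checking that the remainder stays genuinely cubic under the composed smooth diffeomorphism so that (\ref{simpler-R}) carries a smooth $O(R^3)$ term. A further point, not needed for the statement as given but relevant if one wants $h$ to descend to the cylinder $\mathbb{R}\times(\mathbb{R}/T\mathbb{Z})$, is the $T$-periodicity of $\tilde\alpha$ and of $q$: for $\tilde\alpha$ this is $\tilde\alpha(T) = 0$, of the same nature as the identity $\alpha(T)=0$ from the lemma preceding the proposition, and for $q$ it is $q(T) = 0$, which would follow from the symmetry relations of ${\rm Cs},{\rm Sn}$ together with $n$ odd, along the lines of Appendix \ref{proof-Lienard-nonhyp-per}.
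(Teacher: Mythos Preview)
Your two-step factorization (first $\rho = e^{\tilde\alpha}r$ to kill the linear term, then $R = \rho + q(\phi)\rho^2$ to normalize the quadratic coefficient) is the paper's argument, just presented in stages: the paper writes $R = a_1(\phi)r + a_2(\phi)r^2$ in one go, substitutes into (\ref{dr/d_phi}), and solves the resulting ODEs for $a_1,a_2$ by coefficient matching, arriving at $a_1 = e^{\tilde\alpha}$ and $a_2 = e^{2\tilde\alpha}q$.

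There is a sign slip in your reading of (\ref{dr/d_phi}): it actually reads $dr/d\phi = \bigl(\gamma_1(\phi) r - \gamma_2(\phi) r^2\bigr)/D(\phi) + O(r^3)$, with a \emph{positive} $\gamma_1 r$ term (you appear to have carried over the sign from (\ref{dr/d_theta}) before the reversal $\phi = -\theta$). With the correct sign, removing the linear term requires $\rho = \exp\bigl(-\int_0^\phi \gamma_1/D\bigr)\,r$; indeed the paper's proof in Appendix~\ref{proof-per-blowup-r2R} takes $\tilde\alpha(\phi) = -\int_0^\phi \gamma_1/D$, so there is a sign typo in (\ref{alpha}) itself, and your two sign errors happen to cancel. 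Once this is straightened out, your computation goes through verbatim.

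The more substantive gap is positivity. The statement asserts $h(r,\phi) > 0$ for \emph{all} $r>0$, and the paper proves this by showing the quadratic coefficient $a_2(\phi) \ge 0$ for every $\phi$: one simplifies $\gamma_2(\psi)/D(\psi) = {\rm Sn}\psi\,{\rm Cs}^{2n}\psi / D(\psi)^2$ and then checks, using $\tilde\alpha(T/2)=0$, the oddness of $n$, and the symmetries of ${\rm Cs},{\rm Sn}$, that the factors $\bigl(e^{-\tilde\alpha(\psi)}-1\bigr)$ and ${\rm Sn}\psi\,{\rm Cs}^{2n}\psi$ have the same sign throughout $[0,T]$, so the integrand in $q$ is nonnegative. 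Your argument from $(\partial h/\partial r)(0,\phi)>0$ only yields positivity for small $r$, which, as you rightly note, suffices for the downstream asymptotics but does not deliver the statement as written. (As a side effect, nonnegativity and nontriviality of the integrand force $q(T)>0$, so $q$ is \emph{not} $T$-periodic, contrary to your closing remark; this is harmless since (\ref{simpler-R}) has $T$-periodic coefficients regardless.)
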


\begin{proof}
See Appendix \ref{proof-per-blowup-r2R}.
\end{proof}

We also derive the inverse formula for $r$ as a function of $R$, which is used later:
\begin{align}
\label{R2r}
r &= \frac{-b + \sqrt{b^2 + 4aR}}{2a}\quad \text{(since $r> 0$)},\\
\notag
a &= e^{2\tilde \alpha(\phi)} \int_0^\phi \frac{(e^{-\tilde \alpha(\psi)}-1)\gamma_2(\psi) d\psi}{1 - {\rm Sn}\psi {\rm Cs}^{n+1}\psi},\quad 
b = e^{\tilde \alpha(\phi)},
\end{align}
where $\tilde \alpha(\phi)$ is given in (\ref{alpha}).

\begin{rem}
The asymptotic formula for small $R$ is very useful later, which is given as follows by the Taylor formula $\sqrt{1+x} \sim 1+\frac{1}{2}x$ near $x=0$:
\begin{equation}
\label{R2r-asymptotic}
r \sim e^{-\tilde \alpha(\phi)}R
\quad \text{ as }R\to 0.
\end{equation}
Indeed, 
\begin{equation*}
r = \frac{-b + \sqrt{b^2 + 4aR}}{2a} \sim \frac{-b + b \left(1+\frac{2a}{b^2}R\right)}{2a} = \frac{R}{b} = e^{-\tilde \alpha(\phi)}R\text{ as }R\to 0.
\end{equation*}
Since $e^{-2\tilde \alpha (\phi)}$ is uniformly bounded and bounded away from $0$, the asymptotics of $r$ is determined by $R$ and vice versa. 
\par
\end{rem}
Consider {\em the principal part} of (\ref{simpler-R}):
\begin{equation}
\label{simpler-R-principal}
\frac{dR}{d\phi} = -\frac{\gamma_2(\phi)R^2 }{1- {\rm Cs}^{n+1} \phi {\rm Sn}\phi },
\end{equation}
which is directly solved with respect to $\phi$ to obtain
\begin{equation}
\label{R-formula-nonhyp}
R(\phi) = R_0 \left(R_0 \int_0^\phi \Gamma_2(\psi) d\psi + 1 \right)^{-1}\quad \text{ with }\quad R(0) = R_0 > 0.
\end{equation}
where $\Gamma_2(\psi) = \gamma_2(\psi) / (1-{\rm Cs}^{n+1}\psi {\rm Sn}\psi)$, which is also given in (\ref{Gamma_2}).
We use the following properties for obtaining the asymptotic behavior of $R$ in terms of $t$.

\begin{lem}
\label{lem-Gamma2}
The following properties hold.
\begin{itemize}
\item There is a positive constant $C_2>0$ such that $\Gamma_2(\psi) \leq C_2$ for all $\psi \geq 0$.
\item $\int_0^T \Gamma_2(\psi) d\psi \equiv \Gamma_T > 0$.
\item $\int_0^\phi \Gamma_2(\psi) d\psi = N \Gamma_T + \int_0^{\phi -NT} \Gamma_2(\psi) d\psi$, where $N$ is the integer such that $0\leq \phi -NT < T$.
We write such $N$ as $N_\phi$.
\item The integral $\int_0^\phi \Gamma_2(\psi) d\psi$ is positive for all $\phi \in [0,T]$.
In particular, the integral $\int_0^\phi \Gamma_2(\psi) d\psi$ is positive for all $\phi > 0$.
\item (Asymptotic behavior of $\int_0^\phi \Gamma_2(\psi) d\psi $) $\int_0^{\phi} \Gamma_2(\psi) d\psi \sim C_3 \phi$ holds for some positive constant $C_3>0$ as $\phi \to \infty$
\end{itemize}
\end{lem}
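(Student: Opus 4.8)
The plan is to separate the five assertions into three ``soft'' ones (boundedness, the periodic splitting, the linear growth) and the two that carry the actual content (positivity of $\Gamma_T$, and positivity of every partial integral); the latter two are essentially the sign analysis already carried out in the proof of Proposition~\ref{prop-Lienard-nonhyp-per}. For the soft part I would first check that $\Gamma_2$ is well defined and smooth: the identity (\ref{trigonometric}) (with $l=n+1$) gives $|{\rm Cs}\psi|\le 1$ and $(n+1){\rm Sn}^2\psi\le 1$, and, setting $A={\rm Cs}^{2(n+1)}\psi$ and $B=(n+1){\rm Sn}^2\psi$ (so $A,B\ge 0$, $A+B=1$), the AM--GM inequality yields $({\rm Cs}^{n+1}\psi\,{\rm Sn}\psi)^2 = AB/(n+1)\le 1/(4(n+1))$, hence $1-{\rm Cs}^{n+1}\psi\,{\rm Sn}\psi \ge 1-\frac{1}{2\sqrt{n+1}}>0$ for all $\psi$. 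Since the numerator $\gamma_2$ is a bounded smooth function of ${\rm Cs}\psi,{\rm Sn}\psi$ and of this denominator, $\Gamma_2$ is smooth and $T$-periodic, so $C_2:=\sup_{\psi\ge 0}\Gamma_2(\psi)=\max_{[0,T]}\Gamma_2<\infty$ (first item). $T$-periodicity immediately gives the splitting $\int_0^\phi\Gamma_2 = N_\phi\Gamma_T+\int_0^{\phi-N_\phi T}\Gamma_2$ with $N_\phi=\lfloor \phi/T\rfloor$ (third item), and then, since $|\int_0^{\phi-N_\phi T}\Gamma_2|\le C_2T$ is bounded while $N_\phi\sim\phi/T$, we get $\int_0^\phi\Gamma_2\sim(\Gamma_T/T)\phi$ as $\phi\to\infty$, i.e.\ $C_3=\Gamma_T/T$ (fifth item, with $C_3>0$ once we know $\Gamma_T>0$).

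For $\Gamma_T>0$ (second item) I would identify $\Gamma_T$ with the quadratic coefficient of a Poincar\'{e} map. By Proposition~\ref{prop-per-blowup-r2R} the dynamics near the periodic orbit at infinity is governed by (\ref{simpler-R}), $dR/d\phi=-\Gamma_2(\phi)R^2+O(R^3)$, whose principal part (\ref{simpler-R-principal}) is solved explicitly by (\ref{R-formula-nonhyp}); hence the return map over $\phi\mapsto\phi+T$ has the form $R_0\mapsto R_0(1+\Gamma_TR_0)^{-1}+O(R_0^3)=R_0-\Gamma_TR_0^2+O(R_0^3)$. The periodic orbit corresponds to $R=0$ and the relevant trajectories lie in $\{R>0\}$, so the assertion of Proposition~\ref{prop-Lienard-nonhyp-per} that the orbit is attracting (for $a_{2n}=1$) is precisely the statement $\Gamma_T>0$; I would simply invoke Appendix~\ref{proof-Lienard-nonhyp-per} for this.

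For the positivity of $\int_0^\phi\Gamma_2$ on $(0,T]$ (fourth item) the plan is a direct sign analysis of the integrand, reducing it to facts already established for the integral $\alpha$. Using ${\rm Cs}'=-{\rm Sn}$ and ${\rm Sn}'={\rm Cs}^{2n+1}$ one can extract an explicit primitive of the leading part of $\Gamma_2$ as a rational function of ${\rm Cs}\phi,{\rm Sn}\phi$ (e.g.\ $\int{\rm Sn}\psi\,{\rm Cs}^{2n}\psi\,d\psi=-\tfrac{1}{2n+1}{\rm Cs}^{2n+1}\psi$, whose increment over $[0,\phi]$ is controlled by (\ref{trigonometric})) and absorb the remaining contribution into a term whose partial integrals are handled by the sign and monotonicity of ${\rm Cs},{\rm Sn}$ on the four quarter-periods $(0,T/4),(T/4,T/2),(T/2,3T/4),(3T/4,T)$ — exactly the bookkeeping used for $\alpha$ in Appendix~\ref{proof-Lienard-nonhyp-per}. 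Together with $\Gamma_T>0$ and the splitting of the third item, this yields $\int_0^\phi\Gamma_2>0$ for all $\phi>0$. I expect this fourth item to be the main obstacle: because $n$ is odd, ${\rm Cs}^n$ and ${\rm Sn}$ each change sign over one period, so $\Gamma_2$ is not sign-definite and the positivity of \emph{every} partial integral cannot be obtained by a crude bound — it is precisely here that the hypothesis ``$n$ odd'' (equivalently $n+1$ even, $2n+1$ odd) is essential.
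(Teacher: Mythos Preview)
Your treatment of the first, second, third, and fifth items is correct and matches the paper. The paper's own proof is extremely terse --- it asserts that the first four items ``immediately follow from the definition'' (i.e.\ from the computations already carried out in Appendix~\ref{proof-Lienard-nonhyp-per}) and only spells out the fifth, via exactly your argument: split $\int_0^\phi\Gamma_2 = N_\phi\Gamma_T + \int_0^{\phi-N_\phi T}\Gamma_2$, observe the remainder is bounded while $N_\phi\sim\phi/T$, and conclude $C_3=\Gamma_T/T$. Your identification of $\Gamma_T$ with the quadratic coefficient of the return map (hence with the attracting conclusion of Proposition~\ref{prop-Lienard-nonhyp-per}) is exactly right.

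For the fourth item, however, you are making it harder than it is, and your sketch (``extract a primitive of the leading part \ldots\ absorb the remaining contribution \ldots\ quarter-period bookkeeping'') is not concrete enough to be a proof. The point you are missing is the algebraic simplification of $\gamma_2$ carried out in Appendix~\ref{proof-per-blowup-r2R}: combining the two terms over a common denominator gives
\[
\gamma_2(\psi)=\frac{{\rm Sn}\psi\,{\rm Cs}^{2n}\psi}{1-{\rm Sn}\psi\,{\rm Cs}^{n+1}\psi},
\qquad\text{hence}\qquad
\Gamma_2(\psi)=\frac{{\rm Sn}\psi\,{\rm Cs}^{2n}\psi}{\bigl(1-{\rm Sn}\psi\,{\rm Cs}^{n+1}\psi\bigr)^2}.
\]
Since $2n$ is even and the denominator is a positive square, $\Gamma_2$ has exactly the sign of ${\rm Sn}\psi$: positive on $(0,T/2)$, negative on $(T/2,T)$. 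Thus $\phi\mapsto\int_0^\phi\Gamma_2$ is strictly increasing on $[0,T/2]$ and strictly decreasing on $[T/2,T]$; it starts at $0$, rises to a positive maximum at $T/2$, and ends at $\Gamma_T>0$. Hence it is positive on all of $(0,T]$, and by the periodic splitting on all of $(0,\infty)$. No primitive extraction or quarter-period decomposition is needed --- unimodality plus $\Gamma_T>0$ is the whole argument.
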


\begin{proof}
All statements except the last one immediately follow from the definition.
We shall prove the last statement.
Since $\int_0^\phi \Gamma_2(\psi) d\psi = N_\phi \Gamma_T + \int_0^{\phi -N_\phi T} \Gamma_2(\psi) d\psi$ holds for all $\phi > 0$, we have
\begin{equation*}
0\leq \frac{\int_0^{\phi -NT} \Gamma_2(\psi) d\psi}{N_\phi \Gamma_T} \leq \frac{C_2 T}{N_\phi \Gamma_T}
\to 0\quad \text{ as }\quad \phi\to \infty.
\end{equation*}
There is a constant $C_3 > 0$ such that $\Gamma_T = C_3 T$.
Since $\phi - N_\phi T \equiv \phi' \in [0, T)$ holds for all $\phi$, we further have
\begin{equation*}
\frac{C_3 \phi}{N_\phi \Gamma_T} = \frac{C_3 \phi}{N_\phi C_3 T} = \frac{N_\phi T + \phi'}{N_\phi T} = \left(1 + \frac{\phi'}{N_\phi T}\right) \to 1\quad \text{ as }\quad \phi\to \infty.
\end{equation*}
Therefore we have
\begin{align*}
\frac{\int_0^{\phi} \Gamma_2(\psi) d\psi - C_3 \phi}{C_3 \phi} &= \frac{N_\phi \Gamma_T + \int_0^{\phi -N_\phi T} \Gamma_2(\psi) d\psi - C_3 \phi}{C_3 \phi}\\
	&= \frac{N_\phi  \Gamma_T - C_3 \phi}{C_3 \phi} + \frac{ \int_0^{\phi -N_\phi T} \Gamma_2(\psi) d\psi }{C_3 \phi}\\
	&= \left\{ \left(1 + \frac{\phi'}{N_\phi T}\right)^{-1} - 1\right\} + \frac{ \int_0^{\phi -N_\phi T} \Gamma_2(\psi) d\psi }{C_3 \phi},
\end{align*}
which goes to $0$ as $\phi \to \infty$.
This convergence shows that $\int_0^{\phi} \Gamma_2(\psi) d\psi \sim C_3 \phi$ as $\phi \to \infty$.
\end{proof}

The above properties indicate
\begin{equation*}
\left(R_0 \int_0^\phi \Gamma_2(\psi) d\psi + 1 \right)^{-1} \leq \left(R_0 N_\phi \Gamma_T + 1 \right)^{-1},\quad \forall \phi > 0.
\end{equation*}

\begin{prop}[Asymptotic behavior of $R$]
\label{prop-asym-R}
The \lq\lq principal part" (\ref{simpler-R-principal}) of (\ref{simpler-R}) dominates the behavior of $R$ in the following sense.
If $R^{\rm original}$ and $R^{\rm pri}$ denote the solutions of (\ref{simpler-R}) and (\ref{simpler-R-principal}), respectively.
Then we have 
\begin{equation*}
R^{\rm original}(\phi) = R^{\rm pri}(C_3\phi + o(\phi))\quad \text{ as }\quad \phi\to \infty,
\end{equation*}
where $C_3$ is the positive constant obtained in Lemma \ref{lem-Gamma2}.
\end{prop}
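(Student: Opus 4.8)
The plan is to reduce (\ref{simpler-R}) to its principal part (\ref{simpler-R-principal}) by exactly the device used in Section \ref{section-semi-hyp}: a substitution that linearises the quadratic leading term, together with a L'H\^opital-type comparison (cf. (\ref{ex_LHopital})) showing that the cubic remainder only perturbs the relevant time variable by an $o(\phi)$ amount. Concretely, write $\Gamma_2(\phi)=\gamma_2(\phi)/(1-{\rm Cs}^{n+1}\phi\,{\rm Sn}\phi)$, a continuous $T$-periodic function, and set $u(\phi)=1/R^{\rm original}(\phi)$, $v(\phi)=1/R^{\rm pri}(\phi)$. From (\ref{R-formula-nonhyp}) one has the exact identity $v(\phi)=\int_0^\phi \Gamma_2(\psi)\,d\psi + R_0^{-1}$, while differentiating $u=1/R^{\rm original}$ and using (\ref{simpler-R}) gives $u'(\phi)=\Gamma_2(\phi)+e(\phi)$ with a remainder satisfying $e(\phi)=O(R^{\rm original}(\phi))=O(1/u(\phi))$ uniformly in $\phi$. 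Thus the whole matter is to show $\int_0^\phi e(\psi)\,d\psi=o(\phi)$; for then $u(\phi)=\int_0^\phi\Gamma_2(\psi)\,d\psi+o(\phi)$, which by the last item of Lemma \ref{lem-Gamma2} equals $C_3\phi+o(\phi)$. Since the change of clock $s=\int_0^\phi\Gamma_2(\psi)\,d\psi$ turns (\ref{simpler-R-principal}) into the autonomous equation $\frac{dR^{\rm pri}}{ds}=-(R^{\rm pri})^2$ with $1/R^{\rm pri}(s)=s+R_0^{-1}$, the identity $u(\phi)=s+o(\phi)=C_3\phi+o(\phi)$ is precisely the asserted $R^{\rm original}(\phi)=R^{\rm pri}(C_3\phi+o(\phi))$.

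First I would record the a priori behaviour of the two solutions for sufficiently small $R_0>0$. For $R^{\rm pri}$ this is immediate: by the positivity and divergence of $\int_0^\phi\Gamma_2$ in Lemma \ref{lem-Gamma2}, formula (\ref{R-formula-nonhyp}) is defined and positive for all $\phi\geq 0$ and $R^{\rm pri}(\phi)\to 0$. For $R^{\rm original}$ the cubic term $O(R^3)$ is dominated by the quadratic one once $R$ is small, so a standard continuation/comparison argument against $R^{\rm pri}$ keeps $R^{\rm original}$ positive and forces $R^{\rm original}(\phi)\to 0$, hence $u(\phi)\to\infty$. Note that $\Gamma_2$ is sign-changing with positive mean (it is positive on $(0,T/2)$ and negative on $(T/2,T)$ modulo $T$, since $\operatorname{sgn}\gamma_2=\operatorname{sgn}{\rm Sn}$), so $R^{\rm original}$ need not decay monotonically; only the net drift over a period, encoded in $\Gamma_T>0$, is relevant.

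The main estimate then runs as in (\ref{ex_LHopital}): given $\varepsilon>0$, since $u(\phi)\to\infty$ there is $\phi_1$ with $|e(\phi)|\leq\varepsilon$ for $\phi\geq\phi_1$; together with $\int_0^\phi\Gamma_2(\psi)\,d\psi\sim C_3\phi$ and $\varepsilon$ chosen smaller than $C_3/2$, integrating $u'=\Gamma_2+e$ yields a linear lower bound $u(\phi)\geq c\phi$ for all large $\phi$ and some $c>0$. Feeding this back, $e(\phi)=O(1/\phi)$, so $\int_{\phi_1}^\phi|e(\psi)|\,d\psi=O(\log\phi)=o(\phi)$, while the contribution of $[0,\phi_1]$ is a fixed constant. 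Hence $u(\phi)=\int_0^\phi\Gamma_2(\psi)\,d\psi+o(\phi)$, and the reduction described in the first paragraph completes the argument.

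The step I expect to be the real obstacle is the a priori control of $R^{\rm original}$ just described, and within it the upgrade from ``$u\to\infty$'' to the linear lower bound $u(\phi)\gtrsim\phi$ that is what makes $\int e=o(\phi)$ rather than merely $O(\phi)$. This is where the sign-changing periodic coefficient $\Gamma_2$ must be handled through its averaged (period-wise) effect, as supplied by Lemma \ref{lem-Gamma2}, rather than pointwise; once that bound is secured, everything else is the same bookkeeping as in the semi-hyperbolic reduction of Section \ref{section-semi-hyp}.
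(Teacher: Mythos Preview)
Your proof is correct and follows essentially the same route as the paper's: both reduce to showing $1/R^{\rm original}(\phi)=\int_0^\phi\Gamma_2(\psi)\,d\psi+o(\phi)=C_3\phi+o(\phi)$, with the paper invoking L'H\^opital directly (in the style of (\ref{ex_LHopital})) while you integrate $u'=\Gamma_2+e$ and bootstrap the bound $e=O(1/u)$ to $\int_0^\phi e=O(\log\phi)$. Your version is in fact slightly more careful, since the pointwise ratio the paper forms in its L'H\^opital step has the sign-changing $\Gamma_2$ in the denominator, whereas your direct integration and period-averaging via Lemma \ref{lem-Gamma2} sidestep that technicality.
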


\begin{proof}
See Appendix \ref{proof-asym-R}.
\end{proof}

Going back to the original problem with original $t$-timescale, we have
\begin{align*}
t_{\max} &= \int_0^\infty \frac{dt}{d\tau}d\tau = \int_0^\infty \frac{dt}{d\tau}\frac{d\tau}{d\phi} d\phi 
	 \leq C\int_0^\infty \left(R_0 \int_0^{\phi} \Gamma_2(\psi) d\psi + 1 \right)^{-n} (1+o(1)) d\phi \\
	&\leq C\int_0^\infty \left(R_0 N_\phi \Gamma_T + 1 \right)^{-n} (1+o(1)) d\phi
	= C\sum_{N=0}^\infty \left(R_0 N \Gamma_T + 1 \right)^{-n} (1+o(1)),
\end{align*}
which is finite {\em if $n>1$}\footnote{
The first inequality follows from the upper estimate of $d\phi / d\tau$.
The term $o(1)$ in the rightmost hand side is the sense \lq\lq as $N\to \infty$"
}.
On the other hand,
\begin{align*}
t_{\max} &= \int_0^\infty \frac{dt}{d\tau}d\tau = \int_0^\infty \frac{dt}{d\tau}\frac{d\tau}{d\phi} d\phi 
	= \int_0^\infty \frac{r^n}{1-{\rm Cs}^{n+1}\phi {\rm Sn}\phi + r{\rm Cs}^{2n+1}\phi}  d\phi\\
	& \geq C\int_0^\infty \left(R_0 \int_0^{\phi} \Gamma_2(\psi) d\psi + 1 \right)^{-n} (1+o(1)) d\phi
	\geq C\int_0^\infty \left(R_0 C_2 \Gamma_T  \phi + 1 \right)^{-n} (1+o(1))d\phi,
\end{align*}
which diverges if $n=1$\footnote{
The first inequality follows from the lower estimate of $d\phi / d\tau$ along the solution $R^{\rm original}$.
}, where $C_2$ is a positive constant given in Lemma \ref{lem-Gamma2}.
The above calculations yield that the solution of (\ref{Lienard-ours}) with odd $n$ whose image via compactification converges to the periodic orbit on the horizon is a (finite-time) blow-up solution if $n\geq 3$.
On the other hand, if $n=1$, the corresponding solution is a {\em grow-up solution}, namely, the solution which diverges in infinite time.
Now we are ready to calculate the behavior of blow-up and grow-up solutions.
%
%
\subsubsection{Asymptotics of $t_{\max} - t$ with $n\geq 3$}
Assume first that $n\geq 3$. 
Then we have
\begin{align*}
t_{\max} - t &= C\int_\phi^\infty \left(R_0 \int_0^\eta \Gamma_2(\psi) d\psi + 1 \right)^{-n} (1+o(1))d\eta \\
	&=C \int_\phi^\infty \left(C_3R_0 \eta + 1 \right)^{-n}(1+o(1)) d\eta \sim C (\phi + c)^{-n+1}
\end{align*}
as $\phi\to \infty$ for positive constants $C', c$, where we have used the asymptotics of $\int_0^\phi \Gamma_2(\psi) d\psi$ shown in Lemma \ref{lem-Gamma2}.
In particular, we have
\begin{equation}
\label{asym-per-ngt3}
\phi \sim (t_{\max} - t)^{-1/(n-1)}\quad \text{ as }\quad \phi \to \infty\ (\Leftrightarrow t\to t_{\max})
\end{equation}
up to multiplication of constants.
Substituting this asymptotics into (\ref{R-formula-nonhyp}), we have
\begin{equation*}
R(\phi)^{-1} \sim C \left(C_3R_0 \phi + 1 \right) \sim C(t_{\max} - t)^{-1/(n-1)} \quad \text{ as }\phi \to \infty\ (\Leftrightarrow t\to t_{\max}),
\end{equation*}
which yields
\begin{equation*}
r^{-1} \in \Theta((t_{\max}-t)^{-1/(n-1)})\quad \text{ as }\quad t\to t_{\max},
\end{equation*}
where we have also used the asymptotics (\ref{R2r-asymptotic}).
Note that the present blow-up rate is strictly faster than type-I blow-up.
In particular, our study shows that {\em non-hyperbolic periodic orbits on the horizon can induce blow-up solutions whose blow-up rates are different from type-I}.
\par
Notice that we already have the asymptotic behavior of angular component $\phi$ as (\ref{asym-per-ngt3}).
Since $d\phi / d\tau$ is positive, bounded and bounded away from zero, we have
\begin{equation*}
\phi \equiv -\theta \sim -C(t_{\max} - t)^{-1/(n-1)}\quad  \text{ as }\quad \phi \to \infty\ (\Leftrightarrow t\to t_{\max}). 
\end{equation*}
This asymptotic behavior is different from that of type-I periodic blow-ups stated in Proposition \ref{prop-periodic-blowup}.

%
%
\subsubsection{Asymptotics of $t_{\max} - t$ with $n=1$}
Next assume that $n= 1$. 
Then we have
\begin{align*}
t &= \left(\int_0^{\phi_1} + \int_{\phi_1}^{\phi} \right)\left(R_0 \int_0^\eta \Gamma_2(\psi) d\psi + 1 \right)^{-1} (1+o(1))d\eta\\
	&= \int_0^{\phi_1} \left(R_0 \int_0^\eta \Gamma_2(\psi) d\psi + 1 \right)^{-1}  (1+o(1)) d\eta + \int_{\phi_1}^{\phi} \left(R_0 C_3 \eta + 1 \right)^{-1} (1+o(1)) d\eta \\
	&= \left\{ \int_0^{\phi_1} \left(R_0 \int_0^\eta \Gamma_2(\psi) d\psi + 1 \right)^{-1} d\eta + \frac{1}{R_0 C_3}\log \frac{R_0 C_3 \phi + 1}{R_0 C_3 \phi_1 + 1} \right\}(1+o(1)) \sim C\log \frac{\phi}{\phi_1}
\end{align*}
for $1\ll \phi_1 \ll \phi\to \infty$.
In particular, we have
\begin{equation*}
\phi \sim Ce^t\quad \text{ as }\quad \phi \to \infty\ (\Leftrightarrow t\to \infty). 
\end{equation*}
Therefore
\begin{equation*}
R(\phi)^{-1} = R_0 \left(R_0 \int_0^\phi \Gamma_2(\psi) d\psi + 1 \right) \sim C \phi \sim Ce^t \quad  \text{ as }\quad \phi \to \infty\ (\Leftrightarrow t\to \infty).
\end{equation*}

\subsubsection{The final result}

Summarizing all our arguments, we have obtained the following statement.

\begin{thm}
Consider (\ref{Lienard-ours}) with odd $n$.
Then, for sufficiently large initial data, the system (\ref{Lienard}) admits a periodic divergent solution $(x(t), y(t))$ such that, with generic positive constant $C>0$,
\begin{enumerate}
\item if $n\geq 3$, $(x(t), y(t))$ blows up at $t = t_{\max} < \infty$.
Moreover, the solution has the following blow-up rate:
\begin{equation*}
x(t) \in \Theta\left( \frac{{\rm Cs}(C(t_{\max}-t)^{-1/(n-1)})}{(t_{\max}-t)^{1/(n-1)}} \right),\quad y(t) \in \Theta \left( \frac{-{\rm Sn}(C(t_{\max}-t)^{-1/(n-1)})}{(t_{\max}-t)^{(n+1)/(n-1)}} \right)\quad \text{ as }t\to t_{\max}.
\end{equation*}
\item if $n = 1$, $(x(t), y(t))$ grows up.
Namely, the solution diverges as $t\to \infty$.
The asymptotic behavior is described as follows:
\begin{equation*}
x(t) \in \Theta\left( e^t {\rm Cs}(Ce^t) \right),\quad y(t)\in \Theta\left( e^{(n+1)t}{\rm Sn}(-Ce^t)\right)\quad \text{ as }t\to t_{\max}.
\end{equation*}
\end{enumerate}
\end{thm}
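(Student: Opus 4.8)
The plan is to assemble the ingredients established in this subsection; the only genuinely new work lies in transporting the conclusions back through the quasi-polar compactification (\ref{cpt-qpolar}) and in bookkeeping the quasi-trigonometric factors. First I would invoke Proposition \ref{prop-Lienard-nonhyp-per}: under Assumption \ref{ass-Lienard} the horizon $\{r=0\}$ carries a non-hyperbolic \emph{attracting} periodic orbit. Since $\{r=0\}$ is invariant and $\dot\theta<0$ for sufficiently small $r$, the angular variable $\phi\equiv-\theta$ is strictly increasing along any solution that stays near the horizon and may therefore serve as the evolution parameter; attractivity then guarantees that every solution of (\ref{Lienard-ours}) whose compactified image starts in $\{0<r<\delta\}$ inside the basin of this orbit --- which is what ``sufficiently large initial data'' unfolds to through (\ref{cpt-qpolar}) --- satisfies $r(\phi)\to 0$ and $\phi\to+\infty$. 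Making precise that this basin meets $\{r>0\}$ arbitrarily close to the horizon is the (minor) first point to nail down, and it follows from attractivity together with invariance of $\{r=0\}$.

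Next I would push the solution through the smooth, positivity-preserving change of coordinates $R=h(r,\phi)$ of Proposition \ref{prop-per-blowup-r2R}, so that $R$ obeys (\ref{simpler-R}). By Proposition \ref{prop-asym-R} the behavior of $R$ reduces to that of the explicitly solvable principal part, with solution (\ref{R-formula-nonhyp}), up to the reparametrization $\phi\mapsto C_3\phi+o(\phi)$; hence, using Lemma \ref{lem-Gamma2}, $R(\phi)^{-1}\in\Theta(\phi)$ as $\phi\to\infty$, and combining this with the inversion asymptotics (\ref{R2r-asymptotic}) and the fact, noted after (\ref{R2r}), that $e^{-\tilde\alpha(\phi)}$ is bounded and bounded away from $0$ gives $r(\phi)^{-1}\in\Theta(\phi)$. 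Then I would pass back to the original $t$-timescale: since $d\tau/dt=r^{-n}$ and $d\phi/d\tau=1+{\rm Cs}^{n+1}\theta\,{\rm Sn}\theta+r\,{\rm Cs}^{2n+1}\theta$ is positive, bounded, and bounded away from zero near the horizon, one has $dt/d\phi\in\Theta(r^{n})=\Theta(\phi^{-n})$. Integrating the tail shows that $t_{\max}$ is finite exactly when $n>1$ --- for odd $n$, precisely when $n\ge 3$ --- in which case $t_{\max}-t\in\Theta(\phi^{-(n-1)})$, equivalently $\phi\in\Theta((t_{\max}-t)^{-1/(n-1)})$; when $n=1$ the integral diverges, so $t_{\max}=\infty$ (\emph{grow-up}), and integrating $dt/d\phi\in\Theta(r)=\Theta(\phi^{-1})$ --- where one uses that the bounded, eventually periodic prefactor of $r/\phi$ averages out against $1/\phi$, giving the sharp relation $t\sim c\log\phi$ --- one obtains $\phi\sim e^{\lambda t}$ for some $\lambda>0$.

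Finally I would substitute back into (\ref{cpt-qpolar}). The parity relations ${\rm Cs}(-\phi)={\rm Cs}\phi$ and ${\rm Sn}(-\phi)=-{\rm Sn}\phi$ give $x(t)={\rm Cs}(\phi)\,r^{-1}$ and $y(t)=-{\rm Sn}(\phi)\,r^{-(n+1)}$, so inserting $r^{-1}\in\Theta(\phi)$ together with $\phi\in\Theta((t_{\max}-t)^{-1/(n-1)})$ (for $n\ge 3$) or $\phi\sim e^{\lambda t}$ (for $n=1$) produces the stated $\Theta$-asymptotics for $x$ and $y$, the sign $-{\rm Sn}$ being absorbed into the generic constant $C$ by oddness of ${\rm Sn}$. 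The step I expect to be the main obstacle is making this last substitution rigorous: the quasi-trigonometric factors change sign, so the $\Theta$-relations must be read as two-sided bounds on $|x(t)|$ and $|y(t)|$ over the ranges of $\phi$ on which ${\rm Cs}$, respectively ${\rm Sn}$, keeps a fixed sign; and the argument of ${\rm Cs}/{\rm Sn}$ in the conclusion is the physical angle $\phi(t)$, which matches the closed-form expression $C(t_{\max}-t)^{-1/(n-1)}$ (respectively $Ce^{t}$) only up to a subleading additive term, so one must check that this discrepancy, and the exponent constant $\lambda$ in the grow-up regime, can be absorbed into the generic constant inside ${\rm Cs}$ and ${\rm Sn}$ without disturbing the $\Theta$-class of the amplitudes --- which is precisely why the statement is phrased with $\Theta$ and a generic $C$ rather than with $\sim$.
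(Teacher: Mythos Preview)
Your proposal is correct and follows essentially the same route as the paper: the theorem is stated as a summary of the subsection, and you have accurately assembled the chain Proposition~\ref{prop-Lienard-nonhyp-per} $\to$ Proposition~\ref{prop-per-blowup-r2R} $\to$ Proposition~\ref{prop-asym-R}/Lemma~\ref{lem-Gamma2} $\to$ (\ref{R2r-asymptotic}) $\to$ integration of $dt/d\phi\in\Theta(\phi^{-n})$ $\to$ substitution into (\ref{cpt-qpolar}), exactly as done in the two ``Asymptotics of $t_{\max}-t$'' paragraphs preceding the statement. Your closing caveats about the sign-changing quasi-trigonometric factors and the subleading discrepancy in the argument of ${\rm Cs}/{\rm Sn}$ are in fact more scrupulous than the paper, which simply absorbs these issues into the $\Theta$-notation and the generic constant $C$ without further comment.
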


\section*{Conclusion}

In this paper, we have studied a universal mechanism of finite-time singularities in dynamical systems generated by ordinary differential equations from the geometric viewpoint.
Our concern contains blow-up solutions, finite-time extinctions, compacton traveling wave solutions and quenching solutions.
Our approach is based on compactifications (in case of blow-up solutions and quenching solutions) and precise descriptions of asymptotic behavior near finite-time singularities.
We have shown that, with the help of asymptotic behavior of trajectories near {\em hyperbolic} singularities, {\em smooth and orbital equivalence} of dynamical systems, asymptotic behavior near finite-time singularities can be described as trajectories on center-stable manifolds of corresponding singularities or invariant sets with rigorous rate of blow-ups, extinctions or quenching.
The approach is shown to work in many examples with the comprehensive mechanism of finite-time singularities depending on the form of vector fields, such as order of polynomials and coefficients.
In particular, in the case of blow-ups, {\em component-wise} rigorous blow-up rates not only with type-I rates but also faster and slower rates than type-I rates can be detected.
We believe that the present study will play a key role in revealing a universal mechanism of finite-time singularities for differential equations including partial differential equations.
\par
\bigskip
We end this paper addressing several further directions from the present study.
One of natural questions will be whether the preceding methodology is available to {\em infinite dimensional} dynamical systems for detecting blow-up rates of blow-up solutions for evolutionary equations including partial differential equations.
To this end, the infinite dimensional analogue of our treatments \cite{EG2006, Mat} including compactifications themselves are necessary.
Even if an infinite dimensional \lq\lq compactification" is developed, dynamics at infinity is intrinsically an infinite dimensional problem. 
For example, consider the nonlinear heat equation $u_t = -Au + f(u) = u_{xx} + u^p$ on $\mathbb{R}$ with some $p>1$.
It is well-known that several blow-up solutions with large initial data are governed by the corresponding ODE $u' = u^p$ by the help of comparison principle. 
In such a case, the diffusion effect works little for blow-up solutions and several spectral properties for the linear operator $A + f'(u)$ can be violated near infinity.
In other words, \lq\lq finite dimensional assumptions" for typical treatments of infinite dimensional dynamical systems (such as center manifolds \cite{C1981}) are not guaranteed in general.
\par
Our blow-up and extinction rates are obtained from asymptotic behavior of trajectories on center manifolds of invariant sets on the horizon for desingularized vector fields.
In the present arguments, we have actually obtained {\em the lowest order asymptotic expansion} of blow-up solutions as well as grow-up solutions, extinction and compactons.
Note that the higher order asymptotic expansion of solutions on center manifolds can be achieved by precise forms of center manifolds as graphs of smooth functions \cite{C1981, CAA1980}.
Using such precise information, there is a possibility that we obtain {\em higher order asymptotic expansions of blow-up solutions} near blow-up times as well as other finite-time singularities.

\section*{Acknowledgements}
The author was partially supported by Program for Promoting the reform of national universities (Kyushu University), Ministry of Education, Culture, Sports, Science and Technology (MEXT), Japan, World Premier International Research Center Initiative (WPI), MEXT, Japan, and JSPS Grant-in-Aid for Young Scientists (B) (No. 17K14235).
He would like also to thank Professors Koichi Anada, Tetsuya Ishiwata and Takeo Ushijima for giving him very essential suggestions to the present study.

\bibliographystyle{plain}
\bibliography{qh_blow_up_3}

\appendix
\section{Proofs of statements}


\subsection{Proof of Proposition \ref{prop-Lienard-nonhyp-per}}
\label{proof-Lienard-nonhyp-per}

It turns out that, for sufficiently small $r>0$, $\dot \theta$ is strictly negative.
We thus introduce $\phi = -\theta$ to obtain
\begin{equation*}
\begin{cases}
\dot r = r {\rm Cs}^n \phi {\rm Sn}^2\phi -r^2 {\rm Cs}^{2n} \phi {\rm Sn}\phi, & \\
\dot \phi = \left(1- {\rm Cs}^{n+1} \phi {\rm Sn}\phi + r{\rm Cs}^{2n+1} \phi \right). &
\end{cases}
\end{equation*}
Here we consider the solution $r$ as a function of $\phi$ followed by the vector field
\begin{align}
\notag
\frac{dr}{d\phi} &= \frac{r {\rm Cs}^n \phi {\rm Sn}^2\phi -r^2 {\rm Cs}^{2n} \phi {\rm Sn}\phi}{\left(1- {\rm Cs}^{n+1} \phi {\rm Sn}\phi \right) + r{\rm Cs}^{2n+1} \phi}\\
\label{dr/d_phi}
	&\equiv \frac{\gamma_1(\phi) r - \gamma_2(\phi)r^2 }{1- {\rm Cs}^{n+1} \phi {\rm Sn}\phi } + f(\phi, r)
\end{align}
with
\begin{equation*}
\gamma_1(\phi) = {\rm Sn}^2\phi {\rm Cs}^n \phi,\quad 
\gamma_2(\phi) = {\rm Sn}\phi {\rm Cs}^{2n} \phi + \frac{\gamma_1(\phi) {\rm Cs}^{2n+1}\phi}{1 - {\rm Sn}\phi {\rm Cs}^{n+1} \phi}
\end{equation*}
and
\begin{equation}
\label{hot-Lienard}
f(\phi, r) = \frac{-\gamma_2(\phi) }{1- {\rm Cs}^{n+1} \phi {\rm Sn}\phi } \sum_{l=1}^\infty \left( \frac{ -{\rm Cs}^{2n+1} \phi}{1- {\rm Cs}^{n+1} \phi {\rm Sn}\phi} \right)^l r^{l+2} = O(r^3)
\end{equation}
We have used the fact that, for sufficiently small $r$, we have the following series expression of the denominator in (\ref{dr/d_phi}) : 
\begin{equation*}
\frac{1}{\left(1- {\rm Cs}^{n+1} \phi {\rm Sn}\phi \right) + r{\rm Cs}^{2n+1} \phi} = 
\frac{1}{1- {\rm Cs}^{n+1} \phi {\rm Sn}\phi } \sum_{l=0}^\infty \left( \frac{ -{\rm Cs}^{2n+1} \phi}{1- {\rm Cs}^{n+1} \phi {\rm Sn}\phi} \right)^l r^l
\end{equation*}
for expressing $\gamma_1(\phi)$ and $\gamma_2(\phi)$.
\par
Following the argument in \cite{DH1999}, we seek the solution $r=r(\phi;r_0)$ of the form
\begin{equation}
\label{sol-r}
r(\phi;r_0) = \beta_1(\phi)r_0 + \beta_2(\phi)r_0^2 + O(r_0^3),\quad r(0;r_0) = r_0
\end{equation}
and the Poincar\'{e} map is defined in a small neighborhood of $r=0$ and is given by $\bar P(r_0) \equiv r(T;r_0)$.
\par
Differentiating (\ref{sol-r}) with respect to $\phi$ and comparing with (\ref{dr/d_phi}), $\beta_1$ and $\beta_2$ are turned out to be solutions of the following differential equations:
\begin{equation}
\begin{cases}
\beta_1'(\phi) = \gamma_1(\phi)\beta_1(\phi) / \left\{ 1 - {\rm Sn}\phi {\rm Cs}^{n+1}\phi \right\} , &\\
\beta_2'(\phi) = \left\{ \gamma_1(\phi)\beta_2(\phi) - \gamma_2(\phi) \beta_1^2(\phi)\right\} / \left\{1 - {\rm Sn}\phi {\rm Cs}^{n+1}\phi \right\}, & \\
\beta_1(0) = 1,\quad \beta_2(0) = 0. &
\end{cases}
\end{equation}
The exact forms of $\beta_1$ and $\beta_2$ are
\begin{equation*}
\begin{cases}
\beta_1(\phi) = e^{\alpha(\phi)}, &\\
\beta_2(\phi) = \displaystyle{
- e^{\alpha(\phi)} \int_0^\phi \frac{e^{\alpha(\psi)}\gamma_2(\psi) d\psi}{1 - {\rm Sn}\psi {\rm Cs}^{n+1}\psi}
} &\\
\end{cases}
\end{equation*}
with
\begin{equation*}
\alpha(\phi) = \int_0^\phi \frac{\gamma_1(\psi) d\psi}{1 - {\rm Sn}\psi {\rm Cs}^{n+1}\psi}.
\end{equation*}
Now we study the stability of the periodic orbit at infinity $\{r=0\}$, which follows from the exact form of $P(r_0)$.
First observe that both $\gamma_1(\phi)$ and $\gamma_2(\phi)$ are $T$-periodic and
\begin{align*}
&\gamma_1(-\phi) =  \gamma_1(\phi),\quad \gamma_1(T/2-\phi) = -\gamma_1(\phi),\quad \gamma_1(T/2+\phi) = -\gamma_1(\phi),\\
&\gamma_2(-\phi) = -{\rm Sn}\phi {\rm Cs}^{2n} \phi + \frac{\gamma_1(\phi) {\rm Cs}^{2n+1}\phi}{1 + {\rm Sn}\phi {\rm Cs}^{n+1} \phi},\\
&\gamma_2(T/2 - \phi) = {\rm Sn}\phi {\rm Cs}^{2n} \phi + \frac{\gamma_1(\phi) {\rm Cs}^{2n+1}\phi}{1 - {\rm Sn}\phi {\rm Cs}^{n+1} \phi} = \gamma_2(\phi),\\
&\gamma_2(T/2 + \phi) = -{\rm Sn}\phi {\rm Cs}^{2n} \phi + \frac{\gamma_1(\phi) {\rm Cs}^{2n+1}\phi}{1 + {\rm Sn}\phi {\rm Cs}^{n+1} \phi} = \gamma_2(-\phi)
\end{align*}
hold. Note that $n$ is now assumed to be odd.
Using these facts, we have
\begin{align*}
\alpha(T) &= -\int_0^T \cdots d\psi = -\left( \int_0^{T/4} +  \int_{T/4}^{T/2} +  \int_{T/2}^{3T/4} +  \int_{3T/4}^T \right)\cdots d\psi,\\
\int_{3T/4}^T \frac{\gamma_1(\psi) d\psi}{1 - {\rm Sn}\psi {\rm Cs}^{n+1}\psi} &=  \int_{0}^{T/4} \frac{\gamma_1(\mu) d\mu}{1 + {\rm Sn}\mu {\rm Cs}^{n+1}\mu}\quad \text{(via $\psi = -\mu + T$)},\\
\int_{T/4}^{T/2} \frac{\gamma_1(\psi) d\psi}{1 - {\rm Sn}\psi {\rm Cs}^{n+1}\psi} &=  -\int_{0}^{T/4} \frac{\gamma_1(\mu) d\mu}{1 - {\rm Sn}\mu {\rm Cs}^{n+1}\mu}\quad \text{(via $\psi = -\mu + T/2$)},\\
\int_{T/2}^{3T/4} \frac{\gamma_1(\psi) d\psi}{1 - {\rm Sn}\psi {\rm Cs}^{n+1}\psi} &=  -\int_{0}^{T/4} \frac{\gamma_1(\mu) d\mu}{1 + {\rm Sn}\mu {\rm Cs}^{n+1}\mu}\quad \text{(via $\psi = \mu + T/2$)}.
\end{align*}
Summarizing these equalities, we have $\alpha(T) = 0$.
In particular, we have $\beta_1(T) = 1$ and hence the periodic orbit at infinity is non-hyperbolic.
\par
\bigskip
Similarly, we calculate $\beta_2(T)$.
Letting 
\begin{align}
\label{Gamma_2}
\Gamma_2(\phi) &= \gamma_2(\phi) / (1-{\rm Sn}\phi {\rm Cs}^{n+1}\phi) \equiv \Gamma_{21}(\phi) + \Gamma_{22}(\phi),\\
\notag
\Gamma_{21}(\phi) &= \frac{ {\rm Sn}\phi {\rm Cs}^{2n} \phi } {1 - {\rm Sn}\phi {\rm Cs}^{n+1} \phi},\quad  
\Gamma_{22}(\phi) = \frac{\gamma_1(\phi) {\rm Cs}^{2n+1}\phi}{(1 - {\rm Sn}\phi {\rm Cs}^{n+1} \phi)^2},
\end{align}
we study the behavior of $\Gamma_2$.
Since $\beta_2(\phi) = -e^{\alpha(\phi)}\int_0^\phi e^{\alpha(\psi)}\Gamma_2(\psi) d\psi$, the behavior of $\beta_2(\phi)$ is dominated by that of $\Gamma_2$.
Instead we calculate
\begin{equation*}
\alpha_{2j}(T) = \int_0^T \Gamma_{2j}(\phi)d\phi, \quad j=1,2.
\end{equation*}
First, we have
\begin{align*}
\alpha_{21}(T) &= \int_0^T \Gamma_{21}(\phi)d\phi = \left( \int_0^{T/4} + \int_{T/4}^{T/2} + \int_{T/2}^{3T/4} + \int_{3T/4}^T \right)\Gamma_{21}(\phi)d\phi,\\
\int_{3T/4}^T \Gamma_{21}(\phi)d\phi &= -\int_0^{T/4} \frac{ {\rm Sn}\mu {\rm Cs}^{2n} \mu } {1 + {\rm Sn}\mu {\rm Cs}^{n+1} \mu}d\mu\quad \text{(via $\phi = -\mu + T$)},\\
\int_{T/4}^{T/2} \Gamma_{21}(\phi)d\phi &= \int_0^{T/4} \frac{ {\rm Sn}\mu {\rm Cs}^{2n} \mu } {1 - {\rm Sn}\mu {\rm Cs}^{n+1} \mu}d\mu\quad \text{(via $\phi = -\mu + T/2$)},\\
\int_{T/2}^{3T/4} \Gamma_{21}(\phi)d\phi &= -\int_0^{T/4} \frac{ {\rm Sn}\mu {\rm Cs}^{2n} \mu } {1 + {\rm Sn}\mu {\rm Cs}^{n+1} \mu}d\mu \quad \text{(via $\phi = \mu + T/2$)}.
\end{align*}
Therefore we have
\begin{equation*}
\alpha_{21}(T) = 2\int_0^{T/4}{\rm Sn}\mu {\rm Cs}^{2n} \mu \left( \frac{1} {1 - {\rm Sn}\mu {\rm Cs}^{n+1} \mu} - \frac{1} {1 + {\rm Sn}\mu {\rm Cs}^{n+1} \mu} \right)d\mu > 0.
\end{equation*}
In particular, the integrand is always negative for $\mu \in (0,T/4)$, since ${\rm Sn}\mu$ and ${\rm Cs}\mu$ are always positive and ${\rm Sn}\mu{\rm Cs}^{n+1}\mu < 1$.
\par
Similarly, we have
\begin{align*}
\alpha_{22}(T) &= \int_0^T \Gamma_{22}(\phi)d\phi = \left( \int_0^{T/4} + \int_{T/4}^{T/2} + \int_{T/2}^{3T/4} + \int_{3T/4}^T \right)\Gamma_{22}(\phi)d\phi,\\
\int_0^{T/4} \Gamma_{22}(\phi)d\phi &= \int_0^{T/4} \frac{ \gamma_1(\mu) {\rm Cs}^{2n+1} \mu } {(1 - {\rm Sn}\mu {\rm Cs}^{n+1} \mu)^2} d\mu,\\
\int_{3T/4}^T \Gamma_{22}(\phi)d\phi &= \int_0^{T/4} \frac{ \gamma_1(\mu) {\rm Cs}^{2n+1} \mu } {(1 + {\rm Sn}\mu {\rm Cs}^{n+1} \mu )^2}d\mu \quad \text{(via $\phi = -\mu + T$)},\\
\int_{T/4}^{T/2} \Gamma_{22}(\phi)d\phi &= \int_0^{T/4} \frac{ \gamma_1(\mu) {\rm Cs}^{2n+1} \mu } {(1 - {\rm Sn}\mu {\rm Cs}^{n+1}\mu)^2}d\mu \quad \text{(via $\phi = -\mu + T/2$)},\\
\int_{T/2}^{3T/4} \Gamma_{22}(\phi)d\phi &= \int_0^{T/4} \frac{ \gamma_1(\mu) {\rm Cs}^{2n+1} \mu } {(1 + {\rm Sn}\mu {\rm Cs}^{n+1} \mu)^2}d\mu \quad \text{(via $\phi = \mu + T/2$)}.
\end{align*}
Therefore we have
\begin{equation*}
\alpha_{22}(T) = 2\int_0^{T/4}{\rm Sn}\phi {\rm Cs}^{2n} \mu \left( \frac{1} {(1 - {\rm Sn}\mu {\rm Cs}^{n+1} \mu)^2} + \frac{1} {(1 + {\rm Sn}\mu {\rm Cs}^{n+1} \mu)^2} \right)d\mu > 0.
\end{equation*}
In particular, the integrand is always positive for $\mu \in (0,T/4)$.
\par
Summarizing the arguments, we have
\begin{align*}
\beta_2(T) &= \displaystyle{
- \int_0^T \frac{e^{\alpha(\psi)}\gamma_2(\psi) d\psi}{1 - {\rm Sn}\psi {\rm Cs}^{n+1}\psi}
} \\
	&=  -\left( \int_0^{T/4}+ \int_{T/4}^{T/2}+ \int_{T/2}^{3T/4} + \int_{3T/4}^T\right) e^{\alpha(\psi)}(\Gamma_{21}(\psi) + \Gamma_{22}(\psi))d\psi < 0
\end{align*}
since $e^{\alpha(\psi)}$ is always positive.
This inequality indicates that, for sufficiently small $r_0 > 0$, 
\begin{equation*}
0 < \beta_1(T)r_0 + \beta_2(T)r_0^2 + O(r_0^3) = (\beta_1(T) + \beta_2(T)r_0)r_0 + O(r_0^3) < r_0,
\end{equation*}
which shows that the periodic trajectory $\{r=0\}$ is attracting.

\subsection{Proof of Proposition \ref{prop-per-blowup-r2R}}
\label{proof-per-blowup-r2R}

Our approach here is to define a {\em smooth and locally positive} transform
\begin{equation}
\label{transform}
R = h(r,\phi) \equiv a_1(\phi) r + a_2(\phi) r^2
\end{equation}
such that $R$ is dominated by a vector field of the form (\ref{simpler-R}).
Substituting (\ref{transform}) into (\ref{dr/d_phi}), we have
\begin{align*}
\frac{dR}{d\phi} &= \frac{\partial h}{\partial r}(r,\phi)\frac{dr}{d\phi} + \frac{\partial R}{\partial \phi}\\
	&= \left( a_1(\phi) + 2a_2(\phi)r \right) \cdot \left\{ \frac{\gamma_1(\phi) r - \gamma_2(\phi)r^2 }{1- {\rm Cs}^{n+1} \phi {\rm Sn}\phi  } + f(\phi, r) \right\} + \left(\frac{d a_1}{d \phi}(\phi) r + \frac{d a_2}{d \phi}(\phi) r^2\right)
\end{align*}
with $f(\phi, r)$ in (\ref{hot-Lienard}). 
If the transform $h$ yield the ansatz (\ref{simpler-R}) for $R=h(r,\phi)$, the following equations must be satisfied:
\begin{align*}
&\frac{\gamma_1(\phi) a_1(\phi)}{1-{\rm Cs}^{n+1}\phi {\rm Sn}\phi} + \frac{d a_1}{d \phi}(\phi) = 0,\\
&\frac{2\gamma_1(\phi) a_2(\phi) - \gamma_2(\phi)a_1(\phi)}{1-{\rm Cs}^{n+1}\phi {\rm Sn}\phi} + \frac{d a_2}{d \phi}(\phi) = -\frac{ \gamma_2(\phi) a_1(\phi)^2 }{1- {\rm Cs}^{n+1} \phi {\rm Sn}\phi}
\end{align*}
with $a_1(0) = 1, a_2(0) = 0$.
The function $a_1(\phi)$ is explicitly written by
\begin{equation*}
a_1(\phi) = e^{\tilde \alpha(\phi)},\quad \tilde \alpha(\phi) = -\int_0^\phi \frac{\gamma_1(\psi) d\psi}{1 - {\rm Sn}\psi {\rm Cs}^{n+1}\psi},
\end{equation*}
which indicates that $a_1(\phi)$ is positive.
Similarly $a_2$ is calculated as
\begin{align*}
a_2(\phi) &= e^{2\tilde \alpha(\phi)} \int_0^\phi \frac{(e^{-\tilde \alpha(\psi)}-1)\gamma_2(\psi) d\psi}{1 - {\rm Sn}\psi {\rm Cs}^{n+1}\psi}.
%
\end{align*}

Note that
\begin{equation*}
\tilde \alpha(\theta) < 0 \text{ if }\theta \in (0,T/2),\quad \tilde \alpha(\theta) > 0 \text{ if }\theta \in (T/2, T),
\end{equation*}
equivalently
\begin{equation*}
(e^{-\tilde \alpha(\psi)}-1)< 0 \text{ if }\theta \in (0,T/2),\quad (e^{-\tilde \alpha(\psi)}-1)> 0 \text{ if }\theta \in (T/2, T).
\end{equation*}
We then consider the property of $a_2(\phi)$.
It is represented as
\begin{equation*}
a_2(\phi) = e^{2\tilde \alpha(\phi)} \int_0^\phi \frac{(e^{-\tilde \alpha(\psi)}-1) d\psi}{1 - {\rm Sn}\psi {\rm Cs}^{n+1}\psi}\cdot \left( {\rm Sn}\psi {\rm Cs}^{2n} \psi + \frac{\gamma_1(\psi) {\rm Cs}^{2n+1}\psi}{1 - {\rm Sn}\psi {\rm Cs}^{n+1} \psi} \right).
\end{equation*}
The denominator $1 - {\rm Sn}\psi {\rm Cs}^{n+1} \psi$ is always positive.
Furthermore,
\begin{align*}
\left( {\rm Sn}\psi {\rm Cs}^{2n} \psi + \frac{\gamma_1(\psi) {\rm Cs}^{2n+1}\psi}{1 - {\rm Sn}\psi {\rm Cs}^{n+1} \psi} \right) 
&=  \frac{{\rm Sn}\psi {\rm Cs}^{2n} \psi (1 - {\rm Sn}\psi {\rm Cs}^{n+1} \psi ) + {\rm Sn}^2\psi {\rm Cs}^n \psi  {\rm Cs}^{2n+1}\psi}{1 - {\rm Sn}\psi {\rm Cs}^{n+1} \psi} \\
&=  \frac{{\rm Sn}\psi {\rm Cs}^{2n} \psi }{1 - {\rm Sn}\psi {\rm Cs}^{n+1} \psi}.
\end{align*}

The function ${\rm Sn}\psi {\rm Cs}^{2n}\psi$ is positive for $\psi\in (0,T/2)$ and negative for $\psi\in (T/2, T)$, which are completely equal to the sign of $(e^{-\tilde \alpha(\phi)}-1)$.
\par
\bigskip
This concludes that
\begin{equation*}
\int_0^\phi \frac{(e^{-\tilde \alpha(\psi)}-1) {\rm Sn}\psi {\rm Cs}^{2n} \psi  d\psi}{ (1 - {\rm Sn}\psi {\rm Cs}^{n+1}\psi )^2} \geq 0, \quad \forall \phi \in [0,T).
\end{equation*}
We thus conclude that $a_2(\phi)$ is always nonnegative for $\psi \geq 0$.
If we define $R = h(r,\phi)$ as
\begin{align}
\notag
h(r,\phi) &\equiv e^{\tilde \alpha(\phi)}r + \left( e^{2\tilde \alpha(\phi)} \int_0^\phi \frac{(e^{-\tilde \alpha(\psi)}-1)\gamma_2(\psi) d\psi}{1 - {\rm Sn}\psi {\rm Cs}^{n+1}\psi} \right) r^2,\\
\tilde \alpha(\phi) &= -\int_0^\phi \frac{\gamma_1(\psi) d\psi}{1 - {\rm Sn}\psi {\rm Cs}^{n+1}\psi},
\end{align}
the vector field (\ref{dr/d_phi}) is transformed into
\begin{equation*}
\frac{dr}{d\phi} =\frac{- \gamma_2(\phi)R^2 }{1- {\rm Cs}^{n+1} \phi {\rm Sn}\phi } + O(R^3).
\end{equation*}
%

Since the exponential function is always positive, and since $\tilde \alpha_i$ are analytic in $\phi$,
then it gives a positive and smooth equivalence between (\ref{dr/d_phi}) and (\ref{simpler-R}).

\subsection{Proof of Proposition \ref{prop-asym-R}}
\label{proof-asym-R}
First note that both $R^{\rm original}(\phi)$ and $R^{\rm pri}(\phi)$ converge to $0$ as $\phi\to \infty$ for sufficiently small initial data $R_0 > 0$.
Then the L'H$\hat {\rm o}$pital's rule shows
\begin{align*}
-1 &= \lim_{\phi\to \infty} \frac{(dR^{\rm original}/d\phi)}{\frac{\gamma_2(\phi)(R^{\rm original})^2 }{1- {\rm Cs}^{n+1} \phi {\rm Sn}\phi } } 
= \lim_{\phi\to \infty} \frac{dR^{\rm original}}{(R^{\rm original})^2} 
\left(\frac{d\phi}{ \frac{\gamma_2(\phi)}{1- {\rm Cs}^{n+1} \phi {\rm Sn}\phi} } \right)^{-1}\\
&= \lim_{\phi \to \infty} \int_{R_0}^{R^{\rm original}(\phi)} r^{-2}dr \left(\int_0^\phi \Gamma_2(\psi) d\psi\right)^{-1}.
\end{align*}
Since
\begin{equation*}
-\int_0^\phi \Gamma_2(\psi) d\psi = \int_{R_0}^{R^{\rm pri}(\phi)} r^{-2} dr \equiv (R^{\rm pri})^{-1}(R^{\rm pri}(\phi); R_0),
\end{equation*}
we have
\begin{equation*}
(R^{\rm pri})^{-1}(R^{\rm original}(\phi); R_0) = \int_0^{\phi} \Gamma_2(\psi) d\psi\quad \text{ as }\phi\to \infty,
\end{equation*}
equivalently
\begin{align*}
R^{\rm original}(\phi) &= R^{\rm pri}(\int_0^{\phi} \Gamma_2(\psi) d\psi (1+o(1)))\\
	&= R^{\rm pri}(C_3\phi(1+o(1)))\text{ as }\phi\to \infty,
\end{align*}
where we have used the asymptotic behavior $\int_0^{\phi} \Gamma_2(\psi) d\psi \sim C_3 \phi$ as $\phi\to \infty$ stated in Lemma \ref{lem-Gamma2}.



\end{document}